\documentclass[a4paper,10pt]{article}
\usepackage{graphicx, color}
\usepackage{amsmath,amssymb,amsthm,comment,cite}
\usepackage{enumitem}
\usepackage[colorlinks=true,linkcolor=blue,citecolor=blue,urlcolor=blue]{hyperref}

\newtheorem{cor}{Corollary}
\newtheorem{obs}{Observation}
\newtheorem{prop}{Proposition}

\newtheorem{lem}{Lemma}
\newtheorem{rem}{Remark}

\newtheorem{quest}{Question}
\newtheorem{conj}{Conjecture}

\theoremstyle{definition}
\newtheorem{defi}{Definition}[section]

\usepackage[noabbrev,capitalize,nameinlink]{cleveref}

\crefname{prop}{Proposition}{propositions}
\crefname{prob}{Problem}{problems}
\crefname{lem}{Lemma}{lemmas}
\crefname{rem}{Remark}{remarks}
\crefname{tab}{Table}{tables}
\crefname{quest}{Question}{questions}
\crefname{conj}{Conjecture}{conjectures}
\crefname{defi}{Definition}{definitions}
\crefname{claim}{Claim}{claims}
\Crefname{thm}{Theorem}{Theorems}
\Crefname{cor}{Corollary}{Corollaries}
 \Crefname{obs}{Observation}{Observations}

\usepackage{thm-restate}

\title{Merino--Welsh inequalities for matroids with controlled lattices of cyclic flats}
\author{Kolja Knauer\thanks{Departament de Matem\'{a}tiques i Inform\'{a}tica, Universitat de Barcelona, Centre de Recerca Matemàtica, Barcelona, Spain.} \and Leonardo Martínez-Sandoval\thanks{Facultad de Ciencias, Universidad Nacional Autónoma de México, Ciudad de México, México.} \and Criel Merino\thanks{Instituto de Matemáticas, Universidad Nacional Autónoma de México, Ciudad de México, México.}}
\date{}

\begin{document}

\maketitle

\begin{abstract}
Beke, Cs\'aji, Csikv\'ari, and Pituk showed that the Merino--Welsh quotient $\Phi(M)=T_M(1,1)^2/(T_M(2,0)T_M(0,2))$ can be arbitrarily large, so the multiplicative Merino--Welsh inequality fails for matroids in general. We show that $\Phi$ is uniformly bounded on the class of matroids whose cyclic-flat lattice avoids any fixed finite poset $P$ as an induced subposet. We further prove $\Phi(M)\leq1$ for matroids of cyclic width at most $7$, cyclic height at most $6$, and for loop- and coloop-free $4$-paving or $4$-copaving matroids.
\end{abstract}

\medskip
\noindent\textbf{2020 Mathematics Subject Classification.} Primary 05B35; Secondary 05C31, 06A07.

\smallskip
\noindent\textbf{Keywords.} Tutte polynomial, Merino--Welsh problem, matroid, cyclic flat, cyclic width, cyclic height, paving matroid.

\section{Introduction}

 For a graph $G$ or a matroid $M$, we denote its Tutte polynomial by $T_G$ or $T_M$, respectively. We assume familiarity with the theory of Tutte polynomials as presented in~\cite[Chapter 6]{white1992matroid}.
\subsection{State of the art}

Historically, Merino and Welsh explicitly posed the first of the following inequalities as Conjecture~7.1 in~\cite{MW99}. Conde and Merino later explicitly posed the multiplicative and additive strengthenings as Conjectures~2.2 and~2.3 in~\cite{CM09}.

\begin{conj}[Graphic Merino--Welsh inequalities]\label{prob:graphic-mw}
Let $G$ be a connected graph without loops or bridges. Then each of the following inequalities is true.
\begin{itemize}
\item[] \begin{equation}\label{eq:intro-mw-original}
\max\{T_G(2,0),T_G(0,2)\}\ge T_G(1,1).
\end{equation}
\item[] \begin{equation}\label{eq:intro-mw-additive}
T_G(2,0)+T_G(0,2)\ge 2T_G(1,1).
\end{equation}
\item[] \begin{equation}\label{eq:intro-mw-multiplicative}
T_G(2,0)T_G(0,2)\ge T_G(1,1)^2.
\end{equation}
\end{itemize}
\end{conj}
The multiplicative inequality implies the additive one, which in turn implies~\eqref{eq:intro-mw-original}. We refer to~\eqref{eq:intro-mw-multiplicative} as the \emph{multiplicative Merino--Welsh inequality}.


The same inequalities extend naturally from graphs to the Tutte polynomial of a loop- and coloop-free matroid $M$. The matroidal maximum inequality had already been conjectured in~\cite{MRR09} for matroids that contain two disjoint bases or whose ground set is the union of two bases; the three unrestricted matroidal versions were explicitly formulated as Conjecture~1.2 in~\cite{KMR18}.
These matroidal inequalities were subsequently verified on a number of structurally different classes. For lattice path matroids, Knauer, Mart\'inez-Sandoval and Ram\'irez Alfons\'in proved a strengthening of the multiplicative inequality: apart from direct sums of parallel pairs, the right-hand side can be multiplied by $4/3$~\cite{KMR18}. Ch\'avez-Lomel\'i, Merino, Noble, and Ram\'irez-Ib\'a\~nez proved a convexity statement for the Tutte polynomial of coloopless paving matroids, implying the additive inequality~\cite{CMNR11}. Ferroni and Schr\"oter later proved the multiplicative inequality for all split matroids, a class strictly containing the paving and copaving matroids~\cite{FS23}.

The situation changed decisively with the counterexamples of Beke, Cs\'aji, Csikv\'ari, and Pituk~\cite{BCCP24false}. They showed that all three matroidal versions can fail. Moreover, setting 

\begin{equation}\label{eq:intro-phi}
\Phi(M)=\frac{T_M(1,1)^2}{T_M(2,0)T_M(0,2)}.
\end{equation}
 they showed $\Phi(M)$ is not bounded in general. Their examples are obtained by replacing every element of suitable uniform matroids by a parallel pair. Thus, for matroids, the question is no longer whether the Merino--Welsh inequalities hold universally, but rather what remains true under natural structural assumptions.

After the counterexample, there are at least two ways to weaken the universal statement: one can move the evaluation points away from $(2,0)$ and $(0,2)$, or one can restrict the matroid class. The first direction goes back to Jackson's universal inequality $T_M(3,0)T_M(0,3)\ge T_M(1,1)^2$~\cite{Jac10}. Beke, Cs\'aji, Csikv\'ari, and Pituk lowered the constant $3$ to $2.9243$~\cite{BCCP24perm}, and Csikv\'ari recently lowered it further to $2.355$~\cite{Csi26}. In a complementary structural direction, Kung obtained sufficient density and cocircuit-size conditions for the additive inequality~\cite{Kun25}.

\subsection{Our contributions}

Our paper asks which structural restrictions on a matroid class prevent $\Phi(M)$ from becoming arbitrarily large. Hence, for a class $\mathcal M$ of matroids, we want to bound the parameter
\begin{equation}\label{eq:intro-PHI}
\Phi(\mathcal M)=\sup\{\Phi(M)\mid M\in\mathcal M\text{ is loop- and coloop-free}\}.
\end{equation}

The first main tool we develop  is an extremal reduction for minor-closed classes $\mathcal{M}$. Starting with any loop- and coloop-free matroid, deletion--contraction either produces a smaller minor with at least as large a value of~$\Phi$, or forces every element into a nontrivial parallel or coparallel class. We then show that classes of size at least three can also be reduced without decreasing $\Phi$. Thus, when maximizing $\Phi$ over a minor-closed class, it is enough to consider \emph{$2$-inflated matroids}: every element belongs to exactly one maximal nontrivial parallel or coparallel class, and every such class has size two. This is made precise in the following lemma.

\begin{restatable}{lem}{basislem}\label{lem:inductionbasis}
If $\mathcal M$ is a minor-closed class of matroids, then
\[
\Phi(\mathcal M)=\sup\{\Phi(M)\mid M\in\mathcal M\text{ is $2$-inflated}\}.
\]
\end{restatable}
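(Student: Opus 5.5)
The plan is to take any loop- and coloop-free $M\in\mathcal M$ and produce a loop- and coloop-free minor $M'\in\mathcal M$ with $\Phi(M')\ge\Phi(M)$, either smaller or $2$-inflated. Induction on $|E(M)|$ then gives $\Phi(\mathcal M)\le\sup\{\Phi(M)\mid M\in\mathcal M\text{ is $2$-inflated}\}$. The reverse inequality is trivial, since $2$-inflated matroids are loop- and coloop-free.

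\textbf{Step 1 (generic deletion--contraction).} Let $e$ be neither a loop nor a coloop. For each evaluation point we have $T_M=T_{M\setminus e}+T_{M/e}$. Write $a=T_{M\setminus e}(1,1)$, $A=T_{M\setminus e}(2,0)$, $C=T_{M\setminus e}(0,2)$, and $b,B,D$ for the same evaluations of $M/e$. Put $\lambda=\max\{\Phi(M\setminus e),\Phi(M/e)\}$, so that $a\le\sqrt{\lambda AC}$ and $b\le\sqrt{\lambda BD}$. By Cauchy--Schwarz,
\[
(a+b)^2\le\lambda\bigl(\sqrt{AC}+\sqrt{BD}\bigr)^2\le\lambda(A+B)(C+D),
\]
hence $\Phi(M)\le\lambda$. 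This requires both minors to be loop- and coloop-free, so that their $T(2,0)$ and $T(0,2)$ are positive. $M\setminus e$ has no loops, and it has a coloop exactly when $e$ lies in a cocircuit of size two. Dually, $M/e$ has no coloops, and it has a loop exactly when $e$ lies in a circuit of size two. So if some $e$ lies in no nontrivial parallel or coparallel class, we pass to a smaller minor. Otherwise every element lies in such a class. An element cannot lie in both a nontrivial parallel class and a different nontrivial coparallel class, because a circuit and a cocircuit cannot meet in exactly one element. If $e$ lies in both, the two classes coincide and form a $U_{1,2}$ component. So every element lies in exactly one maximal nontrivial class.

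\textbf{Step 2 (shrinking classes of size $k\ge3$).} By duality, since $\Phi(M^*)=\Phi(M)$ and the dual class is then treated, it suffices to handle a parallel class $P$ with $|P|=k\ge3$. Let $N_j$ be $M$ with $P$ reduced to $j$ parallel elements, and let $N=N_1$ with representative $e$. The standard formula is
\[
T_{N_j}=T_{N\setminus e}+(1+y+\dots+y^{j-1})\,T_{N/e}.
\]
At the three evaluation points this gives $a+jb$, $A+B$, and $C+(2^j-1)D$ respectively, with $a,A,C,b,B,D$ now the evaluations of $N\setminus e$ and $N/e$. So $\Phi(N_j)=f(j)$ for the explicit function
\[
f(j)=\frac{(a+jb)^2}{(A+B)(C+(2^j-1)D)}.
\]
I would show that $f(k)\le\max\{f(k-1),f(2)\}$ for $k\ge3$, or better that $f$ is non-increasing on $j\ge2$ once $f(2)\ge f(1)$. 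The idea is that the numerator grows quadratically in $j$ while the denominator grows like $2^j$. Concretely, I would look at the ratio $f(j+1)/f(j)$: it equals $\bigl((a+(j+1)b)/(a+jb)\bigr)^2$ times $(C+(2^j-1)D)/(C+(2^{j+1}-1)D)$. I would show that once this ratio is $\le1$ it stays $\le1$, i.e.\ $f$ is unimodal. Unimodality means its maximum over $j\ge1$ is attained at some $j\le2$ unless the parameters are extreme. If needed, I would combine this with Step~1 applied to $N_1$, where $e$ no longer lies in a parallel class, to handle the case in which the maximum sits at $j=1$. Each $N_j$ with $j<k$ is a deletion minor, so it stays in $\mathcal M$. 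Moreover $N_j$ is loop- and coloop-free for $j\ge2$, because the elements of $P$ are not coloops. Iterating over all classes yields a $2$-inflated minor with $\Phi$ at least $\Phi(M)$.

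\textbf{Main obstacle.} The delicate point is the unimodality/monotonicity of $f$ in $j$, namely controlling the case where $b$ is large relative to $a$. There I would first try to use the inequality $T_{N/e}(1,1)\le T_{N\setminus e}(1,1)$, or a comparable bound that holds because $e$ is not a coloop of $N$. I would also try exploiting that $\sqrt{f}$ is linear over square root of an affine-in-$2^j$ quantity, so that a single crossing argument gives unimodality. If unimodality fails in general, the fallback is to show $f(k)\le\max\{f(k-1),f(k-2)\}$ directly, which still suffices by induction down to $j=2$.
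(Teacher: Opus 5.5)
Your Step 1 is correct and is the paper's first reduction, though you should say explicitly that a $U_{1,2}$ component is split off, using $\Phi(U_{1,2})=1$ and multiplicativity. The gap is in Step 2. You want a parallel class of size $k\ge3$ to shrink within the family $N_j$ without lowering $\Phi$, i.e.\ $\Phi(N_k)\le\max_{j<k}\Phi(N_j)$. This is false for $k=3$. So none of your variants can be proved: monotonicity from $j=2$ on, $f(k)\le\max\{f(k-1),f(2)\}$, and the fallback $f(k)\le\max\{f(k-1),f(k-2)\}$ all fail.

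The reason is that $\bigl((a+3b)/(a+2b)\bigr)^2$ tends to $9/4$ when $a\ll b$, while $(C+7D)/(C+3D)$ equals $2$ when $C=D$. Concretely, let $N_j$ be the graphic matroid of two vertices $u,v$ joined by $j$ parallel edges and by three internally disjoint $u$--$v$ paths of length $12$.
\begin{itemize}
\item $N\setminus e$ is a theta graph with $T(1,1)=432$ and $T(0,2)=6$.
\item $N/e$ is a bouquet of three $12$-cycles with $T(1,1)=1728$ and $T(0,2)=8$.
\end{itemize}
Hence $T_{N_j}(1,1)=432(1+4j)$ and $T_{N_j}(0,2)=2^{j+3}-2$, while $T_{N_j}(2,0)$ does not depend on $j$. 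Therefore $\Phi(N_3)/\Phi(N_2)=(13/9)^2\cdot 30/62=845/837>1$ and $\Phi(N_3)/\Phi(N_1)=(13/5)^2\cdot 14/62=1183/775>1$. Yet $N_3$ is inflated: the triple edge is a parallel class that is not a cocircuit, and each path is a series class. Your argument treats an arbitrary class and gives no rule for choosing a shrinkable one. The same example also refutes the inequality $T_{N/e}(1,1)\le T_{N\setminus e}(1,1)$ that you hoped to use, since $1728>432$.

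The missing idea is to compare $\Phi(N_k)$ not with the other $N_j$, but directly with the two minors $N\setminus e=M\setminus X$ and $N/e=M/X$, as the paper does. If $C>0$, both minors are loop- and coloop-free. With $\lambda=\max\{\Phi(N\setminus e),\Phi(N/e)\}$ it then suffices to show $(\sqrt{AC}+k\sqrt{BD})^2\le(A+B)(C+(2^k-1)D)$.
\begin{itemize}
\item Plain Cauchy--Schwarz gives this only when $k^2\le2^k-1$, i.e.\ $k\ge5$.
\item For $k=3,4$ you also need $A\ge B$ and $D\ge C$. These follow from the activity expansion, $T_N(2,0)\ge2T_{N/e}(2,0)$ and $T_N(0,2)\ge2T_{N\setminus e}(0,2)$, since $e$ is neither a loop nor a coloop of $N$.
\item With $x=\sqrt{A/B}\ge1$ and $y=\sqrt{D/C}\ge1$, the claim becomes $(x+ky)^2\le(x^2+1)(1+(2^k-1)y^2)$. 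This is tight at $k=3$, $x=y=1$.
\end{itemize}
Only when $C=0$, so that $N\setminus e$ has a coloop and cannot be used, does comparison with $N_2$ work. Then $f(k)/f(2)\le 3k^2/(4(2^k-1))<1$.

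Finally, your formula for $T_{N_j}$ requires $X$ not to be a cocircuit. The remaining case, $M=M_0\oplus U_{1,k}$, is handled by replacing $U_{1,k}$ with $U_{1,2}$.
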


In view of both the equality cases for lattice path matroids~\cite{KMR18} and the $2$-thickening used in the known counterexamples~\cite{BCCP24false}, this gives a structural explanation for the recurring role of parallel and coparallel pairs. In \cref{sec:core} we associate to a $2$-inflated matroid a smaller \emph{decorated core} and two corresponding \emph{boundary matroids}, and obtain bounds for its Merino--Welsh quotient in terms of these boundary matroids. 

The main focus of the present paper is on the Merino-Welsh inequalities in relation to the lattice of cyclic flats. One reason to use this object is that the Tutte polynomial of a matroid is determined by its \emph{configuration}: the abstract lattice $\mathcal Z(M)$ of cyclic flats together with the cardinality and rank of each cyclic flat~\cite{Ebe14}. Bonin and de Mier developed the structure theory of $\mathcal Z(M)$ systematically~\cite{BM08}. 

If $P$ is a finite poset, let $\mathcal C(P)$ denote the class of matroids for which $\mathcal Z(M)$ contains no induced subposet isomorphic to $P$. This  formulation is useful because the classes $\mathcal C(P)$ are minor-closed, see~\cref{lem:C(X)}. As a consequence of our work on $2$-inflated matroids and their boundaries, we obtain our main structural theorem in terms of the \emph{$2$-dimension} $\dim_2(P)$, i.e., the least $d$ for which $P$ is an induced subposet of the Boolean lattice $\mathcal B_d$; see~\cite{Tro75,Tro92}.

\begin{restatable}{thm}{boundedposet}\label{thm:bounded-poset-quotient}  
For every finite poset $P$ for $d:=\dim_2(P)$ it holds that:
\[
\Phi(\mathcal C(P))\leq 2^{4d-4}.
\]
\end{restatable}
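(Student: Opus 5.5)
By \cref{lem:C(X)} the class $\mathcal C(P)$ is minor-closed, so \cref{lem:inductionbasis} reduces the statement to $2$-inflated matroids $M\in\mathcal C(P)$. Fix such an $M$. Its ground set splits into parallel pairs $p_1,\dots,p_a$ and coparallel pairs $q_1,\dots,q_b$. Let $N$ be the decorated core obtained by collapsing each pair to a single element, remembering which elements came from parallel pairs and which from coparallel pairs. The first step is to use the bounds from \cref{sec:core}, which express $\Phi(M)$ in terms of the two boundary matroids. Roughly, $T_M(2,0)$ and $T_M(0,2)$ expand as weighted sums over subsets of pairs, with weights $2^{\pm}$ per pair. $T_M(1,1)^2$ is bounded by a product of the corresponding counts on the boundary matroids.

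The key structural input I would aim for is the following. If $\mathcal Z(M)$ has no induced copy of $P$, then, since $P$ embeds in $\mathcal B_d$, $\mathcal Z(M)$ contains no induced Boolean lattice $\mathcal B_d$. Each parallel pair is a cyclic flat of rank $1$. Dually, each coparallel pair yields a cyclic flat whose complement is a rank-$1$ cyclic flat of $M^*$. I would show that $k$ parallel pairs that are \emph{independent} in the core span an induced $\mathcal B_k$ in $\mathcal Z(M)$, via the joins $\mathrm{cl}(\bigcup_{i\in S}p_i)$. So in a large family, such pairs must generate dependences. I would then use this to bound how many of the pairs can act independently in the boundary matroids. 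The intended outcome is that the ratio is controlled by at most $d-1$ parallel-type and $d-1$ coparallel-type "free" directions. Each free direction costs a factor at most $2^2=4$ in the quotient, one factor of $2$ from each of $T(2,0)$ and $T(0,2)$ mismatching $T(1,1)$. Together this gives $(2^2)^{2(d-1)}=2^{4d-4}$.

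Concretely, I would prove a local lemma of the form $\Phi(M)\le 4\,\Phi(M')$, where $M'$ is obtained by contracting or deleting a single pair so that $\mathcal Z(M')$ avoids $\mathcal B_{d-1}$ on one side. Iterating this lemma $d-1$ times on the parallel side and $d-1$ times on the coparallel side leaves a matroid whose core is trivial on both sides; there $\Phi\le1$ by direct computation. Since $\mathcal B_1$ is a chain, the case $d=1$ should correspond to exactly this base case.

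The main obstacle is the lattice-theoretic step. One must show that exceeding $d-1$ independent parallel pairs, or $d-1$ independent coparallel pairs, really forces an induced $\mathcal B_d$ in $\mathcal Z(M)$, and that the interaction between the two kinds of pairs in the boundary matroids does not break induced-ness. Here I would first try taking the cyclic flats $\mathrm{cl}(\bigcup_{i\in S}p_i)$ for $S\subseteq[d]$ and checking that distinct $S$ give distinct, correctly ordered flats exactly when the $p_i$ form an independent set in the simplification. For the coparallel pairs I would apply the same argument to $M^*$, using $\mathcal Z(M^*)\cong\mathcal Z(M)^{\mathrm{op}}$, and the fact that $\mathcal B_d$ is self-dual.
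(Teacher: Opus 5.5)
Your structural step is correct and matches the paper's: by \cref{lem:subposet}, $\mathcal F(N)$ and $\mathcal F(K)^*$ are each induced subposets of $\mathcal Z(M)$, and the closures of subsets of a basis form an induced Boolean lattice, so $r(N),r(K)\le d-1$. The two sides never need to be merged into one Boolean lattice, so the ``interaction'' you flag as the main obstacle does not arise.

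The genuine gap is the quantitative step. Your local lemma $\Phi(M)\le 4\Phi(M')$, with $M'$ obtained by deleting or contracting a single pair so that a boundary rank drops, is not proved, and it is false. Take $M=U^{(2)}_{2,n}$ with $n\ge3$. It is connected and $2$-inflated with $N=U_{2,n}$ and $B'=\emptyset$. Its cyclic-flat lattice $\mathcal F(U_{2,n})$ has height $3$, so $M\in\mathcal C(\mathcal B_3)$ and $d=3$. Deleting a pair $X$ leaves $r(N)=2$, so you are forced to contract, and $M/X=U_{1,2n-2}$. Iterating \cref{lem:tutte-parallel} gives $T_M(1,1)=2n(n-1)$, $T_M(2,0)=2n$ and $T_M(0,2)=4^n-6n$. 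Hence
\[
\frac{\Phi(U^{(2)}_{2,n})}{\Phi(U_{1,2n-2})}=\frac n4\cdot\frac{4^n-8}{4^n-6n}>\frac n4,
\]
which exceeds $4$ for $n\ge16$ and is unbounded. The theorem itself is not contradicted, since both quotients are tiny; only your lemma fails. Applied to a pair, the Cauchy--Schwarz argument of \cref{lem:inductionbasis} only yields $\Phi(M)\le\max\{\Phi(M\setminus X),\tfrac43\Phi(M/X)\}$. The maximum can be the deletion term, which makes no progress on the boundary rank. Moreover, single-pair operations destroy $2$-inflatedness. Here the contraction merges all remaining pairs into one parallel class. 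Dually, deleting a pair turns every $4$-element cocircuit through it into a new coparallel pair. So the ``free directions'' you want to count down are not even well defined after one step.

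What is missing is a global estimate, which makes iteration unnecessary. Put $a=r(N)$, $b=r(K)$, $p=|A'|$, $q=|B'|$. With the forced core rank $r(M')=a+q-b$, the $i$-th summand of \eqref{eq:basis-upper-bound} carries the factor $2^{i+j}$ with $j=b-a+i$, and it vanishes unless $i\le a$ and $0\le j\le b$. Bounding $I_i(N)\le\binom pi$ and $I_j(K)\le\binom qj$ and using Vandermonde gives $T_M(1,1)\le 2^{a+b}\,2^{p+q}$. For the denominator, every loop- and coloop-free $L$ satisfies $T_L(2,0)T_L(0,2)\ge 2^{|E(L)|}$. This follows by applying \cref{lem:bjoerner}, with $\lambda(r,c)\ge 2^r$, to each component and its dual; rank-$1$ and corank-$1$ components are immediate. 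Since $|E(M)|=2(p+q)$, this gives $\Phi(M)\le 4^{a+b}\le 2^{4d-4}$. This is exactly your heuristic ``factor $4$ per free direction'', but obtained in one step from the basis count. The bounds $a,b\le d-1$ enter only through the exponent of $2$, not through a chain of minors.
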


Two important examples of a class $\mathcal C(P)$ come from bounding the complexity of a finite poset through its width or its height. For a poset $P$,  $w(P)$ denotes the maximum size of an antichain  and $h(P)$  the maximum size of a chain. Then, for $k\ge1$ we consider
\[
\mathrm{CW}(k)=\{M: w(\mathcal Z(M))\le k\},\qquad
\mathrm{CH}(k)=\{M: h(\mathcal Z(M))\le k\}.
\]
Thus, if $A_{k+1}$ is an antichain of size $k+1$ and $C_{k+1}$ a chain of size $k+1$, then $\mathrm{CW}(k)=\mathcal C(A_{k+1})$ and $\mathrm{CH}(k)=\mathcal C(C_{k+1})$. 

The cyclic-width hierarchy begins with nested matroids, equivalently Schubert or generalized Catalan matroids, at $\mathrm{CW}(1)$. The classes $\mathrm{CW}(2)$ and $\mathrm{CW}(3)$ were studied by Bonin and de Mier~\cite{BM08}. On the height side, the connected matroids in $\mathrm{CH}(2)$ are uniform, while the connected matroids in $\mathrm{CH}(3)$ are precisely the connected split matroids. Here, the multiplicative Merino-Welsh inequalties have been established by Ferroni and Schröter in~\cite{FS23}. Binary matroids of small cyclic height have also been studied from a structural point of view by Freil-Hollanti et al. in ~\cite{FGHW21}. Thus, bounded cyclic width and bounded cyclic height each extend previously studied classes and measure two complementary ways in which a cyclic-flat lattice can remain simple. We collect asymptotic consequences of \cref{thm:bounded-poset-quotient} for these classes as well as for posets of bounded order in \cref{cor:asymptotic-bounds}.

Using these results, we prove the multiplicative inequality for every matroid in $\mathrm{CW}(7)$, and hence for $\mathrm{CW}(k)$ for all $k\le7$:
\begin{restatable}{thm}{cyclicwidth}\label{thm:cyclicwidthseven}
The multiplicative Merino--Welsh inequality holds on $\mathrm{CW}(7)$.
\end{restatable}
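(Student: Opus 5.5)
Since $\mathrm{CW}(7)=\mathcal C(A_8)$ is minor-closed (\cref{lem:C(X)}), \cref{lem:inductionbasis} lets us restrict to $2$-inflated matroids $M$ with $w(\mathcal Z(M))\le 7$, and we must show $\Phi(M)\le 1$ for these. Write $M$ via its decorated core from \cref{sec:core}: every element lies in exactly one parallel or coparallel pair, and contracting/deleting one element of each pair yields the core $N$ together with the two boundary matroids. The plan is to (i) translate the width bound on $\mathcal Z(M)$ into a bound on the number of pairs, or more precisely on the structure of the core, and (ii) apply the boundary-matroid bounds for $\Phi$ from \cref{sec:core} to finish.

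For step (i), observe how pairs appear in $\mathcal Z(M)$. A parallel pair $\{e,e'\}$ is a circuit, so its closure is a cyclic flat of rank $1$. Distinct parallel classes give distinct rank-one cyclic flats, and these form an antichain. Dually, each coparallel pair gives a corank-one cyclic flat (the complement of a cocircuit, i.e.\ a hyperplane that is cyclic because $M$ is coloop-free and the pair is a series class), and these again form an antichain. Hence $M$ has at most $7$ parallel pairs and at most $7$ coparallel pairs. (If an atom can coincide with a coatom, we are in a low-rank case that can be checked directly.) So $|E(M)|\le 28$. Moreover, joins of atoms and meets of coatoms produce further antichains at intermediate ranks, and these restrict the configuration more tightly.

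For step (ii), I would use the boundary estimate of \cref{sec:core} to write $\Phi(M)$ as a product of local factors over the pairs. A parallel pair contributes a factor comparing $T$ at $(2,0)$, $(0,2)$ and $(1,1)$ on the boundary matroids. The known counterexamples of \cite{BCCP24false} need the thickening of a uniform matroid with many elements, so the number of atoms is large there. I expect that with at most $7$ atoms and at most $7$ coatoms the local bound already gives $\Phi\le 1$, except in a finite list of configurations. Because the Tutte polynomial depends only on the configuration \cite{Ebe14}, this leftover list is a finite enumeration: lattices of width at most $7$ decorated with ranks and sizes and compatible with $2$-inflation. These can be handled by computing $\Phi$ directly, with symmetry under duality (which preserves width) halving the work.

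The main obstacle is making step (ii) sharp. The generic bound of \cref{thm:bounded-poset-quotient} only gives $2^{4d-4}$, which is far from $1$, so I would need a refined estimate that exploits the explicit small number of pairs. My first attempt would be a direct inequality: for a $2$-inflation of a core with $p$ parallel and $q$ coparallel pairs, bound $\Phi$ by the $\Phi$ of the thickened uniform matroid extremal for those parameters, in the spirit of \cite{BCCP24false}. Their computation shows that $\Phi>1$ first occurs only once there are more than $7$ pairs on one side, and this would explain why the threshold is exactly $7$. If that comparison fails, I would fall back on a computer enumeration of the finitely many admissible configurations.
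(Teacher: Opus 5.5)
\textbf{There is a genuine gap, in step (ii).} Your step (i) is correct and matches the paper. In a $2$-inflated matroid each parallel pair is a rank-one cyclic flat, and each coparallel pair is the complement of a cyclic hyperplane. Hence $|A'|,|B'|\le 7$, which is the paper's bound $|E(N)|,|E(K)|\le 7$ coming from \cref{lem:width}.

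Step (ii), however, contains no estimate that can actually be proved.
\begin{itemize}
\item \cref{sec:core} gives no factorization of $\Phi(M)$ into local factors over the pairs. The deletion--contraction Cauchy--Schwarz step behind \cref{lem:inductionbasis} is exactly what fails on $2$-inflated matroids, since it needs both $M\setminus e$ and $M/e$ to be loop- and coloop-free.
\item Your proposed comparison with an extremal thickened uniform matroid is an unproven lemma. Combined with the reduction lemma, it would give the inequality for all matroids on a few dozen elements, far more than the theorem claims.
\item The motivation for that comparison is wrong. Nothing in \cite{BCCP24false} singles out $7$: the smallest known counterexamples $U^{(2)}_{21,31}$ and $U^{(2)}_{22,33}$ have over thirty parallel pairs. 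The paper only locates the first failing cyclic width somewhere in $[8,12870]$, so $7$ is where the method and computation stop, not a sharp threshold.
\item The fallback enumeration is finite only in principle. Your a priori bound allows cores with up to $14$ elements, well beyond the nine-element Mayhew--Royle catalogue. Enumerating abstract decorated lattices of width at most $7$ is equally out of reach.
\end{itemize}

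\textbf{The missing idea} is an estimate sharp enough to push every residual case down to cores of at most nine elements. The paper uses the universal boundary estimate \eqref{eq:cw4-bound}, which has two parts:
\begin{itemize}
\item By \cref{lem:first-reduction-bound}, $T_M(1,1)$ is bounded above by counting lifts of core bases through the independent-set numbers of the boundary matroids $N$ and $K$.
\item $T_M(2,0)$ and $T_M(0,2)$ are bounded below by the quantity $\lambda$ of \cref{lem:bjoerner}, fed with the rank and parallel-excess identities \eqref{eq:rank-formulas}. This needs $M$ connected with $r(M),r^*(M)\ge2$, a reduction you should also make explicit.
\end{itemize}
The second inequality of \cref{lem:width} forces $N$ and $K$ to be simple with $w(\mathcal F(N)),w(\mathcal F(K))\le 7$, which is a short explicit list. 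The estimate then settles widths up to $5$ outright (\cref{thm:cyclicwidthfive}). For width $7$ it leaves only $14$ residual boundary pairs, all with $|E'|\in\{7,8,9\}$. Only at that point is the enumeration usable:
\begin{itemize}
\item compatible cores are drawn from the nine-element catalogue;
\item their exact cyclic width is tested with the class-union criterion for cyclic flats;
\item the three Tutte values of the $2$-inflation are evaluated directly from the core rank function by the explicit formulas, never building the $18$-element matroid.
\end{itemize}
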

The bounds obtained from the two boundary matroids reduce the remaining cases to finitely many decorated cores on at most nine elements. We then use the catalogue of matroids on at most nine elements to evaluate the required Tutte values on these cores rather than on the corresponding $2$-inflated matroids.

Also, we prove the multiplicative inequality for every matroid in $\mathrm{CH}(6)$: 
\begin{restatable}{thm}{cyclicheight}\label{thm:cyclicheightsix}
The multiplicative Merino--Welsh inequality holds on $\mathrm{CH}(6)$.
\end{restatable}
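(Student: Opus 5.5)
By \cref{lem:C(X)}, $\mathrm{CH}(6)=\mathcal C(C_7)$ is minor-closed. So \cref{lem:inductionbasis} reduces the claim $\Phi(\mathrm{CH}(6))\le 1$ to $2$-inflated matroids $M$ with $h(\mathcal Z(M))\le 6$. I will then use the decorated-core machinery of \cref{sec:core}, in the same way as for \cref{thm:cyclicwidthseven}. To $M$ we associate its decorated core $N$, which records which elements come from parallel pairs and which from coparallel pairs, together with the two boundary matroids. The section gives bounds on $\Phi(M)$ in terms of Tutte evaluations of these boundary matroids. The aim is to show that these bounds already give $\Phi(M)\le 1$ unless $N$ is small, and to handle the small cores by direct computation.

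The first step is to relate the height of $\mathcal Z(M)$ to the size of the core. Each parallel pair is a cyclic set, and its closure is a cyclic flat. Dually, each coparallel pair causes a jump in the cyclic-flat lattice of $M^*$, whose lattice is the order-dual of $\mathcal Z(M)$ via complementation. Inflating the elements of a chain of flats in the core one pair at a time should therefore give a strictly increasing chain of cyclic flats in $M$. Consequently $h(\mathcal Z(M))\le 6$ bounds the rank and corank of the core, or some comparable quantity such as the length of a maximal chain of flats in $N$ that pass through inflated elements. I expect a bound of roughly $r(N)+r^*(N)\le 5$ or $|N|\le 9$. This would leave finitely many decorated cores, of the same size as in the width case.

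For each remaining core I will evaluate the boundary bound. Where that bound does not give $\Phi\le1$, I will compute $T_M(1,1)$, $T_M(2,0)$ and $T_M(0,2)$ directly. These values can be obtained from the Tutte polynomial of the core together with the decoration, using the parallel and series connection formulas for $2$-thickening and $2$-stretching, so the computation stays on at most nine elements. The catalogue of small matroids then finishes the check.

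The main obstacle is the first step: making the height-to-size bound sharp enough. Parallel and coparallel inflations interact in the lattice, because a parallel pair can lie inside a cyclic flat that is created by a coparallel jump, and then it does not add a new level. So the naive count may fail. If it does, I will first try to use the boundary inequalities to cut down the cores, in particular to dispose of cores with many elements of one type. Alternatively, I will bound the height from below by constructing an explicit chain alternating between closures of parallel pairs and complements of coparallel pairs, which gives the needed finiteness.
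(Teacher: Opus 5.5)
\textbf{There is a genuine gap.} Your reduction to $2$-inflated matroids via \cref{lem:C(X)} and \cref{lem:inductionbasis} is correct. The step you flag as the main obstacle, however, cannot be fixed by a sharper lattice count: cyclic height does not bound the size of the decorated core, nor its rank or corank.

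Two examples show this. The $2$-thickening of $U_{2,p}$ has $\mathcal Z(M)\cong\mathcal F(U_{2,p})$, of height $3$ for every $p$, and its core is $U_{2,p}$. Dually, coparallel inflation of $U_{p-2,p}$ again gives height $3$, with a core of rank $p-2$. So no chain construction, alternating or otherwise, can yield the finiteness you expect (your guess $r+r^*\le5$ for the core would even bound it by five elements).

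What height does control is the pair of boundary ranks. Write $N,K$ for the boundary matroids, as in \cref{sec:core}, not the core. Concatenating a maximal chain of $\mathcal F(N)$ with one in the upper copy of $\mathcal F(K)^*$ gives $h(\mathcal Z(M))\ge r(N)+r(K)+1$ (\cref{lem:height-first-reduction}), hence $r(N)+r(K)\le5$. Finiteness then comes only analytically. For fixed boundary ranks, the numerator of \eqref{eq:height-bound} is polynomial in $|A'|,|B'|$, while the $\lambda$-denominators are exponential, so only finitely many parameter tuples survive. Your fallback of using the boundary inequalities points in this direction, but it has to be the main argument, not a patch.

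The more serious problem is that the surviving cases do not all fit in the nine-element catalogue, so ``the catalogue then finishes the check'' fails. Take $(r(N),r(K))=(5,0)$. Then $B'=\emptyset$ and $M=N^{(2)}$ with $N$ simple, connected, of rank $5$. The coarse bound exceeds $1$ for $|A'|=10$ and $|A'|=11$ (about $1.90$ and $1.38$), and these cores lie beyond the available data.

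The paper needs an extra idea for these sizes:
\begin{itemize}
\item The thickening identity \eqref{eq:thickening-tutte} gives $T_M(1,1)=32T_N(1,1)$, $T_M(2,0)=T_N(2,0)$ and $T_M(0,2)=243\,T_N(2/3,4)$.
\item Since $g(N)\ge3$, \cref{thm:large-girth} puts $N$ itself in $\mathrm{CH}(5)$, so \cref{thm:cyclicheightfive} yields $T_N(1,1)^2\le T_N(2,0)T_N(0,2)$.
\item It then suffices to prove $T_N(2/3,4)\ge(4/3)^5T_N(0,2)$.
\item This is done by a linear program in the Tutte coefficients. The constraints are nonnegativity, the generalized Brylawski identities, the forced coefficients $t_{5,0},t_{4,0},t_{0,|E(N)|-5}$, and $T_N(0,2)\le2^{|E(N)|}$. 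The program is certified by exact rational Farkas multipliers.
\end{itemize}
Without some argument of this kind, your plan leaves the $10$- and $11$-element cores open.

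The paper also proceeds inductively: it discards all cases with $r(N)+r(K)\le4$ using $\mathrm{CH}(5)$, kills $(3,2)$ by the coarse bound, and treats the residual $(4,1)$ cases, which do stay within nine elements, by catalogue search.
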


For cyclic height, the coarse boundary bound again leaves only finitely many parameter tuples. The mixed residual cases remain within the nine-element catalogue. The remaining one-sided cases correspond to $2$-thickenings $N^{(2)}$ of simple rank-five matroids $N$ on ten or eleven elements. The $2$-thickening identity reduces them to linear inequalities for the Tutte coefficients. These are established by linear programming and certified by exact rational Farkas certificates, so no catalogue of larger matroids is needed.

Cyclic height also connects directly with circuit size. Following Rajpal~\cite{Raj98}, a rank-$r$ matroid is \emph{$k$-paving} if every circuit has more than $r-k$ elements. Thus ordinary paving matroids are precisely the $1$-paving matroids, while $2$-, $3$-, and $4$-paving are equivalent to $g(M)\ge r(M)-1$, $g(M)\ge r(M)-2$, and $g(M)\ge r(M)-3$, respectively. We call $M$ $k$-copaving if $M^*$ is $k$-paving. The $k$-paving hierarchy has also been revisited recently by Singh~\cite{Sin25}. Paving and copaving matroids are already covered by the split-matroid theorem~\cite{FS23}; our height approach proves the exact multiplicative inequality through the next three levels, namely for $4$-paving and $4$-copaving matroids.

\begin{restatable}{thm}{fourpaving}\label{cor:four-paving}
Every loop- and coloop-free $4$-paving or $4$-copaving matroid satisfies the multiplicative Merino--Welsh inequality.
\end{restatable}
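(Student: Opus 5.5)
The plan is to reduce \cref{cor:four-paving} to \cref{thm:cyclicheightsix} by showing that every loop- and coloop-free $4$-paving matroid lies in $\mathrm{CH}(6)$. The $4$-copaving case then follows by duality. Indeed, $T_{M^*}(x,y)=T_M(y,x)$, so $\Phi(M^*)=\Phi(M)$. Moreover, $\mathcal Z(M^*)$ is isomorphic to the order dual of $\mathcal Z(M)$ via $F\mapsto E\setminus F$, so cyclic height is preserved. It therefore suffices to treat a loop- and coloop-free $4$-paving matroid $M$ of rank $r$ and show $h(\mathcal Z(M))\le 6$.

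For the height bound, take a chain $Z_0\subsetneq Z_1\subsetneq\dots\subsetneq Z_{k-1}$ of cyclic flats. Since $M$ is loopless, the bottom of $\mathcal Z(M)$ is $\emptyset=\mathrm{cl}(\emptyset)$. The top is $E$, because $M$ has no coloops and so $E$ is cyclic. Rank strictly increases along the chain, since distinct comparable flats have distinct ranks. Every nonempty cyclic flat is a union of circuits, so it contains a circuit $C$ with $|C|>r-4$, i.e.\ of rank $|C|-1\ge r-4$. Hence every nonempty cyclic flat has rank at least $r-4$. In a chain, the nonempty members other than $E$ have pairwise distinct ranks in $\{r-4,\dots,r-1\}$, so there are at most $4$ of them. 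Together with $\emptyset$ and $E$, this gives a chain of size at most $6$. Thus $M\in\mathrm{CH}(6)$, and \cref{thm:cyclicheightsix} gives $\Phi(M)\le1$.

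A small care point is degenerate ranks. If $r\le 4$, the range $\{r-4,\dots,r-1\}$ is truncated and the bound still holds, since nonempty proper cyclic flats have rank between $1$ and $r-1$. One also has to use exactly the convention relating $k$-paving to circuit size, namely $|C|>r-k$ and hence $r(C)=|C|-1\ge r-k$. I do not expect a real obstacle here: all the substantive work sits in \cref{thm:cyclicheightsix}. The only step needing attention is the duality check that $\mathcal Z(M^*)$ is the order dual of $\mathcal Z(M)$, a standard fact from Bonin and de Mier~\cite{BM08}.
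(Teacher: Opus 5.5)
Your proposal is correct and is essentially the paper's argument: the bound $r(Z_1)\ge g(M)-1\ge r(M)-4$ on the nonempty cyclic flats of a chain is exactly the rank-chain argument of \cref{thm:large-girth} specialized to $\delta_g(M)\le 3$. The copaving case is handled, as in the paper, by passing to $M^*$, using that $\mathcal Z(M^*)$ and $\mathcal Z(M)$ have the same height, and then invoking \cref{thm:cyclicheightsix}.
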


This is complementary to Csikv\'ari's recent theorem, which proves the Merino--Welsh inequality when, for some $\ell\ge4$, all circuits of both $M$ and $M^*$ have lengths between $\ell$ and $(\ell-2)^2(\ell^2-4\ell+2)$~\cite{Csi26}. His condition controls circuits and cocircuits simultaneously in an absolute range, whereas ours is one-sided and relative to rank. Even the following weakening of a conjecture of~\cite{Csi26} remains open.
\begin{conj}
 There exists a function $f$ such that $\Phi(M)\leq f(g)$ for every matroid $M$ of girth and cogirth $g\geq 3$.
\end{conj}

\paragraph{Organization} The paper is organized as follows. We establish the reduction to $2$-inflated matroids in \cref{sec:inflated} and the associated core and boundary matroids in \cref{sec:core}.  In \cref{sec:sparse-cyclic-flats} we develop the induced-poset framework for cyclic flats and use all of the above to prove our main structural result, \cref{thm:bounded-poset-quotient}. The cyclic-width section (\cref{sec:width}) uses the universal boundary estimate to isolate finitely many residual boundary pairs and proves the result through width seven by exact residual-core computations. The cyclic-height section (\cref{sec:height}) first derives a coarser boundary estimate and then pushes it through height six; its final one-sided cases are settled by the $2$-thickening identity and exact Tutte-coefficient certificates rather than by enlarging the catalogue. We then relate cyclic height to girth and cogirth deficiency in \cref{sec:deficiency} and close with a discussion of possible extensions in \cref{sec:discussion}.

\section{{2}-inflated matroids}\label{sec:inflated}

We call a loop- and coloop-free matroid \emph{inflated} if every element belongs to exactly one maximal nontrivial parallel or coparallel class. An inflated matroid is \emph{$2$-inflated} if all these classes have size $2$. 

We first record two standard ingredients. The parallel-class formula below is stated in the form in which it will be used.

\begin{lem}\label{lem:tutte-parallel}
Let $X$ be a parallel class of a matroid $M$, with $|X|=k\geq2$. If $X$ is not a cocircuit, then
\[
T_M(x,y)=T_{M\setminus X}(x,y)+(1+y+\cdots+y^{k-1})T_{M/X}(x,y).
\]
Dually, if $X$ is a coparallel class that is not a circuit, then
\[
T_M(x,y)=(1+x+\cdots+x^{k-1})T_{M\setminus X}(x,y)+T_{M/X}(x,y).
\]
\end{lem}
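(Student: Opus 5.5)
We prove the parallel-class formula by induction on $k=|X|$, using only ordinary deletion--contraction $T_M=T_{M\setminus e}+T_{M/e}$ for an element $e$ that is neither a loop nor a coloop; the coparallel statement then follows by duality.

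Fix $e\in X$. Since $|X|\ge 2$, $e$ is not a loop. It is not a coloop either, because it has a parallel partner. Hence
\[
T_M=T_{M\setminus e}+T_{M/e}.
\]
In $M/e$ the remaining $k-1$ elements of $X$ become loops. Contracting $e$ is the same as contracting all of $X$ and then keeping $X\setminus e$ as loops, so
\[
T_{M/e}=y^{k-1}T_{M/X}.
\]
Here we use $M/e\setminus(X\setminus e)=M/X$, valid because the elements of $X\setminus e$ are loops in $M/e$, together with the factor $y$ per loop.

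In $M\setminus e$ the set $X\setminus e$ is a parallel class of size $k-1$. We recurse while $k-1\ge 2$, and then also need to handle the last element. The resulting telescoped sum is
\[
T_M=T_{M\setminus X'}+(y^{k-1}+\cdots+y)T_{M/X},
\]
where $X'$ is $X$ minus one element $f$. The delicate step is the last one: $f$ alone in $M\setminus X'$ is not a loop, and it is not a coloop exactly when $X$ is not a cocircuit of $M$. Indeed, $f$ is a coloop of $M\setminus X'$ iff $X'\cup f=X$ contains a cocircuit through $f$. The elements of $X$ are parallel, so any cocircuit meeting $X$ contains all of $X$; hence this happens iff $X$ is itself a cocircuit. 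This is the only place the hypothesis is used, and I expect it to be the main point needing care.

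With the hypothesis in force, deletion--contraction on $f$ gives
\[
T_{M\setminus X'}=T_{M\setminus X}+T_{M/X},
\]
using $M\setminus X'/f=M/X$, since the remaining elements of $X'$ are deleted and contracting $f$ turns nothing else in $X$ into a loop. Collecting terms yields
\[
T_M=T_{M\setminus X}+(1+y+\cdots+y^{k-1})T_{M/X}.
\]
At each intermediate step the chosen element is never a coloop, because it still has a parallel partner.

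For the dual statement, apply this to $M^*$. A coparallel class of $M$ is a parallel class of $M^*$, and it is a cocircuit of $M^*$ iff it is a circuit of $M$. We then use $T_{M^*}(x,y)=T_M(y,x)$, $(M^*\setminus X)^*=M/X$ and $(M^*/X)^*=M\setminus X$. The roles of deletion and contraction swap, giving
\[
T_M=(1+x+\cdots+x^{k-1})T_{M\setminus X}+T_{M/X}.
\]
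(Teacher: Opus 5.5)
Your proof is correct and is essentially the paper's argument: a telescoping deletion--contraction along the parallel class, in which each contraction turns the other class members into loops (giving the factors $y^{j-1}$), and the non-cocircuit hypothesis is used exactly once, for the last surviving element. The only difference is direction: you peel elements off $M$ one at a time, whereas the paper starts from the single-representative matroid $H_1$ and reinserts the remaining elements.
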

\begin{proof}
Retain one element $e$ of $X$ and call the resulting matroid $H_1$. Since $X$ is not a cocircuit, $e$ is not a coloop of $H_1$, and deletion--contraction gives $T_{H_1}=T_{M\setminus X}+T_{M/X}$. Reinsert the other elements of $X$ one at a time, and let $H_j$ have a parallel class of size $j$. Deleting the new element from $H_j$ gives $H_{j-1}$, while contracting it leaves the other $j-1$ members of the class as loops and otherwise gives $M/X$. Hence $T_{H_j}=T_{H_{j-1}}+y^{j-1}T_{M/X}$. Summing for $j=2,\ldots,k$ proves the first identity. The second is dual.
\end{proof}

We also use the activity expansion. For a linear ordering of $E(M)$ and a basis $B$, let $\iota_M(B)$ and $e_M(B)$ denote the numbers of internally and externally active elements, respectively. Then
\begin{equation}\label{eq:tutte-lin}
T_M(x,y)=\sum_{B\in\mathcal B(M)}x^{\iota_M(B)}y^{e_M(B)}.
\end{equation}
See, for example,~\cite[Section~7]{white1992matroid}.

\begin{lem}\label{lem:factorx}
Let $x,y\geq0$. If $e$ is not a loop of $M$, then $T_M(x,0)\geq xT_{M/e}(x,0)$. If $e$ is not a coloop of $M$, then $T_M(0,y)\geq yT_{M\setminus e}(0,y)$.
\end{lem}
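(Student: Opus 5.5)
Since the two statements are dual to each other, I prove the first and obtain the second by applying it to $M^*$. Indeed, $T_{M^*}(x,y)=T_M(y,x)$ and $(M^*)/e=(M\setminus e)^*$. Moreover, $e$ is not a coloop of $M$ exactly when $e$ is not a loop of $M^*$. Hence
\[
T_M(0,y)=T_{M^*}(y,0)\ge yT_{M^*/e}(y,0)=yT_{M\setminus e}(0,y).
\]

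For the first inequality I would use the activity expansion \eqref{eq:tutte-lin}. Evaluated at $(x,0)$ with $x\ge 0$, it gives
\[
T_M(x,0)=\sum_{B} x^{\iota_M(B)},
\]
where the sum runs over bases $B$ of $M$ with no externally active elements. Fix a linear order on $E(M)$ in which $e$ is the \emph{smallest} element. Since $e$ is not a loop, the bases of $M/e$ are exactly the sets $B\setminus e$ with $B$ a basis of $M$ containing $e$. So it suffices to show the following. For each basis $B'$ of $M/e$ with $e_{M/e}(B')=0$, the basis $B=B'\cup e$ of $M$ satisfies $e_M(B)=0$ and $\iota_M(B)=\iota_{M/e}(B')+1$. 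Summing $x^{\iota_M(B)}=x\cdot x^{\iota_{M/e}(B')}$ over these bases gives a subsum of $T_M(x,0)$ in which every term is nonnegative, and this subsum equals $xT_{M/e}(x,0)$.

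The key step is comparing activities, using the convention that an element is active when it is the smallest element of its fundamental circuit or cocircuit. I would check three things.
\begin{itemize}
\item For $f\notin B$, the fundamental circuit $C_M(B,f)$ is either $C_{M/e}(B',f)$ or $C_{M/e}(B',f)\cup e$. This holds because circuits of $M/e$ are the minimal nonempty sets $C\setminus e$. If $e\in C_M(B,f)$, then $f$ is not the minimum, so $f$ is inactive in $M$. Otherwise the two circuits agree and the activity status of $f$ is unchanged. Hence $e_M(B)\le e_{M/e}(B')=0$.
\item The fundamental cocircuit of $e$ with respect to $B$ contains $e$, which is the minimum element. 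So $e$ is internally active in $B$.
\item For $b\in B'$, cocircuits of $M/e$ are exactly the cocircuits of $M$ that avoid $e$. The fundamental cocircuit $C^*_M(B,b)$ lies in $(E\setminus B)\cup b$, so it avoids $e$. It therefore coincides with $C^*_{M/e}(B',b)$, and the activity of $b$ is unchanged.
\end{itemize}
Together these give $\iota_M(B)=\iota_{M/e}(B')+1$.

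The main obstacle is precisely these fundamental circuit and cocircuit comparisons under contraction. Choosing $e$ to be the minimum element is what makes them go through.
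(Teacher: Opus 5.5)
Your proof is correct and follows the same route as the paper. Both order $E(M)$ with $e$ first, use the activity expansion at $(x,0)$, and lift each basis $B'$ of $M/e$ with $e_{M/e}(B')=0$ to $B'\cup e$, which has zero external activity and exactly one more internally active element, namely $e$; the second inequality then follows by duality. Your fundamental circuit and cocircuit comparisons supply details the paper leaves implicit, and you rightly need only nonnegativity of the remaining terms, not the paper's remark that bases avoiding $e$ contribute zero.
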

\begin{proof}
It is enough to prove the first inequality, since the second follows by duality. Order $E(M)$ with $e$ first. By~\eqref{eq:tutte-lin}, every basis not containing $e$ has positive external activity and contributes zero to $T_M(x,0)$. If $e\in B$, then $e$ is internally active. Contracting $e$ decreases the internal activity of such a basis by $1$, while a basis of $M/e$ with zero external activity lifts to a basis of $M$ with zero external activity. Hence $T_M(x,0)\geq xT_{M/e}(x,0)$.
\end{proof}

\begin{lem}\label{prop:class2}
Let $M$ be loop- and coloop-free and let $X$ be a parallel or coparallel class of size $k\geq3$. Then $M$ has a proper loop- and coloop-free minor $N$ such that $\Phi(M)\leq\Phi(N)$.
\end{lem}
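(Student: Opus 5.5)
By duality ($T_{M^*}(x,y)=T_M(y,x)$, so $\Phi(M^*)=\Phi(M)$, and $M^*$ is again loop- and coloop-free) I will assume that $X$ is a parallel class. I then split into two cases according to whether $X$ is a cocircuit.

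\emph{Case 1: $X$ is a cocircuit.} Then $r(M\setminus X)=r(M)-1=r(M)-r(X)$, so $X$ is a separator and $M=(M\setminus X)\oplus U_{1,k}$. Since $\Phi$ is multiplicative over direct sums, $\Phi(M)=\Phi(U_{1,k})\Phi(M\setminus X)$. From $T_{U_{1,k}}=x+y+\dots+y^{k-1}$ we get $T(1,1)=k$, $T(2,0)=2$ and $T(0,2)=2^k-2$. Hence $\Phi(U_{1,k})=k^2/(2^{k+1}-4)<1$ for $k\ge3$. Take $N=M\setminus X$, which is loop- and coloop-free as a direct summand of $M$; if it is empty, $\Phi(N)=1$ with $T_\emptyset=1$. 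Then $\Phi(M)\le\Phi(N)$.

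\emph{Case 2: $X$ is not a cocircuit.} Write $A=T_{M\setminus X}$ and $C=T_{M/X}$. By \cref{lem:tutte-parallel},
\[
T_M(1,1)=A(1,1)+kC(1,1),\quad T_M(2,0)=A(2,0)+C(2,0),\quad T_M(0,2)=A(0,2)+(2^k-1)C(0,2).
\]
For $e\in X$, the matroid $M\setminus e$ has the same $A$ and $C$, with $k$ replaced by $k-1\ge2$. It remains loop- and coloop-free: $e$ has a parallel partner, so bases through $e$ can be rerouted, and no element becomes a coloop. The candidate minors are therefore $M\setminus e$, $M\setminus X$ and $M/X$, and I aim to show
\[
\Phi(M)\le\max\{\Phi(M\setminus e),\Phi(M\setminus X),\Phi(M/X)\}.
\]
Among $M\setminus X$ and $M/X$, only those that are loop- and coloop-free with positive denominators are admissible. $M/X$ has no new loops, because all elements parallel to $X$ lie in $X$. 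Coloops in $M/X$ or in $M\setminus X$ force the corresponding $C(0,2)$ or $A(0,2)$ to vanish, and I will treat those degenerate subcases by comparing with $M\setminus e$ alone.

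For the comparison with $M\setminus X$ and $M/X$, set $\alpha=\max\{\Phi(M\setminus X),\Phi(M/X)\}$. Then $A(1,1)\le\sqrt{\alpha A(2,0)A(0,2)}$ and $C(1,1)\le\sqrt{\alpha C(2,0)C(0,2)}$. Cauchy--Schwarz gives
\[
\bigl(\sqrt{A(2,0)A(0,2)}+k\sqrt{C(2,0)C(0,2)}\bigr)^2\le\bigl(A(2,0)+C(2,0)\bigr)\bigl(A(0,2)+k^2C(0,2)\bigr).
\]
Since $k^2\le 2^k-1$ for $k\ge5$, this already yields $\Phi(M)\le\alpha$ when $k\ge5$.

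The main obstacle is $k\in\{3,4\}$, where $k^2>2^k-1$. Here I bring in $M\setminus e$. \cref{lem:factorx} gives $T_M(0,2)\ge2T_{M\setminus e}(0,2)$, so the $(0,2)$-factor of $\Phi(M)/\Phi(M\setminus e)$ is at most $1/2$. This alone gives only $\Phi(M)/\Phi(M\setminus e)\le (k/(k-1))^2/2\le 9/8$.

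To close the gap, I will parametrize by $t=C(1,1)/A(1,1)$ and $u=C(0,2)/A(0,2)$. Then
\[
\frac{\Phi(M)}{\Phi(M\setminus e)}=\Bigl(\frac{1+kt}{1+(k-1)t}\Bigr)^2\frac{1+(2^{k-1}-1)u}{1+(2^k-1)u}.
\]
This ratio is at most $1$ unless $u$ is small relative to $t$. In that regime $C(0,2)$ is small compared with $C(1,1)$, which pushes $\Phi(M/X)$ up. I will then run a weighted Cauchy--Schwarz in which the coefficient $k^2$ is split between the $M/X$ term and the gain from $M\setminus e$. The goal is to show that in every regime of $(t,u)$ at least one of the three candidate minors dominates; this is a finite two-variable inequality check for $k=3,4$.
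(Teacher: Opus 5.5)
\textbf{There is a genuine gap in Case 2 for $k\in\{3,4\}$.} Your Case 1 is correct, and so is Case 2 for $k\ge5$ (plain Cauchy--Schwarz with $k^2\le 2^k-1$). For $k\in\{3,4\}$, however, you only outline a plan, and that plan cannot succeed with the relations you have listed. The comparison is not a two-variable problem. Besides $t$ and $u$, the values $\Phi(M\setminus X)$ and $\Phi(M/X)$ depend on how $T_M(2,0)$ splits into $A(2,0)+C(2,0)$. Once that split is left free, the claim that one of $M\setminus e$, $M\setminus X$, $M/X$ dominates is false.

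For $k=3$ take the numbers $A(1,1)=1$, $C(1,1)=3$, $A(0,2)=C(0,2)=1$, $A(2,0)=1$, $C(2,0)=4$. They satisfy all your formulas, and even your constraint $T_M(0,2)\ge 2T_{M\setminus e}(0,2)$ (with equality, $8\ge 8$). Yet $\Phi(M)=100/40=5/2$, while $M\setminus e$, $M\setminus X$ and $M/X$ give $49/20$, $1$ and $9/4$. So no weighted Cauchy--Schwarz built only from these quantities can close the case.

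\textbf{The missing ingredient} is a second use of \cref{lem:factorx}, applied to $H=M\setminus(X\setminus e)$ rather than to $M$. In $H$ the element $e$ is neither a loop nor a coloop; it is not a coloop because $X$ is not a cocircuit. Hence $T_H=T_{M\setminus X}+T_{M/X}$. Together with $T_H(2,0)\ge 2T_{M/X}(2,0)$ and $T_H(0,2)\ge 2T_{M\setminus X}(0,2)$, this yields $A(2,0)\ge C(2,0)$ and $C(0,2)\ge A(0,2)$. The second inequality is equivalent to the one you derived for $M\setminus e$. The first is exactly what the example above violates.

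\textbf{How the case then closes.} Put $x=\sqrt{A(2,0)/C(2,0)}\ge1$ and $y=\sqrt{C(0,2)/A(0,2)}\ge1$. The required inequality becomes $(x+ky)^2\le(x^2+1)(1+(2^k-1)y^2)$. The difference is increasing in $x$, and then in $y$ along $x=1$. Its minimum is $2^{k+1}-(k+1)^2\ge0$ at $(1,1)$, which is tight for $k=3$. This settles every $k\ge3$ at once using only $M\setminus X$ and $M/X$. This is the paper's route.

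\textbf{Role of $M\setminus e$ and the degenerate cases.} You need $M\setminus e$ only when $A(0,2)=0$. There your comparison gives a ratio of at most $(k/(k-1))^2(2^{k-1}-1)/(2^k-1)$, which is $<1$ for all $k\ge3$ and equals $27/28$ at $k=3$. The paper instead compares with the minor retaining two elements of $X$. Your remark on degenerate subcases is also partly wrong. If $C(0,2)$ could vanish, comparing with $M\setminus e$ would give the ratio $\left((A(1,1)+kC(1,1))/(A(1,1)+(k-1)C(1,1))\right)^2>1$. That case simply never occurs: the coloops of $M/X=H/e$ are those of $H$, and $H$ has none.
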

\begin{proof}
By duality it is enough to treat a parallel class $X$.

Suppose first that $X$ is a cocircuit. No circuit can meet both $X$ and $E(M)\setminus X$: such a circuit would meet the cocircuit $X$ in exactly one element, since a circuit containing two elements of the parallel class is the corresponding $2$-element circuit. Hence $M=M_0\oplus U_{1,k}$. Since $\Phi(U_{1,k})=k^2/(2(2^k-2))\leq1$ for $k\geq2$, replacing this component by $U_{1,2}$ produces a proper loop- and coloop-free minor with quotient at least $\Phi(M)$.

Assume now that $X$ is not a cocircuit. Delete all but one element $e\in X$, obtaining a matroid $H$, and put $P=H\setminus e=M\setminus X$ and $Q=H/e=M/X$. Write $a=T_P(1,1)$, $b=T_Q(1,1)$, $u=T_P(2,0)$, $v=T_Q(2,0)$, $c=T_P(0,2)$, and $d=T_Q(0,2)$. By~\cref{lem:tutte-parallel},
\begin{equation}\label{eq:parallel-k-values}
T_M(1,1)=a+kb,\qquad T_M(2,0)=u+v,\qquad T_M(0,2)=c+(2^k-1)d.
\end{equation}
The element $e$ is neither a loop nor a coloop of $H$. Thus~\cref{lem:factorx}, applied to $H$ at $(2,0)$ and $(0,2)$, gives $u\geq v$ and $d\geq c$.

First suppose $c>0$. The matroid $P$ is loop-free by construction and $Q$ is loop-free because $e$ has no parallel mate in $H$. Both matroids $P$ and $Q$ are coloop-free because  $d\geq c>0$. Put $R=\max\{\Phi(P),\Phi(Q)\}$. Then $a\leq\sqrt{Ruc}$ and $b\leq\sqrt{Rvd}$. It is therefore enough to prove $(\sqrt{uc}+k\sqrt{vd})^2\leq(u+v)(c+(2^k-1)d)$. Put $x=\sqrt{u/v}\geq1$ and $y=\sqrt{d/c}\geq1$. After division by $vc$, this becomes
\begin{equation}\label{eq:parallel-scalar}
(x+ky)^2\leq(x^2+1)(1+(2^k-1)y^2).
\end{equation}
The difference between the right- and left-hand sides is
$F_k(x,y)=(2^k-1)x^2y^2+(2^k-1-k^2)y^2-2kxy+1$. For $k\geq3$ and $x,y\geq1$, one has $\partial F_k/\partial x=2y((2^k-1)xy-k)>0$. After setting $x=1$, one has $\partial F_k/\partial y=2((2^{k+1}-2-k^2)y-k)\geq0$, since $2^{k+1}-2-k^2\geq k$ for $k\geq3$. Hence the minimum is $F_k(1,1)=2^{k+1}-(k+1)^2\geq0$. Thus~\eqref{eq:parallel-scalar} holds and~\eqref{eq:parallel-k-values} gives $\Phi(M)\leq R$.

Finally suppose $c=0$. Let $M_2$ be obtained from $M$ by retaining exactly two elements of $X$. First, $M_2$ is a proper minor of $M$, since $k\geq 3$ and at least one element of $X$ is deleted. It is loop- and coloop-free: deletion creates no loops, and any deleted element of $X$ may be replaced in a basis by a retained representative, so no new coloops arise. Since $X$ is not a cocircuit of $M$, the retained pair is not a cocircuit of $M_2$.
 Applying~\cref{lem:tutte-parallel} to $M$ and $M_2$ gives
\[
\frac{\Phi(M)}{\Phi(M_2)}=\left(\frac{a+kb}{a+2b}\right)^2\frac{3}{2^k-1}
\leq\frac{3k^2}{4(2^k-1)}<1
\]
for every $k\geq3$.
\end{proof}

\basislem*
\begin{proof}
It is enough to show that every loop- and coloop-free $M\in\mathcal M$ has a $2$-inflated minor $N$ with $\Phi(M)\leq\Phi(N)$. We prove this by induction on $|E(M)|$. If there is an element $e$ for which both $M\setminus e$ and $M/e$ are loop- and coloop-free, then $\Phi(M)\leq\max\{\Phi(M\setminus e),\Phi(M/e)\}$. Indeed, for $j\in\{0,1\}$ put $(a_j,b_j,c_j)=(T_{M_j}(2,0),T_{M_j}(0,2),T_{M_j}(1,1))$, where $M_0=M\setminus e$ and $M_1=M/e$. By deletion--contraction and the Cauchy--Schwarz inequality,
\[
\sqrt{\Phi(M)}\leq\frac{c_0+c_1}{\sqrt{a_0b_0}+\sqrt{a_1b_1}}
\leq\max_{j\in\{0,1\}}\frac{c_j}{\sqrt{a_jb_j}}.
\]
This is the factor-one case of the argument in~\cite[Lemma~4.3]{KMR18}. Hence one of the two proper minors has quotient at least $\Phi(M)$ and induction applies.

Otherwise, for every $e\in E(M)$ at least one of $M\setminus e$ and $M/e$ has a loop or coloop. Deletion cannot create loops and contraction cannot create coloops in a loop- and coloop-free matroid. Moreover, $M\setminus e$ has a coloop $f$ exactly when $\{e,f\}$ is a cocircuit of $M$, while $M/e$ has a loop $f$ exactly when $\{e,f\}$ is a circuit of $M$. Hence every element belongs to a nontrivial coparallel or parallel class. If an element belongs to both kinds of class, the corresponding $2$-element circuit and cocircuit coincide and form a direct-sum component $U_{1,2}$. Since $\Phi(U_{1,2})=1$ and $\Phi$ is multiplicative under direct sums, deleting this component gives a proper loop- and coloop-free minor with the same quotient, and induction applies. We may therefore assume that every element belongs to exactly one kind of class, so $M$ is inflated. If one of its maximal nontrivial parallel or coparallel classes has size at least $3$,~\cref{prop:class2} gives a proper loop- and coloop-free minor with quotient at least $\Phi(M)$, and induction applies. Otherwise all these classes have size $2$, so $M$ is $2$-inflated.
\end{proof}

\section{The core and its boundary matroids}\label{sec:core}

Let $M$ be an inflated matroid. Write $E(M)=A\mathbin{\dot\cup}B$, where $A$ is the union of the maximal nontrivial parallel classes and $B$ is the union of the maximal nontrivial coparallel classes. Choose one representative from each class and let $A'$ and $B'$ be the two sets of representatives. Define
$M'=M\setminus(A\setminus A')/(B\setminus B')$, on ground set $E'=A'\mathbin{\dot\cup}B'$. Thus the deletion on the $A$-side is the usual simplification restricted to the nontrivial parallel classes, while the contraction on the $B$-side is its dual, the corresponding cosimplification. Note that $M'$ is unique up to an isomorphism respecting the partition $A'\mathbin{\dot\cup}B'$.
We call $(M',A',B')$ the \emph{decorated core} of $M$, and call
$N=M'|A'$ and $K=(M'/A')^*$ its \emph{boundary matroids}. We record some elementary properties of the decorated core.

\begin{lem}\label{lem:boundary-simple}
The boundary matroids $N$ and $K$ are simple. Moreover, $r(N)=r_M(A)$ and $r(K)=r_{M^*}(B)$.
\end{lem}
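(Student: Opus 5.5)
The plan is to prove the four assertions separately: simplicity of $N$, simplicity of $K$, and the two rank identities. For each we track how parallel and coparallel pairs behave under the deletion of $A\setminus A'$ and the contraction of $B\setminus B'$. The basic observation is this. Deleting all but one element of each nontrivial parallel class does not change the rank of any set containing the representative. Dually, contracting all but one element of each nontrivial coparallel class does not change the corank of any set containing the representative.

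\textbf{Rank identities.} Here $r(N)=r_{M'}(A')$. Since $M'=M\setminus(A\setminus A')/(B\setminus B')$, we have
\[
r_{M'}(A')=r_M(A'\cup(B\setminus B'))-r_M(B\setminus B').
\]
We first show that $B\setminus B'$ is skew to $A$ in a suitable sense, namely $r_M(A\cup(B\setminus B'))=r_M(A)+|B\setminus B'|$. Each element $f\in B\setminus B'$ lies in a coparallel pair $\{f,f'\}$, which is a cocircuit, with $f'\in B'$. Hence $E\setminus\{f,f'\}$ is a hyperplane containing $A$ and all other elements of $B$. Therefore $f$ is not in the closure of $E\setminus\{f,f'\}$, and in particular not in the closure of $A\cup (B\setminus\{f,f'\})$. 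Adding the elements of $B\setminus B'$ one at a time, each raises the rank by one, so this set is independent and skew to $A$ in the stated sense. Combined with $r_M(A')=r_M(A)$ (parallel representatives span the same closure), this gives $r(N)=r_M(A)$.

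The statement $r(K)=r_{M^*}(B)$ is exactly the dual computation. We have $K=(M'/A')^*=M'^*\setminus A'$, which is the restriction of $M^*\setminus(B\setminus B')/(A\setminus A')$ to $B'$. Here the roles of $A$ and $B$ are swapped, and the identical argument applies to $M^*$, in which $B$ is the union of the parallel classes.

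\textbf{Simplicity.} By duality it suffices to treat $N$; simplicity of $K$ is the same argument for $M^*$. We need to show that $N$ has no loops and no parallel pairs.

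For loops: if $a\in A'$ were a loop of $M'$, then $a\in \mathrm{cl}_M(B\setminus B')$. But $a$ is not a loop of $M$, and $B\setminus B'$ is skew to $A$ by the argument above. So $a$ is not a loop of $M'$.

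For parallel pairs: suppose $a,a'\in A'$ are parallel in $M'$. Then $\{a,a'\}\cup Z$ contains a circuit of $M$ for some $Z\subseteq B\setminus B'$ with $r$ dropping. Rather than analyse this circuit directly, we argue via ranks. Parallelism in $M'$ means
\[
r_M(\{a,a'\}\cup(B\setminus B'))=1+|B\setminus B'|.
\]
The skewness identity applied to the set $\{a,a'\}\subseteq A$ gives $r_M(\{a,a'\})+|B\setminus B'|$ for the left-hand side, so $r_M(\{a,a'\})=1$. Thus $a$ and $a'$ are parallel in $M$, so they lie in the same maximal parallel class. This contradicts the choice of distinct representatives.

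\textbf{Main obstacle.} The key step is the skewness claim $r_M(S\cup(B\setminus B'))=r_M(S)+|B\setminus B'|$ for every $S\subseteq A$. The risk is that the hyperplane argument only handles one element at a time. To handle this, we would order $B\setminus B'$ as $f_1,\dots,f_m$ and apply the cocircuit $\{f_i,f_i'\}$ at each step. The element $f_i$ avoids the closure of everything except $f_i'$, and since $f_i'\in B'$ is never added, the induction goes through.
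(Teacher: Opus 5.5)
Your proof is correct and follows essentially the paper's route. In both, the key fact is that $C=B\setminus B'$ is independent and skew to $A$. The paper phrases this as $M|(A\cup C)=M|A\oplus M|C$ and proves it by noting that every circuit meeting a coparallel class contains the whole class; you get it from the $2$-cocircuits $\{f,f'\}$, which is the same orthogonality fact. From there the rank identity and the dual statement for $K$ follow identically. The only small difference is that you derive simplicity of $N$ from the skewness identity by a rank computation, whereas the paper argues it directly from the circuit observation.
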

\begin{proof}
The set $A'$ contains one representative from each maximal parallel class. Contracting all but one element of a coparallel class creates neither a loop nor a new parallel pair among the elements of $A'$: every circuit meeting a coparallel class contains that whole class. Hence $N$ is simple. The assertion for $K$ follows by duality.

For the rank identity, let $C=B\setminus B'$. If a circuit contained in $A\cup C$ met $C$, then, because it meets a coparallel class, it would contain the representative of that class in $B'$, a contradiction. Thus, $M|(A\cup C)= M|A\oplus M|C$. Also, $C$ is independent. 
Now, by definition of contraction, $r_{M/C}(A)=r_M(A\cup C)-r_M(C)=r_M(A)+r_M(C)-|C|=r_M(A)$. Deleting the extra parallel representatives does not change this rank, giving $r(N)=r_M(A)$. The formula for $K$ follows by duality.

\end{proof}

We will make use of the fact that the Tutte polynomial of an inflated matroid can be bounded in terms of its decorated core. For a loopless matroid $M$, let $\operatorname{si}(M)$ denote its simplification. We call $\operatorname{pe}(M)=|E(M)|-|E(\operatorname{si}(M))|$ the \emph{parallel excess} of $M$; equivalently, it is the number of elements in nontrivial parallel classes minus the number of such classes. For integers $r\geq2$ and $c\geq0$, put
\begin{equation}\label{eq:lambda}
\lambda(r,c)=(c+1)2^r-4c+2.
\end{equation}

\begin{lem}\label{lem:bjoerner}
Let $M$ be a connected matroid of rank $r\geq2$, and put $n=|E(M)|$. Then
$T_M(2,0)\geq\lambda(r,n-\operatorname{pe}(M)-r)$.
\end{lem}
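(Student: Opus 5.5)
The plan is to first reduce to the simple case and then run an induction on $c:=n-\operatorname{pe}(M)-r=|E(\operatorname{si}(M))|-r$.

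\emph{Reduction to simple matroids.} If $e$ is parallel to some $f\neq e$, then $f$ is a loop of $M/e$. Hence $T_{M/e}(x,y)$ has a factor $y$, and $T_{M/e}(2,0)=0$. Since $e$ is not a coloop, deletion--contraction gives $T_M(2,0)=T_{M\setminus e}(2,0)$. Deleting parallel elements preserves connectivity and rank. Iterating therefore gives $T_M(2,0)=T_{\operatorname{si}(M)}(2,0)$. So we may assume $M$ is simple, connected, of rank $r\ge 2$, with $n=r+c$ elements. Connectivity with $r\ge2$ forces $c\ge 1$.

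\emph{Base case and main step.} For $c=1$, $M=U_{r,r+1}$. Then $T_M(x,0)=x+\dots+x^r$, so $T_M(2,0)=2^{r+1}-2=\lambda(r,1)$. For $c\ge2$, we want an element $e$ with $M\setminus e$ connected. Then $M\setminus e$ is simple, connected, of rank $r$, with parameter $c-1$, and induction gives $T_{M\setminus e}(2,0)\ge\lambda(r,c-1)$. Since $\lambda(r,c)-\lambda(r,c-1)=2^r-4$, it suffices to show $T_{M/e}(2,0)\ge 2^r-4$.

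The matroid $M/e$ has rank $r-1$. It is loopless because $M$ is simple, and coloop-free because contraction creates no coloops. Write $M/e$ as a direct sum of connected components of ranks $s_1,\dots,s_k\ge1$ with $\sum s_i=r-1$. Each component is loop- and coloop-free of positive rank, hence not free. Applying the already-established case of the lemma to its simplification (or, if $s_i=1$, a direct check) gives a component value $\ge 2^{s_i+1}-2$.

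We then check the product inequality. One component gives $2^r-2\ge2^r-4$. For two components,
\[
(2^{a+1}-2)(2^{b+1}-2)\ge 2^{a+b+1}-4
\]
is equivalent to $2^{a+b+1}+8\ge 4(2^a+2^b)$, which holds for $a,b\ge1$. More components follow by iterating, since the right-hand side form is preserved up to slack. To make this non-circular, I would run the induction on $n$ rather than on $c$ alone, so that the bound is available for all smaller matroids, including those of rank $1$ where it is checked directly.

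\emph{Main obstacle.} The hard step is guaranteeing an element $e$ with $M\setminus e$ connected whenever $c\ge2$. By Tutte's lemma, for each $e$ at least one of $M\setminus e$ and $M/e$ is connected. If every deletion disconnects, $M$ is minimally connected, and Murty's theorem yields a series pair $\{e,f\}$. My first attempt would use this pair directly. Write
\[
T_M(2,0)=T_{M\setminus e}(2,0)+T_{M/e}(2,0).
\]
Here $M/e$ is connected of rank $r-1$, and its simplification has $c$ extra elements, so by induction $T_{M/e}(2,0)\ge\lambda(r-1,c)$. The matroid $M\setminus e$ has $f$ as a coloop, so $T_{M\setminus e}(2,0)=2\,T_{M\setminus e/f}(2,0)$. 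Moreover $M\setminus e/f$ is a rank-$(r-1)$ matroid, loop- and coloop-free, which we bound componentwise as above. The remaining work is to verify
\[
\lambda(r-1,c)+2\cdot(\text{that bound})\ge\lambda(r,c),
\]
which requires a bound of roughly $c\,2^{r-2}$ on the second term. I expect this bookkeeping to be the delicate point.
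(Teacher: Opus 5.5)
\textbf{There is a genuine gap.} Your reduction to the simple case and your base case $c=1$ are fine. The main inductive step fails, though, and not only in the minimally connected case you flagged.

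You assign the whole increment $\lambda(r,c)-\lambda(r,c-1)=2^r-4$ to $T_{M/e}(2,0)$ and justify $T_{M/e}(2,0)\ge 2^r-4$ by multiplying componentwise bounds. The two-component inequality is correct, but the claim that more components follow by iterating is false. Two rank-$1$ components give $2\cdot2=4=2^{3}-4$, with no slack at all. Three rank-$1$ components give $8<12=2^{4}-4$. Since $T_{U_{1,m}}(2,0)=2$ exactly, no sharper per-component estimate can help.

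This configuration actually occurs at the only admissible $e$. Let $B_3$ be three triangles sharing an edge $e$, and $M=M(B_3)$. This matroid is simple and connected, with rank $4$, $n=7$ and $c=3$.
\begin{itemize}
\item Deleting any edge other than $e$ leaves a bridge, so $e$ is the unique element with $M\setminus e$ connected.
\item $M/e$ is a direct sum of three parallel pairs, so $T_{M/e}(2,0)=8<12$.
\item The lemma is tight here: $T_M(x,0)=x(1+x)^3$ gives $T_M(2,0)=54=\lambda(4,3)$.
\item The missing $4$ comes from $M\setminus e=M(K_{2,3})$, where $T_{K_{2,3}}(2,0)=46$ exceeds your inductive bound $\lambda(4,2)=42$.
\end{itemize}
So your chain of inequalities yields at most $42+8=50<54$. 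More generally, for book graphs $B_{r-1}$ one gets $T_{M/e}(2,0)=2^{r-1}$, far below $2^r-4$. An induction that feeds only the single number $\lambda(r,c-1)$ forward through deletion--contraction cannot close. In addition, the minimally connected case is left unfinished in your proposal.

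\textbf{What the paper does instead.} It avoids deletion--contraction entirely. After $T_M(x,0)=T_{\operatorname{si}(M)}(x,0)$, it cites Bj\"orner's coefficient inequalities (Proposition 7.5.7 in White's book). For a connected simple matroid with $n'$ elements, write $T_{\operatorname{si}(M)}(x,0)=\sum_{i=0}^{r}h_ix^{r-i}$. Then $h_0=1$, $h_i\ge n'-r$ for $1\le i\le r-2$, and $h_{r-1}\ge1$. Evaluating termwise at $x=2$ gives $2^r+(n'-r)(2^r-4)+2=\lambda(r,n'-r)$.

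To rescue your approach you would need such coefficientwise information. Even that does not come from a naive deletion--contraction induction. In the example, $h_2(M)=h_2(M\setminus e)+h_1(M/e)=3+0$, and this uses $h_2(M\setminus e)=3$, which is strictly larger than the inductive bound $n'-r=2$ for $M\setminus e$.
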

\begin{proof}

Put $n'=|E(\operatorname{si}(M))|=n-\operatorname{pe}(M)$. Parallel extensions do not change $T(x,0)$, so $T_M(x,0)=T_{\operatorname{si}(M)}(x,0)$. Since $M$ is connected of rank at least $2$, so is $\operatorname{si}(M)$. By~\cite[Proposition~7.5.7]{white1992matroid}, we may write $T_{\operatorname{si}(M)}(x,0)=\sum_{i=0}^{r}h_i x^{r-i}$ with $h_0=1$, $h_i\geq n'-r$ for $1\leq i\leq r-2$, and $h_{r-1}\geq1$. Hence
$T_M(2,0)\geq2^r+(n'-r)(2^{r-1}+\cdots+2^2)+2=\lambda(r,n'-r)$.
\end{proof}

As a second ingredient, for a matroid $M$, let $I_j(M)$ denote the number of independent $j$-subsets of $E(M)$. Following Dowling~\cite{Dow80}, we call these the \emph{independent set numbers} of $M$. If $\mathcal I(M)$ denotes the independence complex, then $I_j(M)=f_{j-1}(\mathcal I(M))$. We will simply refer to $(I_j(M))_{0\le j\le r(M)}$ as the \emph{$f$-vector of $M$}.

\begin{lem}\label{lem:first-reduction-bound}
Let $M$ be $2$-inflated, $(M',A',B')$ its decorated core, and $N$ and $K$ its boundary matroids. Then
\begin{equation}\label{eq:basis-upper-bound}
T_M(1,1)\leq\sum_i2^{|B'|-r(M')+2i}I_i(N)I_{|B'|-r(M')+i}(K).
\end{equation}
Moreover, for rank and parallel excess we have:
\begin{equation}\label{eq:rank-formulas}
\begin{aligned}
r(M)&=r(M')+|B'|,& \operatorname{pe}(M)&=|A'|,\\
r^*(M)&=2|A'|+|B'|-r(M'),& \operatorname{pe}(M^*)&=|B'|.
\end{aligned}
\end{equation}
If $M$ is connected and $r(M),r^*(M)\geq2$, then
\begin{equation}\label{eq:bjoerner-bounds}
\begin{aligned}
T_M(2,0)&\geq\lambda\bigl(r(M),|A'|+|B'|-r(M')\bigr),\\
T_M(0,2)&\geq\lambda\bigl(r^*(M),r(M')\bigr).
\end{aligned}
\end{equation}
\end{lem}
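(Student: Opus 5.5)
The plan is to prove the rank formulas~\eqref{eq:rank-formulas} first, derive~\eqref{eq:bjoerner-bounds} from them via \cref{lem:bjoerner}, and then prove~\eqref{eq:basis-upper-bound} by an injective encoding of the bases of $M$. Throughout, put $C=B\setminus B'$. Since $M$ is $2$-inflated, $|C|=|B'|$ and $|A\setminus A'|=|A'|$, so $|E(M)|=2|A'|+2|B'|$.

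\emph{Rank formulas.} As shown in the proof of \cref{lem:boundary-simple}, $C$ is independent in $M$. Hence contracting $C$ lowers the rank by $|C|=|B'|$. Deleting $A\setminus A'$ removes only parallel copies, so it does not change the rank. This gives $r(M')=r(M)-|B'|$. Each parallel class has size $2$ and elements of $B$ lie in no nontrivial parallel class, so $\operatorname{pe}(M)=|A'|$. Then $r^*(M)=|E(M)|-r(M)=2|A'|+2|B'|-r(M')-|B'|$, which is the stated formula. Finally $\operatorname{pe}(M^*)=|B'|$, since coparallel classes of $M$ are the parallel classes of $M^*$.

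\emph{Björner-type bounds.} Apply \cref{lem:bjoerner} to the connected matroid $M$. The parameter is $|E(M)|-\operatorname{pe}(M)-r(M)=2|A'|+2|B'|-|A'|-r(M')-|B'|=|A'|+|B'|-r(M')$. For the second bound use $T_M(0,2)=T_{M^*}(2,0)$ and apply \cref{lem:bjoerner} to $M^*$, which is connected of rank $r^*(M)\ge2$. Its parameter is $2|A'|+2|B'|-|B'|-r^*(M)=r(M')$.

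\emph{Basis bound.} Since $T_M(1,1)$ counts bases, we encode each basis $\mathcal B$ of $M$ by three pieces of data.
\begin{itemize}
\item $\mathcal B$ contains at most one element of each parallel pair. Let $S\subseteq A'$ be the set of representatives of the pairs it meets, and put $i=|S|$.
\item $E\setminus\mathcal B$ is a cobasis, so by the dual argument it contains at most one element of each coparallel pair. Hence $\mathcal B$ contains one or both elements of each coparallel pair. Let $J\subseteq B'$ be the set of representatives of the pairs of which $\mathcal B$ contains exactly one element, and put $j=|J|$.
\item The choice of which element is taken in each of these $i+j$ pairs.
\end{itemize}
Counting gives $r(M')+|B'|=|\mathcal B|=i+j+2(|B'|-j)$, so $j=|B'|-r(M')+i$. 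The third piece of data contributes a factor $2^{i+j}=2^{|B'|-r(M')+2i}$, and together the three pieces determine $\mathcal B$. It then suffices to show that $S$ is independent in $N$ and $J$ is independent in $K$.

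For $S$: the set $\mathcal B\cap A$ is independent in $M$, and replacing elements by their parallel representatives keeps it independent in $M|A$. By the decomposition $M|(A\cup C)=M|A\oplus M|C$ from the proof of \cref{lem:boundary-simple}, contracting $C$ preserves this independence. Hence $S$ is independent in $(M/C)|A'=N$.

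For $J$, the argument is dual, and this bookkeeping is the main point to check carefully. The decorated core of $M^*$ is $(M'^*,B',A')$, because dualizing swaps deletion and contraction and swaps parallel and coparallel classes. Its $N$-type boundary matroid is $M'^*|B'=(M'/A')^*=K$. The cobasis $E\setminus\mathcal B$ meets exactly the coparallel pairs indexed by $J$, one element each. Applying the argument for $S$ to $M^*$ and this basis shows that $J$ is independent in $K$.

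Summing over $i$ yields~\eqref{eq:basis-upper-bound}.
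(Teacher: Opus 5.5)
Your proof is correct and is essentially the paper's counting argument. The paper indexes the bases of $M$ by bases $D$ of the core $M'$, counts the $2^{|X|+|J|}$ lifts of each $D$, and gets independence of $J=B'\setminus D$ in $K$ from the fact that $D\cap B'$ spans $M'/A'$. You instead encode bases of $M$ injectively and obtain independence of $J$ by applying your $S$-argument to $M^*$, whose decorated core is $((M')^*,B',A')$. This only needs injectivity rather than the exact lifting correspondence, and your derivation of the rank formulas via the independence of $B\setminus B'$ is more explicit than the paper's one-line justification.
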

\begin{proof}
Let $D$ be a basis of $M'$, put $X=D\cap A'$ and $Y=D\cap B'$, and let $J=B'\setminus Y$. Then $X$ is independent in $N$. Moreover, $D\subseteq A'\cup Y$ and $D$ spans $M'$, so $A'\cup Y$ spans $M'$. Hence $Y$ spans $M'/A'$, and therefore $J$ is independent in $K=(M'/A')^*$. Since $|Y|=r(M')-|X|$, one has $|J|=|B'|-r(M')+|X|$.

The basis $D$ lifts to exactly $2^{|X|+|J|}=2^{|B'|-r(M')+2|X|}$ bases of $M$. Summing over all bases of $M'$ and then forgetting the compatibility condition between $X$ and $J$ gives~\eqref{eq:basis-upper-bound}. Each coparallel class in the $2$-inflation increases rank by $1$, whereas a parallel class does not change rank, proving~\eqref{eq:rank-formulas}. Finally, substituting~\eqref{eq:rank-formulas} into~\cref{lem:bjoerner}, first for $M$ and then for $M^*$, gives~\eqref{eq:bjoerner-bounds}.
\end{proof}

It will be very useful to describe parts of the lattice of cyclic flats of an inflated matroid through its boundary matroids.

\begin{lem}\label{lem:subposet}
Let $M$ be inflated, $(M',A',B')$ its decorated core, and $N=M'|A'$ and $K=(M'/A')^*$ its boundary matroids. For $X\subseteq A'\mathbin{\dot\cup}B'$ denote by $\widehat X$ the union of classes indexed by elements of $X$.

The map $X\mapsto\widehat X$ reflects inclusion and maps every member of $\mathcal Z(M')$, every flat $X\in\mathcal F(N)$, and every set $A'\cup(B'\setminus F)$ with $F\in\mathcal F(K)$ to a cyclic flat of $M$. Hence these three families, with their actual comparabilities, form a single induced subposet of $\mathcal Z(M)$.

Moreover, if $B'=\emptyset$, then $\mathcal Z(M)\cong\mathcal F(N)$, and if $A'=\emptyset$, then $\mathcal Z(M)\cong\mathcal F(K)^*$.
\end{lem}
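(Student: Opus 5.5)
We work directly with cyclic sets of $M$ and use two structural facts about an inflated matroid. First, every circuit of $M$ that meets a coparallel class contains the whole class. Second, for a parallel class $P$ and a flat $F$ of $M$, either $P\subseteq F$ or $P\cap F=\emptyset$; moreover, a cyclic set meeting $P$ may be enlarged to contain $P$, since two parallel elements form a circuit. The first step is to show that cyclic flats of $M$ are unions of classes. For a flat, closure under parallelism shows that the $A$-side of a flat is a union of parallel classes. For a cyclic set, no element is a coloop of the restriction. If a cyclic set contains an element $b$ of a coparallel class, then $b$ lies in a circuit inside the set, and that circuit contains the whole class. Hence every cyclic flat has the form $\widehat X$, and we set $X=\pi(Z)$ for the set of indices. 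Because the map $X\mapsto\widehat X$ is injective on index sets, it reflects inclusion.

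The second step translates the conditions ``flat'' and ``cyclic'' for $\widehat X$ into conditions on $X$ in $M'$. The tool is the relation $M'=M\setminus(A\setminus A')/(B\setminus B')$. For a union of classes, we expect two correspondences. The set $\widehat X$ is a flat of $M$ iff $X$ is a flat of $M'$ and, for each coparallel index $b\in X$, the full class lies in $\widehat X$. This second condition is automatic for unions. The set $\widehat X$ is cyclic in $M$ iff each coparallel index in $X$ is not a coloop of $M'|X$; parallel classes of size $\ge2$ are automatically cyclic. To verify these, we use that contracting $B\setminus B'$ turns each coparallel class into its representative, and that circuits of $M$ correspond to circuits of $M'$ with coparallel representatives inflated to full classes, together with the $2$-circuits inside parallel classes.

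With these correspondences, we check the three families. For $Z\in\mathcal Z(M')$ both conditions hold directly. For a flat $X$ of $N=M'|A'$, note that $X\subseteq A'$ has no coparallel indices, so $\widehat X$ is cyclic. For flatness, we need $\operatorname{cl}_{M'}(X)\cap B'=\emptyset$. Here we use \cref{lem:boundary-simple}-type reasoning: if some $b\in B'$ were spanned by $X\subseteq A'$, then a circuit would meet the class of $b$ in $b$ alone, contradicting the first fact. Hence $\widehat X$ is a cyclic flat.

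The third family, $A'\cup(B'\setminus F)$ with $F$ a flat of $K=(M'/A')^*$, is handled by duality. The set $B'\setminus F$ is cyclic in $M'/A'$ exactly because $F$ is a flat of the dual, and $B'\setminus F$ is a flat of $M'/A'$ whenever $F$ is cyclic. That second part is not given, however, so instead we argue directly that $A'\cup(B'\setminus F)$ is a flat of $M'$: its complement $F$ is a union of cocircuits of $K$, hence of cocircuits of $M'$. We check cyclicity of the coparallel indices using the complement of a flat of the dual. This step, getting both flatness and cyclicity in the dual family, is where I expect the main obstacle. I would rely on the identity $\mathcal Z(M^*)=\{E\setminus Z: Z\in\mathcal Z(M)\}$, applied to $M^*$, which is inflated with the roles of $A$ and $B$ swapped. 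By the already proved case for $N$-type flats, this yields cyclic flats $\widehat{B'\cup X}$ of $M^*$'s corresponding family, and complementation gives the third family.

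Finally, the degenerate cases. If $B'=\emptyset$, then $M'=N$ is simple, and every cyclic flat is a union of parallel classes. Every flat of $N$ inflates to a cyclic flat, and conversely every cyclic flat $\widehat X$ has $X$ a flat of $N$. So $\mathcal Z(M)\cong\mathcal F(N)$. The case $A'=\emptyset$ follows by duality, giving the order-reversed lattice $\mathcal F(K)^*$.
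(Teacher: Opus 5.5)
Your overall architecture matches the paper: show that cyclic flats of $M$ are unions of classes, translate flatness and cyclicity of $\widehat X$ into conditions in $M'$, and get the third family by duality. However, the flatness translation in your second step is wrong, and your arguments for the second family and for the direct version of the third rest on it.

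The "only if" direction fails: $\widehat X$ can be a flat of $M$ without $X$ being a flat of $M'$. In particular, the claim $\operatorname{cl}_{M'}(X)\cap B'=\emptyset$ for $X\in\mathcal F(N)$ is false. Take $M$ to be the cycle matroid of the $4$-cycle $uvwx$ with the edges $uv$ and $vw$ doubled.
\begin{itemize}
\item The parallel classes are $\{a_1,a_1'\}$ and $\{a_2,a_2'\}$, and the coparallel class is $S_b=\{b,b'\}$ with $b=wx$, $b'=xu$.
\item The core $M'$ is the triangle $U_{2,3}$ on $\{a_1,a_2,b\}$, with $A'=\{a_1,a_2\}$ and $B'=\{b\}$.
\item $X=A'$ is a flat of $N\cong U_{2,2}$, and $\widehat{A'}$ is indeed a cyclic flat of $M$.
\item Yet $\operatorname{cl}_{M'}(A')=E'\ni b$.
\end{itemize}
Your justification conflates $M'$ with $M$. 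In $M'$ the class $S_b$ has collapsed to $b$, so a circuit meeting it in $b$ alone is no contradiction, and the lifted circuit in $M$ contains all of $S_b$.

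The same example refutes your direct third-family claim. Here $K=(M'/A')^*\cong U_{1,1}$ and $F=\{b\}\in\mathcal F(K)$, but $A'\cup(B'\setminus F)=A'$ is not a flat of $M'$. Your justification fails at two points. First, $F$ is a cocircuit of $K$ but not of $M'$, since cocircuits of $K$ are circuits of $M'/A'$. Second, in general a flat of $K$ need not even be a union of cocircuits of $K$.

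The missing observation is this: if $S_b\cap\widehat X=\emptyset$, then no element of $S_b$ lies in $\operatorname{cl}_M(\widehat X)$. A circuit through such an element contains the whole class, which has at least two elements. This is what your ``first fact'' actually proves when run in $M$ rather than in $M'$. So the correct criterion is that $\widehat X$ is a flat of $M$ if and only if $\operatorname{cl}_{M'}(X)\cap A'=X\cap A'$; the $B'$-part of $\operatorname{cl}_{M'}(X)$ is irrelevant. This is the criterion the paper uses, via $r_M(\widehat X)=r_{M'}(X)+\sum_{b\in X\cap B'}(|S_b|-1)$.

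With this criterion the three families go through as follows.
\begin{itemize}
\item The first family works as you say.
\item The second is immediate from $\operatorname{cl}_{M'}(X)\cap A'=\operatorname{cl}_N(X)=X$.
\item The third should be obtained purely by duality. $M^*$ is inflated with decorated core $((M')^*,B',A')$, whose first boundary matroid is $(M')^*|B'=(M')^*\setminus A'=(M'/A')^*=K$. So each $F\in\mathcal F(K)$ gives $\widehat F\in\mathcal Z(M^*)$, and complementation (\cref{obs:dual}) gives $\widehat{A'\cup(B'\setminus F)}\in\mathcal Z(M)$. This is not a set of the form $\widehat{B'\cup X}$, as you wrote.
\end{itemize}

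A minor point: reflecting inclusion follows because the classes are nonempty and pairwise disjoint, not from injectivity alone. Your treatment of cyclicity, of cyclic flats being unions of classes, and of the degenerate cases $B'=\emptyset$ and $A'=\emptyset$ is fine.
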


\begin{proof}
For $a\in A'$ and $b\in B'$, let $P_a$ and $S_b$ denote the corresponding parallel and coparallel classes of $M$.  
For a union $\widehat X$ of whole classes, the rank behavior under parallel and series extensions gives
$r_M(\widehat X)=r_{M'}(X)+\sum_{b\in X\cap B'}(|S_b|-1)$.

First let $X\in\mathcal Z(M')$. If $a\in A'\setminus X$, then $a\notin\operatorname{cl}_{M'}(X)$, so adjoining an element of $P_a$ to $\widehat X$ increases rank. If $b\in B'\setminus X$, adjoining one element of $S_b$ also increases rank by $1$. Thus $\widehat X$ is a flat of $M$. Every element in a selected class $P_a$ is non-coloop in $M|\widehat X$. If $b\in X\cap B'$, then $b$ is not a coloop of $M'|X$, so $r_{M'}(X)=r_{M'}(X\setminus\{b\})$; the same series-extension rank calculation shows that removing any one element of $S_b$ from $\widehat X$ changes the rank by $r_{M'}(X)-r_{M'}(X\setminus\{b\})=0$. Hence no element of a selected $S_b$ is a coloop either, and $\widehat X$ is cyclic.

Now let $X\in\mathcal F(N)$, so $X\subseteq A'$. The equality $\operatorname{cl}_{M'}(X)\cap A'=X$ shows exactly as above that $\widehat X$ is a flat of $M$, and it is cyclic because every one of its elements belongs to a nontrivial parallel class contained in $\widehat X$. This gives the copy of $\mathcal F(N)$. Applying the same argument to $M^*$ gives the upper family: under complementation, a flat $F$ of $K=(M'/A')^*$ corresponds to the cyclic flat indexed by $A'\cup(B'\setminus F)$.

Since the classes indexed by distinct elements of $E'$ are disjoint, $X\subseteq Y$ if and only if $\widehat X\subseteq\widehat Y$. Thus all comparabilities between the three families are preserved and reflected, so their union is an induced subposet of $\mathcal Z(M)$.

Finally, if $B'=\emptyset$, every flat of $M$ is a union of whole parallel classes and every such flat is cyclic. Hence the preceding correspondence gives $\mathcal Z(M)\cong\mathcal F(N)$. The case $A'=\emptyset$ follows dually.
\end{proof}

\section{Sparse lattices of cyclic flats}\label{sec:sparse-cyclic-flats}

We now introduce the main type of matroid class for the rest of the paper. 

\begin{defi}
Given a finite poset $P$, let $\mathcal C(P)$ be the class of matroids $M$ for which $\mathcal Z(M)$ does not contain an induced subposet isomorphic to $P$.
\end{defi}
A basic observation, going back to~\cite{Bry75,Ing77}, is the following.
\begin{obs}\label{obs:dual}
For every matroid $M$ on ground set $E$, complementation induces an order-isomorphism $\mathcal Z(M^*)\cong\mathcal Z(M)^*$: a cyclic flat $X$ of $M^*$ corresponds to the cyclic flat $E\setminus X$ of $M$.
\end{obs}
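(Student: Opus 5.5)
Observation~\ref{obs:dual} asks us to show that complementation $X\mapsto E\setminus X$ is a bijection from $\mathcal Z(M^*)$ onto $\mathcal Z(M)$. Since complementation reverses inclusion, that bijection is automatically an order-isomorphism $\mathcal Z(M^*)\cong\mathcal Z(M)^*$. The plan is to reduce everything to the standard duality between flats and cyclic sets. The key fact is the following: for a matroid $M$ on $E$, a set $X$ is a flat of $M$ if and only if $E\setminus X$ is a union of cocircuits of $M$, that is, a cyclic set of $M^*$.

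\emph{Step 1 (flats versus cyclic sets of the dual).} First I show that $X$ is a flat of $M$ if and only if $E\setminus X$ is cyclic in $M^*$. If $X$ is a flat and $e\notin X$, then $e\notin\operatorname{cl}_M(X)$. Hence there is a hyperplane $H\supseteq X$ with $e\notin H$, and $E\setminus H$ is a cocircuit containing $e$ and contained in $E\setminus X$. So $E\setminus X$ is a union of cocircuits, which means it is cyclic in $M^*$.

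For the converse, suppose $E\setminus X$ is a union of cocircuits and $e\in E\setminus X$. Take a cocircuit $C^*$ with $e\in C^*\subseteq E\setminus X$. Its complement is a hyperplane containing $X$ and avoiding $e$, so $e\notin\operatorname{cl}_M(X)$. Thus $X$ is a flat of $M$.

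\emph{Step 2 (the dual statement).} Applying Step 1 to $M^*$ and using $(M^*)^*=M$, I get that $X$ is a flat of $M^*$ if and only if $E\setminus X$ is cyclic in $M$. Equivalently, $X$ is cyclic in $M^*$ if and only if $E\setminus X$ is a flat of $M$.

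\emph{Step 3 (combining).} Let $X$ be a cyclic flat of $M^*$. By Step 2 applied to the flat property, $E\setminus X$ is cyclic in $M$. By Step 1 applied to the cyclic property, $E\setminus X$ is a flat of $M$. So $E\setminus X\in\mathcal Z(M)$. The same argument with the roles of $M$ and $M^*$ exchanged shows that complementation maps $\mathcal Z(M)$ into $\mathcal Z(M^*)$. Since complementation is an involution, it is a bijection between the two lattices, and it reverses inclusion, which proves the claim.

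The only point requiring care is Step 1, specifically the characterization of the closure $\operatorname{cl}_M(X)$ as the intersection of the hyperplanes containing $X$, together with the fact that cocircuits are exactly the complements of hyperplanes. Both are standard, so I expect no real obstacle.
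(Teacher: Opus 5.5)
Your proof is correct. The paper gives no proof of this observation; it only attributes it to~\cite{Bry75,Ing77}. Your argument (a set is a flat of $M$ exactly when its complement is a union of cocircuits of $M$, applied to both $M$ and $M^*$) is the standard justification behind that citation.

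One small simplification: Steps~1 and~2 are already equivalences, so complementation is directly a bijection $\mathcal Z(M^*)\to\mathcal Z(M)$. The separate pass with the roles of $M$ and $M^*$ exchanged is therefore not needed.
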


We first record the corresponding minor-closure statement.

\begin{lem}\label{lem:C(X)}
For every finite poset $P$, the class $\mathcal C(P)$ is minor-closed. If $P\cong P^*$, then $\mathcal C(P)$ is also closed under duality.
\end{lem}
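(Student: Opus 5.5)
The plan is to show that for every minor $N$ of $M$, the lattice $\mathcal Z(N)$ is isomorphic to an induced subposet of $\mathcal Z(M)$. Since being an induced subposet is transitive, $P\hookrightarrow\mathcal Z(N)$ then yields $P\hookrightarrow\mathcal Z(M)$. It suffices to treat single-element deletions, because contractions follow by duality via \cref{obs:dual}: $\mathcal Z(M/e)=\mathcal Z((M^*\setminus e)^*)\cong\mathcal Z(M^*\setminus e)^*$. An induced embedding $\mathcal Z(M^*\setminus e)\hookrightarrow\mathcal Z(M^*)\cong\mathcal Z(M)^*$ dualizes to an induced embedding $\mathcal Z(M/e)\hookrightarrow\mathcal Z(M)$.

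For deletion, define $\varphi:\mathcal Z(M\setminus e)\to\mathcal Z(M)$ by $\varphi(X)=\operatorname{cl}_M(X)$. The first step is to check that $\varphi(X)$ is a cyclic flat of $M$. It is a flat by construction. It is cyclic because $X$ is a union of circuits of $M\setminus e$, hence of $M$. Any element of $\operatorname{cl}_M(X)\setminus X$ lies in a circuit contained in $X$ together with that element. Hence $\operatorname{cl}_M(X)$ is a union of circuits.

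The second step is to show that $\varphi$ preserves and reflects order. It clearly preserves order. For reflection, note that $\operatorname{cl}_M(X)\setminus\{e\}=\operatorname{cl}_{M\setminus e}(X)=X$, since $X$ is a flat of $M\setminus e$. So $\varphi(X)\in\{X,X\cup e\}$, and $X=\varphi(X)\setminus\{e\}$. Therefore $\varphi(X)\subseteq\varphi(Y)$ implies $X\subseteq Y$, and $\varphi$ is an order embedding onto an induced subposet. I expect the only delicate point to be this verification that closure adds at most $e$ and remains cyclic, and it follows directly from the standard facts about closure in deletions.

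For the second assertion, suppose $P\cong P^*$ and $M\in\mathcal C(P)$. If $\mathcal Z(M^*)$ contained an induced copy of $P$, then by \cref{obs:dual}, $\mathcal Z(M)$ would contain an induced copy of $P^*\cong P$, a contradiction. Hence $M^*\in\mathcal C(P)$.
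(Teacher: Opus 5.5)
Your proposal is correct and follows essentially the same route as the paper. Both arguments use the map $X\mapsto\operatorname{cl}_M(X)$ from $\mathcal Z(M\setminus e)$ to $\mathcal Z(M)$, use the identity $\operatorname{cl}_M(X)\setminus\{e\}=X$ to get an order embedding onto an induced subposet, handle contraction via \cref{obs:dual}, and treat the self-dual case by the same observation; the only cosmetic difference is that you verify cyclicity uniformly by writing $\operatorname{cl}_M(X)$ as a union of circuits, whereas the paper splits into the cases $\operatorname{cl}_M(X)=X$ and $\operatorname{cl}_M(X)=X\cup\{e\}$.
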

\begin{proof}
It suffices to show that the cyclic-flat poset of a single-element deletion or contraction embeds as an induced subposet of $\mathcal Z(M)$. Let $e\in E(M)$ and $X\in\mathcal Z(M\setminus e)$. Since $X$ is a flat of $M\setminus e$, its closure in $M$ is either $X$ or $X\cup\{e\}$. In the first case $X$ is already a cyclic flat of $M$. In the second case $e\in\operatorname{cl}_M(X)$, so a circuit $C$ of $M$ satisfies $e\in C\subseteq X\cup\{e\}$; because $X$ is cyclic in $M\setminus e$, this shows that $X\cup\{e\}$ is cyclic as well. Thus
$\varphi(X)=\operatorname{cl}_M(X)$ belongs to $\mathcal Z(M)$.

Moreover, $\varphi(X)\cap(E(M)\setminus\{e\})=X$. Therefore $\varphi$ is injective and, for $X,Y\in\mathcal Z(M\setminus e)$, one has $\varphi(X)\subseteq\varphi(Y)$ if and only if $X\subseteq Y$. Hence $\mathcal Z(M\setminus e)$ is isomorphic to an induced subposet of $\mathcal Z(M)$. Applying the deletion statement to $M^*$ and using~\cref{obs:dual} gives the corresponding assertion for $M/e$. Thus $\mathcal C(P)$ is minor-closed. Finally, if $P\cong P^*$, then~\cref{obs:dual} also gives closure under duality.
\end{proof}

The preceding estimates also give a quantitative bound. 

\boundedposet*
\begin{proof}
By~\Cref{lem:C(X),lem:inductionbasis}, it is enough to consider a
$2$-inflated $M\in\mathcal C(P)$. Let $(M',A',B')$ be its decorated
core, with boundary matroids $N$ and $K$.

If a matroid has rank $a$, the closures of the subsets of a basis form
an induced copy of $\mathcal B_a$ in its lattice of flats. By
\cref{lem:subposet}, both $\mathcal F(N)$ and $\mathcal F(K)^*$ occur
as induced subposets of $\mathcal Z(M)$. Hence $r(N),r(K)<d$.

Put $a=r(N)$, $b=r(K)$, $p=|A'|$, $q=|B'|$, and $s=p+q$. In a
nonzero summand of~\eqref{eq:basis-upper-bound}, put
$j=b-a+i$. Then $i\leq a$, $j\leq b$, and
$b-a+2i=i+j\leq a+b$. Therefore

\[
T_M(1,1)
\leq 2^{a+b}\sum_i\binom pi\binom q{b-a+i}
\leq 2^{a+b+s}.
\]

Every loop- and coloop-free matroid $L$ satisfies
\[
T_L(2,0)T_L(0,2)\geq2^{|E(L)|}.
\]
For each connected component with rank and corank at least $2$, apply
\cref{lem:bjoerner} to the component and its dual. The rank-$1$ and
corank-$1$ cases are immediate, and the general case follows by
multiplicativity. Since $|E(M)|=2s$,

\[
\Phi(M)\leq
\frac{2^{2(a+b+s)}}{4^s}
=2^{2(a+b)}
\leq 2^{4d-4}.
\]

\end{proof}

Recall that, if $A_{k+1}$ is an antichain of size $k+1$ and $C_{k+1}$ a chain of size $k+1$, then $\mathrm{CW}(k)=\mathcal C(A_{k+1})$ and $\mathrm{CH}(k)=\mathcal C(C_{k+1})$. Further, for an integer $p$, define
\[
\Phi(p)=\max\{\Phi(\mathcal C(P))\mid |P|=p\}.
\]
Then:

\begin{cor}\label{cor:asymptotic-bounds}
We have

\[
\Phi(p)=2^{\Theta(p)},\qquad
\Phi(\mathrm{CH}(k))=2^{\Theta(k)},\qquad
\Phi(\mathrm{CW}(k))=k^{\Theta(1)}.
\]

\end{cor}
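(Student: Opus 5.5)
The plan is to prove the upper bounds by computing the $2$-dimension of the relevant posets and plugging it into \cref{thm:bounded-poset-quotient}. The lower bounds will come from direct sums of a single fixed counterexample of Beke, Cs\'aji, Csikv\'ari, and Pituk~\cite{BCCP24false}.

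\emph{Upper bounds.} Every poset $P$ embeds as an induced subposet of $\mathcal B_{|P|}$ via $x\mapsto\{y\in P: y\le x\}$, so $\dim_2(P)\le|P|$. \Cref{thm:bounded-poset-quotient} then gives $\Phi(p)\le 2^{4p-4}$. A chain $C_{k+1}$ embeds in $\mathcal B_k$ as $\emptyset\subset\{1\}\subset\cdots\subset[k]$, so $\dim_2(C_{k+1})\le k$ and $\Phi(\mathrm{CH}(k))\le 2^{4k-4}$. An antichain $A_{k+1}$ embeds in $\mathcal B_d$ as soon as $\binom{d}{\lfloor d/2\rfloor}\ge k+1$, by Sperner. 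This holds for some $d\le\log_2(k+1)+O(\log\log k)$, and certainly for some $d=O(\log k)$. Hence $\Phi(\mathrm{CW}(k))\le 2^{4d-4}=k^{O(1)}$.

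\emph{Lower bounds.} Fix a loop- and coloop-free matroid $M_0$ with $c:=\Phi(M_0)>1$; such a matroid exists by~\cite{BCCP24false}. Let $h_0=h(\mathcal Z(M_0))$ and $z_0=|\mathcal Z(M_0)|$. For $m\ge1$, let $M_m$ be the direct sum of $m$ copies of $M_0$. It is loop- and coloop-free, and $\Phi(M_m)=c^m$ by multiplicativity of the Tutte polynomial. The cyclic flats of a direct sum are exactly the unions of cyclic flats of the summands, so $\mathcal Z(M_m)\cong\mathcal Z(M_0)^m$ as posets. Consequently $h(\mathcal Z(M_m))=m(h_0-1)+1$ and $w(\mathcal Z(M_m))\le z_0^m$.

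Now take $m=\lfloor (k-1)/(h_0-1)\rfloor$. Then $M_m\in\mathrm{CH}(k)$ and $\Phi(\mathrm{CH}(k))\ge c^{m}=2^{\Omega(k)}$. Since $\mathrm{CH}(p-1)=\mathcal C(C_p)$ and $|C_p|=p$, this also yields $\Phi(p)\ge 2^{\Omega(p)}$. For width, take $m=\lfloor\log k/\log z_0\rfloor$. Then $M_m\in\mathrm{CW}(k)$ and $\Phi(\mathrm{CW}(k))\ge c^m=k^{\Omega(1)}$, the exponent being roughly $\log c/\log z_0$. Because the classes $\mathrm{CW}(k)$ and $\mathrm{CH}(k)$ are nested in $k$, the floors cause no trouble.

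The only step needing care is the identification $\mathcal Z(M_1\oplus M_2)\cong\mathcal Z(M_1)\times\mathcal Z(M_2)$. It is standard: a set $X_1\cup X_2$ is a flat if and only if each $X_i$ is a flat of $M_i$, and it is cyclic if and only if each $X_i$ is cyclic, since circuits of a direct sum lie in one summand. I expect no real obstacle here. The only subtlety is that one must use some fixed counterexample with $\Phi>1$, rather than an explicit family, in order to get the exponential rates.
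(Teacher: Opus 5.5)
Your proof is correct. The upper bounds are the paper's: you plug $\dim_2(P)\le|P|$, $\dim_2(C_{k+1})\le k$ and $\dim_2(A_{k+1})=O(\log k)$ into \cref{thm:bounded-poset-quotient}. One small attribution point: embedding $A_{k+1}$ into $\mathcal B_d$ once $\binom{d}{\lfloor d/2\rfloor}\ge k+1$ only uses the middle layer. Sperner's theorem is needed for the converse, which the upper bound does not use.

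For the lower bounds you take a genuinely different route. The paper uses the explicit family $U^{(2)}_{2n/3,n}$. It quotes \cite[Lemma~2.4]{BCCP24false} for $\Phi=2^{\Theta(n)}$, and reads off height $\Theta(n)$ and width $2^{\Theta(n)}$ from $\mathcal Z$, a truncated Boolean lattice with a top adjoined. You instead take $m$-fold direct sums of one fixed $M_0$ with $\Phi(M_0)=c>1$. Multiplicativity of $\Phi$ and $\mathcal Z(M_1\oplus M_2)\cong\mathcal Z(M_1)\times\mathcal Z(M_2)$ then give height $m(h_0-1)+1$, width at most $z_0^m$, and $\Phi=c^m$. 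Here $h_0,z_0\ge2$, since $\emptyset\neq E(M_0)$ are both cyclic flats; that is worth stating.

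What your argument buys is self-containment. It needs only the existence of a single counterexample, such as $U^{(2)}_{21,31}$, rather than the asymptotic formula for a whole family, and the product structure makes the height and width computations immediate. The price is very poor constants. For $U^{(2)}_{21,31}$, with $c\approx1.074$ and $h_0=22$, your height rate is $\log_2 c/21\approx0.005$, and your width exponent $\log c/\log z_0$ has $z_0$ of order $2^{31}$. The explicit family gives rates that the paper then optimizes in \cref{prop:mixed-uniform,prop:optimized-uniform}.

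Your closing remark that one must use a fixed counterexample rather than an explicit family to get exponential rates is inaccurate. The paper's family already has $\Phi$ growing exponentially in $n$, with height only linear in $n$.
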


\begin{proof}
The upper bounds follow from~\cref{thm:bounded-poset-quotient}. Indeed,
$\dim_2(P)\leq |P|$, $\dim_2(C_{k+1})=k$, and, by Sperner's theorem,
$\dim_2(A_{k+1})=\Theta(\log k)$. Hence

\[
\Phi(p)=2^{O(p)},\qquad
\Phi(\mathrm{CH}(k))=2^{O(k)},\qquad
\Phi(\mathrm{CW}(k))=k^{O(1)}.
\]

For the lower bounds, let $M_n=U^{(2)}_{2n/3,n}$ with $3\mid n$. By~\cite[Lemma~2.4]{BCCP24false},
\[
\Phi(M_n)=2^{\Theta(n)}.
\]
Moreover,

\[
\mathcal Z(M_n)\cong
\{X\subseteq[n]:|X|\leq2n/3-1\}\cup\{\hat1\}.
\]

Thus its height is $\Theta(n)$, it excludes a chain of order
$\Theta(n)$, and its width is
$\binom n{\lfloor n/2\rfloor}=2^{\Theta(n)}$.
Consequently,

\[
\Phi(p)=2^{\Omega(p)},\qquad
\Phi(\mathrm{CH}(k))=2^{\Omega(k)},\qquad
\Phi(\mathrm{CW}(k))=k^{\Omega(1)}.
\]

For each sufficiently large $p$, choose $n$ divisible by $3$ with $2n/3+1<p$ and $n=\Theta(p)$; then $M_n\in\mathcal C(C_p)$. Likewise, for each sufficiently large $k$, choose $n$ divisible by $3$ with $2n/3+1\leq k$ and $n=\Theta(k)$; then $M_n\in\mathrm{CH}(k)$. For cyclic width, choose the largest admissible $n$ with $\binom n{\lfloor n/2\rfloor}\leq k$; consecutive admissible values differ only by a bounded factor, so $n=\Theta(\log k)$. Together with the upper bounds, this proves the result.
\end{proof}

The classes $\mathcal C(P)$ are restrictive. For example, a class of loopless matroids that contains members of arbitrarily large rank and is closed under parallel extensions cannot be contained in any fixed $\mathcal C(P)$. Indeed, replace every element of a loopless matroid $Q$ by a parallel pair. For each flat $F$ of $Q$, the union of the pairs indexed by $F$ is a cyclic flat of this $2$-thickening, and these cyclic flats form an induced copy of $\mathcal F(Q)$. The closures of the subsets of a basis of $Q$ then give Boolean lattices of arbitrarily large dimension.

\section{Matroids of small width}\label{sec:width}
The general asymptotic bounds for $\mathrm{CW}(k)$ are given in~\cref{cor:asymptotic-bounds}. We now obtain sharper results for small cyclic width.

\begin{lem}\label{lem:width}
Let $M$ be inflated, with decorated core $(M',A',B')$ and boundary matroids $N=M'|A'$ and $K=(M'/A')^*$. Then
\[
w(\mathcal Z(M))\geq w(\mathcal Z(M')).
\]
If $A'\neq\emptyset$, then
\[
w(\mathcal Z(M))\geq
w(\mathcal F(N))+
w\bigl(\mathcal Z(M')_{\subseteq B'}\setminus\{\emptyset\}\bigr),
\]
and if $B'\neq\emptyset$, then
\[
w(\mathcal Z(M))\geq
w(\mathcal F(K))+
w\bigl(\mathcal Z(M')_{\supseteq B'}\setminus\{E'\}\bigr).
\]
\end{lem}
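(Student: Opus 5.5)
The plan is to use \cref{lem:subposet}: the map $X\mapsto\widehat X$ reflects inclusion and sends $\mathcal Z(M')$, $\mathcal F(N)$ and the sets $A'\cup(B'\setminus F)$ with $F\in\mathcal F(K)$ to cyclic flats of $M$, forming one induced subposet of $\mathcal Z(M)$. Hence every antichain in the union of these three families is an antichain of $\mathcal Z(M)$. The first inequality is immediate: the image of $\mathcal Z(M')$ is an induced copy, so $w(\mathcal Z(M))\ge w(\mathcal Z(M'))$.

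For the second inequality, assume $A'\neq\emptyset$. Take an antichain $\mathcal A_1$ of $\mathcal F(N)$ of size $w(\mathcal F(N))$, and an antichain $\mathcal A_2$ of $\mathcal Z(M')_{\subseteq B'}\setminus\{\emptyset\}$ of maximum size. Consider $\mathcal A_1\cup\mathcal A_2$, where both consist of subsets of $E'$ lying in the families of \cref{lem:subposet}. It suffices to check that no member of $\mathcal A_1$ is comparable to a member of $\mathcal A_2$.
\begin{itemize}
\item An $X\in\mathcal A_1$ satisfies $X\subseteq A'$.
\item A $Y\in\mathcal A_2$ satisfies $\emptyset\neq Y\subseteq B'$.
\end{itemize}
Since $A'\cap B'=\emptyset$, $Y\subseteq X$ is impossible because $Y\neq\emptyset$. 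Also $X\subseteq Y$ forces $X=\emptyset$.

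So the only danger is $X=\emptyset$, the bottom flat of $N$, which is comparable to everything. This step is the main obstacle. I would handle it by choosing $\mathcal A_1$ to avoid $\emptyset$. Since $N$ is simple (\cref{lem:boundary-simple}) and $A'\neq\emptyset$, the atoms (single points) of $N$ form an antichain of size $|A'|\ge1$. So if $w(\mathcal F(N))\ge 2$, a maximum antichain cannot contain the comparable-to-all element $\emptyset$. If $w(\mathcal F(N))=1$, the antichain $\{\{a\}\}$ for an $a\in A'$ works. In either case a maximum antichain avoiding $\emptyset$ exists, and the disjoint union is an antichain in $\mathcal Z(M)$ of the required size. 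Here we use that $\widehat X$, $\widehat Y$ are distinct and that inclusion is reflected. If $\mathcal Z(M')_{\subseteq B'}\setminus\{\emptyset\}$ is empty, its width is $0$ and the bound reduces to the previous case.

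The third inequality follows dually. Either apply the second to $M^*$, whose decorated core is $(M'^*,B',A')$ with boundary roles of $N$ and $K$ swapped, using \cref{obs:dual} to turn $\mathcal Z(M')_{\supseteq B'}\setminus\{E'\}$ into the corresponding family under complementation. Or argue directly: take a maximum antichain of flats $F\neq E(K)$ of $K$ (possible by simplicity of $K$, using coatoms), giving the sets $A'\cup(B'\setminus F)$, which all contain $A'$ properly or equal $A'\cup B'$ minus a nonempty set. Combine it with an antichain of cyclic flats $Z\supseteq B'$, $Z\neq E'$, of $M'$. Then $Z\subseteq A'\cup(B'\setminus F)$ fails since $F\neq\emptyset$ meets $B'\subseteq Z$. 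The reverse inclusion $A'\cup(B'\setminus F)\subseteq Z$ would force $Z\supseteq A'\cup B'=E'$. Both are excluded once $F\neq\emptyset$, which is the dual of avoiding the bottom element.
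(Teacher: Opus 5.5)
Your proof is correct and follows the paper's argument. Like the paper, you use the induced copy of $\mathcal Z(M')$ for the first bound. For the second, you combine a maximum antichain of $\mathcal F(N)$ avoiding $\emptyset$ with one of $\mathcal Z(M')_{\subseteq B'}\setminus\{\emptyset\}$, using disjoint supports. For the third, you use duality, and your identification of the core of $M^*$ as $(M'^*,B',A')$ with $N$ and $K$ swapped is right.

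One slip is in your ``direct'' alternative for the third bound. The flat that must be excluded is $F=\emptyset$, chosen via atoms of $K$, since it gives the top $E'$, which is comparable to everything. It is not $F=E(K)$ chosen via coatoms: $F=E(K)$ gives $A'$, which is harmless. Moreover, when $|B'|=1$, your literal choice would select exactly $F=\emptyset$. However, your own verification already uses $F\neq\emptyset$, and the duality route is complete anyway.
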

\begin{proof}
The first inequality follows from the induced copy of $\mathcal Z(M')$ in~\cref{lem:subposet}.

Suppose that $A'\neq\emptyset$. Choose a maximum antichain $\mathcal A$ of $\mathcal F(N)$ that does not contain $\emptyset$. Such a choice is always possible: if $w(\mathcal F(N))=1$, take $\mathcal A=\{A'\}$, while if the width is at least $2$, no maximum antichain can contain the least element. Let $\mathcal B$ be a maximum antichain of $\mathcal Z(M')_{\subseteq B'}\setminus\{\emptyset\}$. Under the joint embedding of~\cref{lem:subposet}, every member of $\mathcal A$ is indexed by a nonempty subset of $A'$, whereas every member of $\mathcal B$ is indexed by a nonempty subset of $B'$. Since $A'\cap B'=\emptyset$, no member of one family is comparable with a member of the other. Thus their images form an antichain of size $|\mathcal A|+|\mathcal B|$, proving the second inequality.

The third inequality is dual. Equivalently, choose a maximum antichain in the upper copy of $\mathcal F(K)^*$ that avoids its greatest element $E'$, and combine it with a maximum antichain of $\mathcal Z(M')_{\supseteq B'}\setminus\{E'\}$. A member of the first family contains all of $A'$ but not all of $B'$, while a member of the second contains all of $B'$ but is not $E'$, so the two families are cross-incomparable.
\end{proof}

For later use, let $M$ be a connected $2$-inflated matroid whose rank and corank are at least $2$. Let $N$ and $K$ be its boundary matroids. The rank of the core is then forced by $r(M')=r(N)+|E(K)|-r(K)$. Combining~\cref{lem:first-reduction-bound} with~\eqref{eq:lambda} gives the universal boundary estimate
\begin{equation}\label{eq:cw4-bound}
\begin{aligned}
\Phi(M)
&\leq
\frac{\left(\sum_i2^{|E(K)|-r(M')+2i}I_i(N)I_{|E(K)|-r(M')+i}(K)\right)^2}
{\lambda(r(M')+|E(K)|,|E(N)|+|E(K)|-r(M'))}\\
&\qquad{}\cdot
\frac{1}{\lambda(2|E(N)|+|E(K)|-r(M'),r(M'))}.
\end{aligned}
\end{equation}

\begin{prop}\label{thm:cyclicwidthfour}
The multiplicative Merino--Welsh inequality holds on $\mathrm{CW}(4)$.
\end{prop}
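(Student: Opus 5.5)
By \cref{lem:C(X)} the class $\mathrm{CW}(4)=\mathcal C(A_5)$ is minor-closed, so by \cref{lem:inductionbasis} it suffices to prove $\Phi(M)\le1$ for every $2$-inflated $M\in\mathrm{CW}(4)$. Since $\Phi$ is multiplicative under direct sums and $\mathcal Z$ of a direct sum is the product of the factors' lattices (whose width is at least the width of each factor), we may assume $M$ is connected. The cases where $r(M)\le1$ or $r^*(M)\le1$ give $U_{1,n}$ or $U_{n-1,n}$, for which $\Phi\le1$ is immediate. We therefore work with connected $2$-inflated $M$ of rank and corank at least $2$, decorated core $(M',A',B')$, and boundary matroids $N$ and $K$. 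These are simple by \cref{lem:boundary-simple}.

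\textbf{Step 1: bound the boundary matroids.} By \cref{lem:subposet}, $\mathcal F(N)$ and $\mathcal F(K)^*$ are induced subposets of $\mathcal Z(M)$. Hence $w(\mathcal F(N)),w(\mathcal F(K))\le4$. For a simple matroid the atoms form an antichain, so $|E(N)|,|E(K)|\le4$. Moreover, the rank-$a$ Boolean lattice sitting inside $\mathcal F(N)$ has width $\binom{a}{\lfloor a/2\rfloor}$, which forces $r(N),r(K)\le3$. A simple matroid on at most $4$ elements of rank $3$ either is $U_{3,3}$ or has width $\ge4$ at the middle level; the refinements of \cref{lem:width} prune further. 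Those inequalities also bound the nonempty part of $\mathcal Z(M')$ below $B'$ and above $B'$. This leaves a finite, explicit list of pairs $(N,K)$, each simple with at most four elements, together with the admissible values of $r(M')=r(N)+|E(K)|-r(K)$.

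\textbf{Step 2: apply the universal boundary estimate.} For each surviving pair, evaluate the right-hand side of \eqref{eq:cw4-bound} using the $f$-vectors $I_i(N)$ and $I_j(K)$, which are trivial to list for matroids on at most four elements, and the function $\lambda$ from \eqref{eq:lambda}. I expect the estimate to be $\le1$ for most pairs, since the denominator grows like $2^{r(M)+r^*(M)}=2^{|E(M)|}$ with additional linear factors in $|A'|+|B'|-r(M')$ and $r(M')$. The numerator, in contrast, only counts lifts of bases in a crude way.

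\textbf{Step 3: residual cores.} A few pairs will fail the coarse estimate, typically when $N$ or $K$ is empty or tiny, so that $M$ is close to a $2$-thickening. For these the core $M'$ has at most $|E(N)|+|E(K)|\le8$ elements. I would enumerate all decorated cores $(M',A',B')$ with $M'|A'=N$, $(M'/A')^*=K$ and $w(\mathcal Z(M))\le4$. Then I would compute $T_M$ exactly via \cref{lem:tutte-parallel}: each parallel pair is a factor $(1+y)$ applied on the contraction side, and dually for coparallel pairs. Concretely, $T_M$ is obtained from $T_{M'}$-type data by summing over subsets of $A'$ and $B'$, and we check $\Phi(M)\le1$ directly. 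The case $B'=\emptyset$ uses $\mathcal Z(M)\cong\mathcal F(N)$, so $M=N^{(2)}$ with $N$ simple of width at most $4$; these are handled by the same direct computation, and dually for $A'=\emptyset$.

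The main obstacle is Step 3. The coarse bound \eqref{eq:cw4-bound} discards the compatibility between $X$ and $J$ in the basis lifting, so for the extreme pairs it may exceed $1$. It is also not a priori clear that the width constraint on $\mathcal Z(M)$, rather than on $\mathcal Z(M')$, has been fully exploited. I would first try to sharpen the pruning with the second and third inequalities of \cref{lem:width}, so that only a handful of cores on at most $8$ elements remain. Those would then be verified by exact Tutte evaluation.
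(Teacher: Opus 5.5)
Your plan is essentially the paper's proof. It reduces to a connected $2$-inflated $M$ with rank and corank at least $2$, uses \cref{lem:width} and simplicity to restrict $N$ and $K$ to the seven simple matroids $U_{0,0},U_{1,1},U_{2,2},U_{2,3},U_{3,3},U_{2,4},U_{2,3}\oplus U_{1,1}$, and evaluates \eqref{eq:cw4-bound} on all $49$ ordered pairs. Since width exactly $4$ is allowed, $U_{2,4}$ and $U_{2,3}\oplus U_{1,1}$ must stay in that list.

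The one difference is that your Step~3 is never triggered. The coarse estimate is already strictly below $1$ for every pair, and the worst case is the mixed pair $N=K=U_{3,3}$ with bound $(245/246)^2$, not a near-thickening. So no residual cores need to be enumerated at width $4$.
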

\begin{proof}
The Tutte polynomial is multiplicative under direct sums, while the rank $1$ and corank $1$ cases are immediate. Since $\mathrm{CW}(4)$ is minor-closed by~\cref{lem:C(X)}, \cref{lem:inductionbasis} reduces the proof to a connected $2$-inflated matroid $M$. Let $N$ and $K$ be the boundary matroids of its decorated core.

By~\cref{lem:width}, both boundary lattices of flats have width at most $4$. Since $N$ and $K$ are simple by~\cref{lem:boundary-simple}, their singleton flats form antichains. Hence $|E(N)|,|E(K)|\leq4$. The possible boundary matroids are
\[
U_{0,0},\ U_{1,1},\ U_{2,2},\ U_{2,3},\ U_{3,3},\ U_{2,4},\ U_{2,3}\oplus U_{1,1}.
\]
Their $f$-vectors, in the same order, are $(1)$, $(1,1)$, $(1,2,1)$, $(1,3,3)$, $(1,3,3,1)$, $(1,4,6)$, and $(1,4,6,3)$.
Indeed, the assertion is immediate on at most $3$ elements. On four elements, a simple rank-three matroid either has a triangle, in which case it is $U_{2,3}\oplus U_{1,1}$, or every pair is a rank $2$ flat, giving six incomparable rank $2$ flats. The free rank $4$ matroid also has six rank $2$ flats. Thus no other simple matroid on four elements has flat-lattice width at most $4$.

For each of the $7^2=49$ ordered pairs $(N,K)$, substitute the forced value of $r(M')$ in~\eqref{eq:cw4-bound}. After discarding rank $1$ and corank $1$ cases, exact arithmetic gives
\begin{center}
\begin{tabular}{c|c}
$|E'|$ & maximum in~\eqref{eq:cw4-bound} \\ \hline
$2$ & $25/36$\\
$3$ & $4/7$\\
$4$ & $121/196$\\
$5$ & $1849/2655$\\
$6$ & $60025/60516$\\
$7$ & $279841/310284$\\
$8$ & $1481089/1602756$
\end{tabular}
\end{center}
All values are smaller than $1$. The global maximum occurs for $N=K=U_{3,3}$. Its $f$-vector is $(1,3,3,1)$. After the inflation factors in~\eqref{eq:basis-upper-bound}, the numerator before squaring is $1+6^2+12^2+8^2=245$, while both denominator factors are $\lambda(6,3)=246$. Hence $\Phi(M)\leq(245/246)^2<1$.
\end{proof}

\begin{prop}\label{thm:cyclicwidthfive}
The multiplicative Merino--Welsh inequality holds on $\mathrm{CW}(5)$.
\end{prop}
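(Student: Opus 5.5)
We follow the same scheme as in \cref{thm:cyclicwidthfour}. The class $\mathrm{CW}(5)$ is minor-closed by \cref{lem:C(X)}, and $T$ is multiplicative over direct sums. Components of rank or corank $1$ are immediate. So \cref{lem:inductionbasis} reduces the claim to a connected $2$-inflated matroid $M$ with $r(M),r^*(M)\geq2$. Let $(M',A',B')$ be its decorated core and $N,K$ its boundary matroids.

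By \cref{lem:width}, the lattices $\mathcal F(N)$ and $\mathcal F(K)$ have width at most $5$. By \cref{lem:boundary-simple}, $N$ and $K$ are simple, so their singletons form an antichain and $|E(N)|,|E(K)|\leq5$.

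\textbf{Step 1: list the boundary matroids.} We list the simple matroids on at most five elements whose lattice of flats has width at most $5$. These are the seven from the width-$4$ list together with the following:
\begin{itemize}
\item on four elements, none new, since every other simple matroid has six rank-$2$ flats;
\item on five elements, $U_{2,5}$, whose only antichain beyond the atoms is of size $1$;
\item rank-$3$ matroids on five elements with few lines, such as $U_{2,4}\oplus U_{1,1}$, $U_{2,3}\oplus U_{2,3}$ modulo simplicity, and the rank-$3$ matroid with two $3$-point lines sharing a point.
\end{itemize}
For each five-element candidate we count rank-$2$ flats. A rank-$3$ simple matroid on five points has at least $5$ lines unless most points lie on a few long lines, and one must check that the lines do not exceed $5$. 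Higher-rank candidates have too many rank-$2$ flats.

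\textbf{Step 2: evaluate the bound.} We record the $f$-vectors of all admissible candidates. For every ordered pair $(N,K)$ we substitute the forced value $r(M')=r(N)+|E(K)|-r(K)$ into the universal boundary estimate \eqref{eq:cw4-bound} and compute exactly.

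\textbf{Step 3: handle the residual cases.} Pairs where \eqref{eq:cw4-bound} already gives a value below $1$ are done. The main obstacle is the pairs where the coarse estimate exceeds $1$. I expect these to be pairs such as $N$ or $K$ equal to $U_{3,3}\oplus U_{1,1}$-like free-ish matroids of rank $4$, or $U_{4,5}$-type boundaries. $U_{4,4}$ is excluded, since its six rank-$2$ flats form an antichain, but near-free rank-$3$ and rank-$4$ boundaries remain. In these cases the upper bound \eqref{eq:basis-upper-bound} loses information because it forgets the compatibility between $X$ and $J$.

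For these residual pairs we would use \cref{lem:width} more finely. The antichain bounds $w(\mathcal F(N))+w(\mathcal Z(M')_{\subseteq B'}\setminus\{\emptyset\})\leq5$ and its dual restrict the core $M'$ itself. This leaves finitely many decorated cores on $|E'|\leq10$ elements, and in fact fewer after the width constraints. On each of them we compute $T_M(1,1)$, $T_M(2,0)$ and $T_M(0,2)$ exactly from the core, by lifting bases as in the proof of \cref{lem:first-reduction-bound} and using \cref{lem:tutte-parallel} for each pair. We then verify $\Phi(M)\leq1$ directly, if necessary with the catalogue of small matroids.
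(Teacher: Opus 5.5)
\textbf{There is a genuine gap: the two finite checks that constitute the proof are not carried out, and your Step~3 replaces them with a contingency that is not an argument.} Your reduction and overall route are the paper's: a connected $2$-inflated $M$, the bound $|E(N)|,|E(K)|\le5$ from \cref{lem:width} and \cref{lem:boundary-simple}, and exact substitution into \eqref{eq:cw4-bound}. But Step~1 is also partly wrong.

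\textbf{Step 1 (the classification).} The correct count goes as follows.
\begin{itemize}
\item In a simple rank-$3$ matroid on five points, the lines partition the ten pairs. Without a $4$-point line there are at most two $3$-point lines, since three would need six points. Hence there are at least six lines, and only $U_{2,4}\oplus U_{1,1}$ survives in rank~$3$.
\item In rank $4$, a line has at most three points, and two $3$-point lines would meet and span all five points in rank~$3$. So there are at least eight lines.
\item Rank $5$ gives ten lines.
\end{itemize}
So the only new boundaries are $U_{2,5}$ and $U_{2,4}\oplus U_{1,1}$, with $f$-vectors $(1,5,10)$ and $(1,5,10,6)$. Your ``rank-$3$ matroid with two $3$-point lines sharing a point'' has $2+4=6$ lines and is excluded. $U_{2,3}\oplus U_{2,3}$ has six elements. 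Over-inclusion would be logically harmless here: that matroid has $f$-vector $(1,5,10,8)$, and paired with any admissible boundary, or with itself, it still keeps \eqref{eq:cw4-bound} below $1$, the worst case being $(5797/6126)^2$. But the count has to be finished rather than left as ``one must check''.

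\textbf{Step 3 (the anticipated residual cases).} The pairs you expect to be residual involve $U_{3,3}\oplus U_{1,1}=U_{4,4}$ and $U_{4,5}$. Your own width argument excludes both, since they have six and ten rank-$2$ flats. In fact no simple rank-$4$ matroid on at most five elements is admissible, so the ``near-free rank-$4$ boundaries'' do not exist.

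Actually doing Step~2 closes the proof. There are $81$ ordered pairs, and the $49$ pairs without a five-element boundary were already handled in \cref{thm:cyclicwidthfour}. On the remaining $32$ pairs, \eqref{eq:cw4-bound} has maximum $534361/861212\approx0.62$. Up to duality this is attained at $N=U_{2,3}\oplus U_{1,1}$, $K=U_{2,4}\oplus U_{1,1}$, with numerator $2193$ before squaring and denominator factors $5106$ and $1518$. So there are no residual pairs, and the proof ends at Step~2.

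As written, however, your proposal leaves this verification undone and instead predicts it fails. The fallback you offer is not an argument: the ``finer'' use of \cref{lem:width} is never specified, and decorated cores on ten elements would lie outside the nine-element catalogue you invoke. The missing ingredient is exactly the exact evaluation showing that the coarse boundary estimate is already below one on every admissible pair.
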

\begin{proof}
Proceed as in~\cref{thm:cyclicwidthfour}. Now~\cref{lem:width} gives
\[
|E(N)|,|E(K)|\leq5,
\qquad
w(\mathcal F(N)),w(\mathcal F(K))\leq5.
\]
Besides the seven boundary matroids occurring for cyclic width $4$, the only new possibilities are
\[
U_{2,5}\qquad\text{and}\qquad U_{2,4}\oplus U_{1,1}.
\]
Their $f$-vectors are $(1,5,10)$ and $(1,5,10,6)$, respectively.
To see this, consider a simple matroid on $5$ elements. In rank $3$, the rank $2$ flats partition the $10$ pairs. If there is no $4$-point line, at most two $3$-point lines can occur, since $3$ distinct $3$-point lines would require at least six points. Hence there are at least six rank $2$ flats. A $4$-point line gives $U_{2,4}\oplus U_{1,1}$. In rank $4$, every rank $2$ flat has size at most $3$, and the same pair count again gives at least six rank $2$ flats. Ranks $2$ and $5$ give $U_{2,5}$ and the free matroid, respectively, and the latter has ten rank $2$ flats.

Thus only $9^2=81$ ordered boundary pairs occur. The $49$ pairs not involving a $5$-element boundary matroid were already checked in~\cref{thm:cyclicwidthfour}. Exact substitution in~\eqref{eq:cw4-bound} for the remaining $32$ pairs gives $534361/861212<1$ as the maximum. Up to duality it is attained for $N=U_{2,3}\oplus U_{1,1}$ and $K=U_{2,4}\oplus U_{1,1}$. The numerator before squaring is $2193$, while the denominator factors are $5106$ and $1518$. Hence every case satisfies the multiplicative inequality.
\end{proof}

The preceding two theorems remove all cyclic widths at most $5$ from any subsequent computation. We now use the same boundary estimate to isolate the remaining cases rather than as a final inequality. For boundary matroids $N$ and $K$, let $B(N,K)$ denote the right-hand side of~\eqref{eq:cw4-bound}, after substituting the forced core rank
\[
r(M')=r(N)+|E(K)|-r(K).
\]
For a boundary matroid $Q$, define its \emph{boundary data} by
\begin{equation}\label{eq:boundary-data}
\sigma(Q)=\bigl(|E(Q)|,r(Q),(I_j(Q))_{0\le j\le r(Q)}\bigr).
\end{equation}
Hence $B(N,K)$ depends only on $\sigma(N)$ and $\sigma(K)$. For width $k$, consider simple boundary matroids $Q$ with $w(\mathcal F(Q))\le k$. Let $\mathcal R_k$ be the set of ordered pairs of boundary data $(\sigma(N),\sigma(K))$ arising from such $N,K$, after discarding rank-$1$ and corank-$1$ cases and pairs with $B(N,K)\le1$. We call the members of $\mathcal R_k$ the \emph{residual boundary pairs}.

For $k=7$ the set $\mathcal R_7$ consists of $14$ residual boundary pairs. Their total boundary sizes, equivalently the possible core sizes, are $7$, $8$, and $9$, with multiplicities $4$, $8$, and $2$, respectively. In particular, every residual core lies within the nine-element range of the Mayhew--Royle catalogue~\cite{MR08,MR22data}.


Direct computation of the Tutte polynomial of a $2$-inflation may involve up to $18$ elements. The following formular evaluate the three required values directly from the core.

\begin{prop}
Let \(M_2\) be the 2-inflation of \(M\) with respect to
$E(M)=A\mathbin{\dot\cup}B$. Then
\[
T_{M_2}(1,1)
=
\sum_{X\in\mathcal B(M)}
2^{\,|X\cap A|+|B\setminus X|},
\]
\[
T_{M_2}(2,0)
=
\sum_{X\subseteq E(M)}
(-1)^{r_M(X)+|X|}
3^{\,|B\setminus X|},
\]
and
\[
T_{M_2}(0,2)
=
\sum_{X\subseteq E(M)}
(-1)^{r(M)-r_M(X)}
3^{\,|X\cap A|}.
\]
\end{prop}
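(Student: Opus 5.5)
Each identity will be read off the corank--nullity expansion of the Tutte polynomial of the $2$-inflation $M_2$, grouped by the \emph{trace} of a subset on the core. Write $E(M_2)$ as the disjoint union of pairs $P_a$ (parallel, $a\in A$) and $S_b$ (series, $b\in B$). For $Z\subseteq E(M_2)$, let $X\subseteq E(M)$ be the set of indices whose whole class lies in $Z$, and let $Y\subseteq B$ be the indices $b$ with $|Z\cap S_b|=1$.

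The rank rule is the one already used in the proof of \cref{lem:subposet}: a parallel class behaves like its representative as soon as it is hit, and a series pair contributes its full excess rank. Concretely, I will check that
\[
r_{M_2}(Z)=r_M(X\cup Y')+|X\cap B|+|Y|,
\]
where $Y'$ adds the indices $a$ with $|Z\cap P_a|=1$. Hitting a parallel class once already behaves like containing it. A single element of a series pair is a coloop-like free element: it adds $1$ to rank, and it adds nothing in $M$.

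\textbf{First identity.} This needs no expansion. A basis of $M_2$ meets each $P_a$ in at most one element and each $S_b$ in at least one. Its projection is a basis $X$ of $M$ with $a\in X$ iff $P_a$ is hit, and $b\in X$ iff $S_b$ is fully contained. The lifts therefore number $2^{|X\cap A|}$ choices in the parallel classes times $2^{|B\setminus X|}$ choices of the single element in the unsaturated series pairs. This is exactly the lifting count in the proof of \cref{lem:first-reduction-bound}, made precise; I take it as the model.

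\textbf{Second and third identities.} I will use
\[
T(x,y)=\sum_Z (x-1)^{r(E)-r(Z)}(y-1)^{|Z|-r(Z)}.
\]
At $(2,0)$ this gives $\sum_Z(-1)^{|Z|-r(Z)}$. I then sum over the fibre of each $X$. For $a\in A$, the three options (empty, one element, both elements) contribute with nullity increments. I will verify that the options with a nonempty intersection whose index lies outside $X$ cancel against the others, or reorganise into the stated form. For $b\in B\setminus X$, the three options (none or one of the two elements) each add $0$ to the nullity, giving the factor $3^{|B\setminus X|}$. The sign $(-1)^{|X|-r_M(X)}$ comes from the full classes.

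The step I expect to be delicate is the $A$-side bookkeeping. A parallel pair hit once has the same closure effect as hitting it twice, so I must group ``one element'' with ``index in $X$''. The two full-parallel configurations then differ by one unit of nullity and cancel in sign. I would handle this via the equivalent identity $T_{M_2}(x,0)=T_{\operatorname{si}}(x,0)$ (parallel elements do not affect $T(x,0)$), i.e.\ reduce to the series extension of $M$ only, where every factor is clean.

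The third identity then follows by duality. $M_2^*$ is the $2$-inflation of $M^*$ with $A$ and $B$ swapped. I apply the second identity to $M^*$ and substitute $r_{M^*}(X)=|X|-r(M)+r_M(E\setminus X)$. Finally I reindex by the complement, which turns $(-1)^{r^*(X)+|X|}3^{|A\setminus X|}$ into $(-1)^{r(M)-r_M(X')}3^{|X'\cap A|}$.
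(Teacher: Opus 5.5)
Your proposal is correct. For the first and third identities it matches the paper: you count lifts of bases of $M$, and you dualize using that $M_2^*$ is the $2$-inflation of $M^*$ with $A$ and $B$ swapped.

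For the second identity your route is different. The paper first discards the parallel mates. It then iterates the series-extension step $T_{M'}(2,0)=T_M(2,0)+2T_{M\setminus b}(2,0)$ to obtain $T_{M_2}(2,0)=\sum_{C\subseteq B}2^{|C|}T_{M\setminus C}(2,0)$. Finally it expands each $T_{M\setminus C}(2,0)$ by the subset formula and interchanges the sums, so that $\sum_{C\subseteq B\setminus X}2^{|C|}=3^{|B\setminus X|}$ appears. You instead apply the corank--nullity expansion once to the inflation and sum over fibres, using the rank function $r_{M_2}(Z)=r_M(X_A\cup X_B)+|X_B|+|Y|$; that formula is right.

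Your $A$-side bookkeeping needs tidying. The fibres must be indexed by the trace $X=X_A\cup X_B$ with $X_A=\{a: Z\cap P_a\neq\emptyset\}$, not by the set of whole classes. Each $P_a$ with $a\in X$ also has three nonempty options $\{a\},\{a'\},P_a$, with extra nullity $0,0,1$; not two. Alternatively, drop the parallel mates first, as you suggest.

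Once tidied, your route gives more than the paper's. The same fibre count yields the full identity
\[
T_{M_2}(x,y)=\sum_{X\subseteq E(M)}(x-1)^{r(M)-r_M(X)}(y-1)^{|X|-r_M(X)}(x+1)^{|B\setminus X|}(y+1)^{|X\cap A|},
\]
and all three evaluations are specializations of it, so duality is not even needed. It is also uniform with respect to loops and coloops.

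The paper's route avoids writing down the rank function of the inflation and needs only subset expansions of minors of $M$. The price is that the general form $T_{M'}=T_M+xT_{M\setminus b}$ of its one-step identity requires $b$ not to be a coloop. That can fail for $M\setminus C$, and the step survives at $x=2$ only because $2T=T+T$ there.
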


\begin{proof}
The first equality follows by counting the lifts of each basis of $M$, and the third follows from the second by duality. Hence, we focus on the second one. 

We first observe that adding a parallel element does not
change the evaluation at \((2,0)\). 
Therefore the parallel extensions corresponding to the elements of
\(A\) may be ignored when evaluating at \((2,0)\).

Now suppose that \(b'\) is added in series with \(b\). Then
deletion--contraction gives
\[
T_{M'}(x,y)
=
T_M(x,y)+x\,T_{M\setminus b}(x,y).
\]
Indeed, contracting \(b'\) recovers \(M\), while deleting \(b'\) makes
\(b\) a coloop. Consequently,
\[
T_{M'}(2,0)
=
T_M(2,0)+2T_{M\setminus b}(2,0).
\]

Applying this successively to all elements of \(B\), we obtain
\[
T_{M_2}(2,0)
=
\sum_{C\subseteq B}
2^{|C|}T_{M\setminus C}(2,0).
\tag{1}
\]

We now use the subset expansion of the Tutte polynomial:
\[
T_N(x,y)
=
\sum_{X\subseteq E(N)}
(x-1)^{r(N)-r_N(X)}
(y-1)^{|X|-r_N(X)}.
\]
At \((2,0)\), this becomes
\[
T_N(2,0)
=
\sum_{X\subseteq E(N)}
(-1)^{|X|-r_N(X)}.
\]
Since $(-1)^{|X|-r_N(X)}=(-1)^{|X|+r_N(X)}$, 
for \(N=M\setminus C\) we have
\[
T_{M\setminus C}(2,0)
=
\sum_{X\subseteq E(M)\setminus C}
(-1)^{|X|+r_M(X)}.
\]

Substituting this into \((1)\) gives
\[
T_{M_2}(2,0)
=
\sum_{C\subseteq B}
2^{|C|}
\sum_{X\subseteq E(M)\setminus C}
(-1)^{|X|+r_M(X)}.
\]
Interchanging the order of summation, for a fixed
\(X\subseteq E(M)\) the admissible sets \(C\) are precisely those
satisfying $C\subseteq B\setminus X$.

Hence
\[
T_{M_2}(2,0)
=
\sum_{X\subseteq E(M)}
(-1)^{|X|+r_M(X)}
\sum_{C\subseteq B\setminus X}2^{|C|}.
\]
Finally,
\[
\sum_{C\subseteq B\setminus X}2^{|C|}
=
(1+2)^{|B\setminus X|}
=
3^{|B\setminus X|}.
\]

\end{proof}

For the cyclic-width test, it is enough to inspect the unions of whole classes. If $X\subseteq E'$ and $\widehat X$ is the union of the classes indexed by $X$, then $\widehat X$ is a cyclic flat exactly when
\[
\operatorname{cl}_{M'}(X)\cap A'=X\cap A'
\quad\text{and}\quad
r_{M'}(X)=r_{M'}(X\setminus\{b\})\ \text{for every }b\in X\cap B'.
\]

\begin{rem}[Exact evaluation of a decorated core]\label{rem:cw-core-computation}
 The exact cyclic width and the three Tutte evaluations of a $2$-inflated matroid can be computed directly from the rank function of $M'$, without explicitly constructing a $2|E(M')|$-element matroid.

\end{rem}

\cyclicwidth*
\begin{proof}
Let $M\in\mathrm{CW}(7)$ be a minimum counterexample. By multiplicativity and~\cref{lem:inductionbasis}, we may assume that $M$ is connected and $2$-inflated. There are only the $14$ residual boundary pairs described above, so $|E(M')|\in\{7,8,9\}$.

We enumerate the compatible decorated cores with these residual boundaries, compute their exact cyclic width by~\cref{rem:cw-core-computation}, and retain only expansions of cyclic width six or seven. Exact evaluation of $T_M(1,1)$, $T_M(2,0)$, and $T_M(0,2)$ gives no failure of the multiplicative inequality. As a consistency check independent of the downloaded catalogue, a local orderly-generation implementation reconstructs the required matroids through nine elements and agrees with the Mayhew--Royle data~\cite{MR08,MR22data}. Since widths at most five are already covered by~\cref{thm:cyclicwidthfive}, no minimum counterexample exists.
\end{proof}
The computations used in the cyclic-width \(6\) and \(7\) cases were performed in SageMath 10.6. The source code, original exploratory notebooks, and reproducibility scripts are archived in~\cite{Merino2026CyclicWidth}.

The residual boundary pairs also make clear what would have to be strengthened in order to treat larger cyclic widths: beyond the range considered here, the boundary reduction need not close within the available small-matroid catalogue, so either additional catalogue data or further structural input may be required. We return to this point in~\cref{sec:discussion}.

\section{Matroids of small height}\label{sec:height}
The general asymptotic bounds for $\mathrm{CH}(k)$ are given in~\cref{cor:asymptotic-bounds}. We now obtain sharper results for small cyclic height.

\begin{lem}\label{lem:height-first-reduction}
Let $M$ be inflated, with decorated core $(M',A',B')$ and boundary matroids $N$ and $K$. Then
\[
h(\mathcal Z(M))\geq\max\{h(\mathcal Z(M')),r(N)+r(K)+1\}.
\]
\end{lem}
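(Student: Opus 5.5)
The plan is to read both lower bounds off the joint induced subposet of $\mathcal Z(M)$ given by \cref{lem:subposet}. The first bound, $h(\mathcal Z(M))\geq h(\mathcal Z(M'))$, is immediate. The map $X\mapsto\widehat X$ sends every member of $\mathcal Z(M')$ to a cyclic flat of $M$, and it reflects inclusion. So a longest chain in $\mathcal Z(M')$ maps to a chain of the same length in $\mathcal Z(M)$.

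For the second bound we build one explicit chain with $r(N)+r(K)+1$ elements. It has a lower part from the copy of $\mathcal F(N)$ and an upper part from the copy of $\mathcal F(K)^*$.

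Lower part: take a maximal chain of flats of $N$,
\[
\operatorname{cl}_N(\emptyset)=F_0\subsetneq F_1\subsetneq\cdots\subsetneq F_{a}=A',
\]
with $a=r(N)$. Since $N$ is simple by \cref{lem:boundary-simple}, $F_0=\emptyset$. This gives $a+1$ sets $F_i\subseteq A'$.

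Upper part: take a maximal chain of flats of $K$,
\[
\emptyset=G_0\subsetneq\cdots\subsetneq G_b=B',
\]
with $b=r(K)$. Here $G_0=\emptyset$ again because $K$ is simple. \cref{lem:subposet} turns each $G_j$ into the set $A'\cup(B'\setminus G_j)$. These $b+1$ sets form a chain that decreases in $j$, running from $A'\cup B'=E'$ at $j=0$ down to $A'$ at $j=b$.

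Gluing: the lower chain ends at $A'$ and the upper chain ends at $A'$, so $A'$ is counted twice. Concatenating gives
\[
\emptyset=F_0\subsetneq\cdots\subsetneq F_a=A'=A'\cup(B'\setminus G_b)\subsetneq\cdots\subsetneq A'\cup(B'\setminus G_0)=E'.
\]
This is a chain of $(a+1)+(b+1)-1=a+b+1$ distinct index sets. All of them lie in the single induced subposet of \cref{lem:subposet}, and $X\mapsto\widehat X$ preserves strict inclusion. Their images are therefore a chain of $a+b+1$ cyclic flats of $M$, which gives $h(\mathcal Z(M))\geq r(N)+r(K)+1$.

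The only point that needs care is the overlap at $A'$. It must be the endpoint of both chains, so that exactly one element is shared, rather than two separate sets that might fail to be comparable. Both families place $A'$ at their top or bottom, so the count comes out right.

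Degenerate cases are handled the same way. If $A'=\emptyset$, then $N=U_{0,0}$ has rank $0$ and the lower chain is just $\{\emptyset\}$, which coincides with $A'$. If $B'=\emptyset$, the upper chain is just $\{A'\}=\{E'\}$. The formula $a+b+1$ still holds in both situations.
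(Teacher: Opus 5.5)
Your proof is correct and follows the paper's argument. The first bound comes from the induced copy of $\mathcal Z(M')$, and the second from concatenating a maximal chain of $\mathcal F(N)$ with a maximal chain in the upper copy of $\mathcal F(K)^*$ at their common element $A'$, which gives $(r(N)+1)+(r(K)+1)-1$ cyclic flats (to transport chains you need that $X\mapsto\widehat X$ is injective and preserves inclusion rather than that it reflects inclusion, but the lemma gives both directions).
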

\begin{proof}
The first inequality follows from the induced copy of $\mathcal Z(M')$ in~\cref{lem:subposet}. For the second, take a maximal chain in $\mathcal F(N)$ from $\emptyset$ to $A'$ and concatenate it at $A'$ with a maximal chain in the upper copy of $\mathcal F(K)^*$ from $A'$ to $E'$. By the joint induced embedding in~\cref{lem:subposet}, this is a chain of cyclic flats of $M$. The two chains have respectively $r(N)+1$ and $r(K)+1$ elements and meet only at $A'$, giving $r(N)+r(K)+1$ elements in total.
\end{proof}

\begin{lem}\label{lem:height-bound}
Let $M$ be a connected $2$-inflated matroid, with decorated core $(M',A',B')$ and boundary matroids $N$ and $K$. If $r(M),r^*(M)\geq2$, then
\begin{equation}\label{eq:height-bound}
\begin{aligned}
\Phi(M)
&\leq
\frac{\left(\displaystyle\sum_{i=\max\{0,r(N)-r(K)\}}^{r(N)}
2^{r(K)-r(N)+2i}
\binom{|A'|}{i}
\binom{|B'|}{r(K)-r(N)+i}\right)^2}
{\lambda(r(N)+2|B'|-r(K),|A'|-r(N)+r(K))}\\
&\qquad{}\cdot
\frac{1}{\lambda(2|A'|-r(N)+r(K),|B'|+r(N)-r(K))}.
\end{aligned}
\end{equation}
\end{lem}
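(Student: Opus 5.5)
The plan is to specialise \cref{lem:first-reduction-bound}. Every quantity in it will be rewritten in terms of $r(N)$, $r(K)$, $|A'|$ and $|B'|$. Each independent set number of a boundary matroid will then be replaced by a binomial coefficient.

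First I compute the core rank. Since $E(N)=A'$ and $E(K)=B'$, and $N=M'|A'$, we have $r_{M'}(A')=r(N)$. Also $K=(M'/A')^*$ gives $r(M'/A')=|B'|-r(K)$. Hence
\[
r(M')=r_{M'}(A')+r(M'/A')=r(N)+|B'|-r(K),
\]
which is the forced value already used before~\eqref{eq:cw4-bound}. In particular $|B'|-r(M')=r(K)-r(N)$.

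For the numerator I substitute this into~\eqref{eq:basis-upper-bound}. The summand then reads $2^{r(K)-r(N)+2i}I_i(N)I_{r(K)-r(N)+i}(K)$. I bound $I_i(N)\le\binom{|A'|}{i}$ and $I_j(K)\le\binom{|B'|}{j}$, since independent sets are in particular subsets. The summand vanishes unless $0\le i\le r(N)$ and $0\le r(K)-r(N)+i\le r(K)$. This gives exactly the summation range $\max\{0,r(N)-r(K)\}\le i\le r(N)$ in~\eqref{eq:height-bound}. All terms are nonnegative, so squaring preserves the inequality and yields the numerator.

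For the denominator I use~\eqref{eq:bjoerner-bounds}, which applies because $M$ is connected with $r(M),r^*(M)\ge2$. By~\eqref{eq:rank-formulas} together with the formula for $r(M')$:
\begin{itemize}
\item $r(M)=r(M')+|B'|=r(N)+2|B'|-r(K)$;
\item $|A'|+|B'|-r(M')=|A'|-r(N)+r(K)$;
\item $r^*(M)=2|A'|+|B'|-r(M')=2|A'|-r(N)+r(K)$;
\item $r(M')=|B'|+r(N)-r(K)$.
\end{itemize}
These are precisely the arguments of the two $\lambda$ factors in~\eqref{eq:height-bound}. The second arguments are nonnegative, since they equal $|E(\operatorname{si}(M))|-r(M)$ and its dual analogue.

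It remains to check that we may divide by these lower bounds. For $r\ge2$ and $c\ge0$ we have $\lambda(r,c)\ge4(c+1)-4c+2=6>0$. So $T_M(2,0)T_M(0,2)$ is bounded below by the positive product of the two $\lambda$ values. Dividing the squared upper bound for $T_M(1,1)$ by this product gives~\eqref{eq:height-bound}.

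The only step needing care is the bookkeeping of $r(M')$ and of the index range. Beyond that, the argument is a direct substitution into results already established.
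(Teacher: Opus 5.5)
Your proposal is correct and follows the paper's proof: you compute the forced core rank $r(M')=r(N)+|B'|-r(K)$, bound $I_i(N)$ and $I_j(K)$ by binomial coefficients over the nonvanishing index range, and substitute into \eqref{eq:basis-upper-bound} and \eqref{eq:bjoerner-bounds}. Your extra bookkeeping (deriving $r(M')$, fixing the summation range before replacing $I_i$ by binomials, and checking $\lambda(r,c)\geq 6>0$) makes explicit what the paper leaves implicit, and you rightly note that the binomial bounds need only that independent sets are subsets, not that $N$ and $K$ are simple.
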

\begin{proof}
By~\cref{lem:boundary-simple}, the boundary matroids are simple, so $I_j(N)\leq\binom{|A'|}{j}$ and $I_j(K)\leq\binom{|B'|}{j}$. Moreover, the forced core-rank identity is $r(M')=r(N)+|B'|-r(K)$. Substitution in~\eqref{eq:basis-upper-bound} and~\eqref{eq:bjoerner-bounds} gives~\eqref{eq:height-bound}.
\end{proof}

\begin{prop}\label{thm:cyclicheight}
The multiplicative Merino--Welsh inequality holds on $\mathrm{CH}(4)$.
\end{prop}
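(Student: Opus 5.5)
We follow the template of \cref{thm:cyclicwidthfour}. The Tutte polynomial is multiplicative under direct sums, and the rank-$1$ and corank-$1$ cases are immediate. Since $\mathrm{CH}(4)=\mathcal C(C_5)$ is minor-closed by \cref{lem:C(X)}, \cref{lem:inductionbasis} reduces the statement to a connected $2$-inflated matroid $M\in\mathrm{CH}(4)$ with $r(M),r^*(M)\geq2$. Let $(M',A',B')$ be its decorated core, with boundary matroids $N$ and $K$. By \cref{lem:height-first-reduction} we have $r(N)+r(K)+1\leq4$, so $r(N)+r(K)\leq3$. Moreover $h(\mathcal Z(M'))\leq4$.

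The case distinction runs over the pairs $(r(N),r(K))$ with sum at most $3$. If $B'=\emptyset$, then $\mathcal Z(M)\cong\mathcal F(N)$ by \cref{lem:subposet}. In that case $M$ is the $2$-thickening of a simple matroid $N$ of rank at most $3$. Its Tutte values are controlled directly: $T_M(2,0)=T_N(2,0)$, $T_M(1,1)=\sum_{X\in\mathcal B(N)}2^{r(N)}$, and $T_M(0,2)=\sum_X(-1)^{r(N)-r_N(X)}3^{|X|}$ by the $2$-inflation formula. The case $A'=\emptyset$ is dual. In the mixed case, since $N$ and $K$ are simple, we have $I_i(N)\leq\binom{|A'|}{i}$ with $i\leq r(N)$, and similarly for $K$. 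The numerator of \eqref{eq:height-bound} is therefore a polynomial in $p=|A'|$ and $q=|B'|$ of degree at most $r(N)+r(K)\leq3$, up to the factor $2^{r(K)-r(N)+2i}$. The denominators $\lambda(\cdot,\cdot)$, by contrast, grow like $(\text{linear})\cdot 2^{\text{rank}}$, where the ranks $r(N)+2q-r(K)$ and $2p-r(N)+r(K)$ grow linearly in $q$ and $p$. So for each of the finitely many rank pairs, \eqref{eq:height-bound} drops below $1$ once $p+q$ exceeds an explicit small threshold. This leaves finitely many tuples $(r(N),r(K),p,q)$.

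For the remaining tuples I would argue in two ways. Where possible, I would check \eqref{eq:height-bound} exactly. Otherwise I would substitute the exact $f$-vectors of the few simple matroids with the given size and rank, using the sharper bound \eqref{eq:cw4-bound}. Examples are $U_{1,1}$, the matroids $U_{2,m}$, and rank-$3$ simple matroids on few points. In the one-sided situations, such as $r(K)=0$, the matroid $K$ is free, so the core is essentially $N$ with coloops attached in series classes; here I would use the explicit $2$-inflation formulas. An alternative is to note that height at most $4$ forces $r(M')$ to be small via the chain in $\mathcal Z(M')$, which makes the cores tiny and easy to enumerate by hand.

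The main obstacle is the threshold step for small ranks such as $(r(N),r(K))=(1,1)$ or $(0,3)$. There the slack in the $\lambda$ lower bound of \cref{lem:bjoerner} is thin. Thickened $U_{3,3}$-type configurations, like the extremal $(245/246)^2$ case in \cref{thm:cyclicwidthfour}, come very close to $1$, so the coarse bound \eqref{eq:height-bound} may exceed $1$ on a few residual tuples. For those tuples I would fall back to exact evaluation via the core formulas of the preceding proposition. The enumeration of cores there is small, since $h(\mathcal Z(M'))\leq4$ and $|E'|$ is bounded by the threshold.
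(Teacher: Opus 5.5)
Your reduction is the paper's: pass via \cref{lem:C(X),lem:inductionbasis} to a connected $2$-inflated $M$ with rank and corank at least $2$, get $r(N)+r(K)\le 3$ from \cref{lem:height-first-reduction}, and play the polynomial numerator of \eqref{eq:height-bound} against its exponential denominator. As written, however, the proposal never carries out the one step that contains the proof, namely evaluating \eqref{eq:height-bound}. Instead it plans for residual tuples, $f$-vector refinements and core enumerations that are never needed.

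The observation you are missing is that simplicity of $N$ and $K$ pins down the sizes. If $r(N)=0$ then $p=|A'|=0$, and if $r(N)=1$ then $p=1$; the same holds for $(r(K),q)$. The pairs $(1,0)$ and $(0,0)$ give only $U_{1,2}$ or the empty matroid. So, up to duality, the cases are $(3,0)$, $(2,0)$, $(2,1)$ with $q=1$, and $(1,1)$ with $p=q=1$. Each has at most one free parameter, and \eqref{eq:height-bound} becomes, respectively,
\[
\frac{64\binom p3^2}{(4p-2)(2^{2p-1}-10)},\qquad
\frac{2p^2(p-1)^2}{9(4^{p-1}-2)},\qquad
\frac{4p^2(2p-1)^2}{3(4p+6)(2^{2p-1}-2)},\qquad
\frac{25}{36}.
\]
Their maxima are $1600/2259$ (at $p=5$), $4/7$, $4/7$ and $25/36$. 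The coarse bound therefore closes every case by itself, and there are no residual tuples.

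Several side claims in your plan are also wrong and should be dropped.
\begin{itemize}
\item A simple matroid of rank $0$ is empty. So $r(K)=0$ means $B'=\emptyset$ and $M=N^{(2)}$; there are no coloops attached in series classes.
\item Your ``main obstacle'' does not arise. The near-extremal pair $N=K=U_{3,3}$ from \cref{thm:cyclicwidthfour} has $r(N)+r(K)=6$, so it cannot occur in $\mathrm{CH}(4)$. The pair $(1,1)$ is the single tuple $p=q=1$, with value $25/36$.
\item The condition $h(\mathcal Z(M'))\le 4$ does not force $r(M')$ to be small. For $(r(N),r(K))=(0,2)$ the core is $U_{q-2,q}$, whose cyclic-flat lattice is $\{\emptyset,E'\}$ for every $q\ge3$, while $r(M')=q-2$ is unbounded. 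That alternative route to bounding the enumeration fails; only the threshold coming from \eqref{eq:height-bound} bounds it.
\end{itemize}
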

\begin{proof}
By multiplicativity and the rank $1$ and corank $1$ cases, it is enough to consider connected matroids of rank and corank at least $2$. By~\Cref{lem:C(X),lem:inductionbasis}, we may further assume that $M$ is $2$-inflated. For the numerical case analysis only, abbreviate $a=r(N)$, $b=r(K)$, $p=|A'|$, and $q=|B'|$. By~\cref{lem:height-first-reduction}, $a+b\leq3$. Simplicity gives $p=0$ if $a=0$, $p=1$ if $a=1$, and $p\geq a$ if $a\geq2$, and similarly for $q,b$. Up to duality, the nontrivial rank pairs are $(3,0),(2,1),(2,0),(1,1)$.

For $(a,b)=(3,0)$,~\eqref{eq:height-bound} becomes
\[
\frac{64\binom p3^2}{(4p-2)(2^{2p-1}-10)}.
\]
Its maximum for $p\geq3$ is $1600/2259$, attained at $p=5$; comparison of consecutive terms shows that the sequence decreases for $p\geq5$. For $(a,b)=(2,0)$ the bound is $2p^2(p-1)^2/[9(4^{p-1}-2)]$, whose maximum is $4/7$ at $p=3$. For $(a,b)=(2,1)$, simplicity forces $q=1$, and the bound is $4p^2(2p-1)^2/[3(4p+6)(2^{2p-1}-2)]$, whose maximum is $4/7$ at $p=2$. Finally, $(a,b)=(1,1)$ forces $p=q=1$ and gives $25/36$. Every case is strictly below $1$, with global maximum $1600/2259$.
\end{proof}

\begin{prop}\label{thm:cyclicheightfive}
The multiplicative Merino--Welsh inequality holds on $\mathrm{CH}(5)$.
\end{prop}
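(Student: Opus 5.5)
We follow the proof of \cref{thm:cyclicheight}. By multiplicativity under direct sums and the trivial rank-$1$ and corank-$1$ cases, we may restrict to connected matroids of rank and corank at least $2$. Since $\mathrm{CH}(5)=\mathcal C(C_6)$ is minor-closed by \cref{lem:C(X)}, \cref{lem:inductionbasis} lets us assume that $M$ is $2$-inflated. Let $N,K$ be its boundary matroids and write $a=r(N)$, $b=r(K)$, $p=|A'|$, $q=|B'|$. By \cref{lem:height-first-reduction}, $a+b\leq4$. Simplicity of $N$ and $K$ (\cref{lem:boundary-simple}) gives $p=a$ for $a\leq1$ and $p\geq a$ otherwise, and likewise for $q$ and $b$. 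The pairs with $a+b\leq3$ are exactly those treated in \cref{thm:cyclicheight}, and that argument used only \eqref{eq:height-bound}, so it carries over verbatim. Up to duality, the new rank pairs are therefore $(a,b)\in\{(4,0),(3,1),(2,2)\}$.

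For each new pair, substitute into \eqref{eq:height-bound}. When $b\leq1$, $q=b$ is forced, so the bound is a one-variable function of $p$. For $(4,0)$ it equals
\[
\frac{256\binom p4^2}{\lambda(4,p-4)\,\lambda(2p-4,p-4)}.
\]
For $(3,1)$ there is a similar expression with $q=1$. In each case the denominator grows like $2^{2p}$ and the numerator only polynomially, so the bound tends to $0$. A comparison of consecutive terms should show monotone decrease beyond a small threshold, leaving a finite check of small $p$. For $(2,2)$ we obtain a two-variable bound in $(p,q)$ with $p,q\geq2$. Here the numerator is at most $2^{4}\bigl(\sum_i\binom pi\binom q{i}\bigr)^2\leq 2^4\binom{p+q}{p}^2$, while the denominator is at least exponential in $2(p+q)$. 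So only finitely many $(p,q)$ need explicit evaluation.

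The main obstacle is that the crude estimate \eqref{eq:height-bound}, which replaces $I_j(N)$ and $I_j(K)$ by binomial coefficients, may exceed $1$ for a few small parameter tuples. This is most likely in the balanced case $(2,2)$ with $p,q$ around $4$ or $5$, and in $(4,0)$ near $p=6$. For these residual tuples I would first try the sharper boundary estimate \eqref{eq:cw4-bound}, using actual $f$-vectors. A simple rank-$2$ matroid on $p$ elements is $U_{2,p}$, so that case is exact. In rank $3$ or $4$ on few elements the simple matroids can be listed. If some tuple still survives, the total core size $p+q$ is small, at most about $9$. In that case we enumerate the compatible decorated cores from the Mayhew--Royle catalogue and evaluate $T_M(1,1)$, $T_M(2,0)$, $T_M(0,2)$ exactly from the core via the $2$-inflation formulas of the preceding proposition, discarding cores whose inflation has cyclic height above $5$, as in the proof of \cref{thm:cyclicwidthseven}. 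Either route shows that every case gives $\Phi(M)\leq1$.
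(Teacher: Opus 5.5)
Your plan is correct and is essentially the paper's proof. It uses the same reduction to the new rank pairs $(4,0),(3,1),(2,2)$ and the same evaluation of \eqref{eq:height-bound}. It then checks the few surviving thickened cores exactly. The paper finds that the only surviving tuples are $(a,b,p,q)=(4,0,6,0),(4,0,7,0)$ and their duals, and checks the $45$ simple connected rank-$4$ matroids on $6$ or $7$ elements via the thickening identity, which computes the same three values as your inflation formulas.

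There is one substitution slip. In the $(4,0)$ case the second denominator factor is $\lambda(2p-4,4)=5\cdot 2^{2p-4}-14$, not $\lambda(2p-4,p-4)$. Your version would also leave $p=5$ as a residual, which your enumeration would absorb anyway. Relatedly, the $(2,2)$ numerator bound should carry $2^8$ rather than $2^4$, and that case in fact leaves no residual tuples at all, since its maximum is $121/196$.
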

\begin{proof}
By~\Cref{lem:C(X),lem:inductionbasis}, it is enough to consider a connected $2$-inflated matroid. Abbreviate $a=r(N)$, $b=r(K)$, $p=|A'|$, and $q=|B'|$. By~\cref{lem:height-first-reduction}, $a+b\leq4$. Here the already proved result~\cref{thm:cyclicheight} disposes of every case with $a+b\leq3$. Thus, up to duality, the only new rank pairs are $(4,0),(3,1),(2,2)$.

For $(a,b)=(4,0)$,~\eqref{eq:height-bound} becomes
\[
\frac{256\binom p4^2}{(12p-30)(5\cdot2^{2p-4}-14)}.
\]
Direct comparison of consecutive terms shows that the only values at least $1$ are $p=6$ and $p=7$, where the bounds are $1600/1477$ and $78400/68931$, respectively. For $(a,b)=(3,1)$, simplicity gives $q=1$, and the bound is
\[
\frac{\left(4\binom p2+16\binom p3\right)^2}{(12p-6)(2^{2p}-10)},
\]
whose maximum is $1936/2583<1$ at $p=4$.

For $(a,b)=(2,2)$,~\eqref{eq:height-bound} is
\[
\frac{\left(1+4pq+16\binom p2\binom q2\right)^2}{\lambda(2q,p)\lambda(2p,q)}.
\]
Assume by duality that $p\geq q$. If $q\geq5$, then $1+4pq+16\binom p2\binom q2<4p^2q^2$, while $\lambda(2q,p)>p(4^q-4)$ and $\lambda(2p,q)>q(4^p-4)$. Since $4^m-4>4m^3$ for $m\geq5$, the quotient is strictly smaller than $1$. For $q=2,3,4$, substituting the fixed value of $q$ and comparing consecutive terms gives a decreasing sequence for $p\geq q$; the largest of these sequences is $121/196<1$, attained at $p=q=2$. Consequently only the one-sided cases $(a,b,p,q)=(4,0,6,0)$ and $(4,0,7,0)$, together with their duals, remain.

In these cases $B'=\emptyset$, so $M'=N$. By~\cref{lem:subposet}, the cyclic-flat poset of the $2$-thickening is the lattice of flats $\mathcal F(N)$, which has height $5$. Let $N^{(2)}$ denote the matroid obtained by replacing each element of $N$ by a parallel pair. Grouping subsets of $E(N^{(2)})$ according to their support in $E(N)$ in the rank-sum definition of the Tutte polynomial gives
\begin{equation}\label{eq:thickening-tutte}
T_{N^{(2)}}(x,y)=(1+y)^{r(N)}T_N\left(1+\frac{x-1}{1+y},y^2\right).
\end{equation}
Since $r(N)=4$, the multiplicative inequality for $N^{(2)}$ is equivalent to
\begin{equation}\label{eq:ch5-core-check}
256T_N(1,1)^2\leq81T_N(2,0)T_N(2/3,4).
\end{equation}
It remains to check~\eqref{eq:ch5-core-check} for simple connected rank-$4$ matroids on six or seven elements. We reconstructed these matroids independently: there are $6$ isomorphism classes on six elements and $39$ on seven elements, in agreement with the Mayhew--Royle catalogue~\cite{MR08,MR22data}. Exact evaluation of the three Tutte values verifies~\eqref{eq:ch5-core-check} in every case. This completes the proof.
\end{proof}

\begin{rem}[The finite verification]\label{rem:ch5-verification}
The three evaluations in~\eqref{eq:ch5-core-check} can be accumulated in a single pass over the $2^n$ subsets of $E(N)$, with $n\in\{6,7\}$. Among the $45$ matroids checked, the largest value of
\[
\frac{256T_N(1,1)^2}{81T_N(2,0)T_N(2/3,4)}
\]
is $720/1937\approx0.371709$, attained by $U_{4,6}$. For this matroid,
\[
T_N(1,1)=15,\qquad T_N(2,0)=52,\qquad T_N(2/3,4)=\frac{2980}{81},
\]
so the required comparison is $57600<154960$.
\end{rem}

The proof above show that for height and for fixed boundary ranks $(a,b)$ the numerator in~\eqref{eq:height-bound} grows only polynomially in the boundary sizes, while the denominator grows exponentially. Hence
at every fixed cyclic height, only finitely many boundary-size parameter tuples remain. For cyclic height six, most of the remaining cases are deal with as above, but for some final cases we need to study a system of linear inequalities on the coefficients of the Tutte polynomial.

\cyclicheight*
\begin{proof}
Let $M\in\mathrm{CH}(6)$ be a minimum counterexample. As before, we may assume that $M$ is connected and $2$-inflated. Put
\[
a=r(N),\qquad b=r(K),\qquad p=|A'|,\qquad q=|B'|.
\]
By~\cref{lem:height-first-reduction}, $a+b\le5$, while~\cref{thm:cyclicheightfive} settles every case with $a+b\le4$. Thus, up to duality, only the rank pairs
\[
(5,0),\qquad(4,1),\qquad(3,2)
\]
are new. Substitution in~\eqref{eq:height-bound} eliminates every parameter tuple with $(a,b)=(3,2)$. For the other two rank pairs, the only parameter tuples not ruled out by the coarse bound are
\begin{equation}\label{eq:ch6-residual-parameters}
(5,0,p,0),\quad 7\le p\le11,
\qquad\text{and}\qquad
(4,1,p,1),\quad5\le p\le8.
\end{equation}
For $p\ge12$ in the $(5,0,p,0)$ case the same bound is already strictly below one.

First consider $(4,1,p,1)$. Restoring the actual independent-set numbers in the universal boundary estimate leaves $21$ boundary matroids. Every compatible core has at most nine elements. After an exact catalogue study and cyclic-height test, the  remaining number of decorated cores are $3294$ on $6,7,8$ or $9$ elements. All of them satisfy the inequality.

It remains to treat the $(5,0,p,0)$ cases. Here $B'=\emptyset$, so $M'=N$ and $M=N^{(2)}$ is the $2$-thickening of a simple connected rank-five matroid $N$. For $p=7,8,9$, a catalogue search gives  respectively $22$, $217$, and $188936$ cases, all of which satisfy the multiplicative inequality. Thus only $p=10,11$ lie beyond the available nine-element catalog.

For these two sizes the $2$-thickening identity~\eqref{eq:thickening-tutte} gives
\[
T_M(1,1)=32T_N(1,1),\qquad
T_M(2,0)=T_N(2,0),\qquad
T_M(0,2)=243T_N(2/3,4).
\]
Since $N$ is simple of rank five, $g(N)\ge3$. The rank-chain bound of ~\cref{thm:large-girth} therefore gives $h(\mathcal Z(N))\le5$, so~\cref{thm:cyclicheightfive} gives
\begin{equation}\label{eq:ch6-rank5-mw}
T_N(1,1)^2\le T_N(2,0)T_N(0,2).
\end{equation}
It is therefore enough to prove
\begin{equation}\label{eq:ch6-auxiliary}
T_N(2/3,4)\ge\left(\frac43\right)^5T_N(0,2).
\end{equation}
Indeed, multiplying~\eqref{eq:ch6-auxiliary} by $243T_N(2,0)$ and using~\eqref{eq:ch6-rank5-mw} gives
\[
T_M(2,0)T_M(0,2)\ge T_M(1,1)^2.
\]

Write $$T_N(x,y)=\sum_{i,j}t_{ij}x^iy^j.$$
We use coefficient nonnegativity, the generalized Brylawski identities

\begin{equation}\label{eq}
\sum_{i=0}^{h}\sum_{j=0}^{h-i}
\binom{h-i}{j}(-1)^jt_{ij}=0,
\qquad 0\le h<|E(N)|,
\end{equation}
see for example~\cite{BCCP24perm}, and the boundary coefficients for a simple connected rank-five matroid,$$
t_{5,0}=1,\qquad t_{4,0}=|E(N)|-5,\qquad
t_{0,|E(N)|-5}=1.$$
Since the difference between the two sides of~\eqref{eq} is linear in the coefficients $t_{ij}$, we formulate the required inequality as a linear program subject to these constraints. For $|E(N)|=10$ and $11$, exact rational Farkas certificates, together with the bound $T_N(0,2)\le 2^{|E(N)|}$ from the subset expansion, give the following lower bounds:
\begin{center}
\begin{tabular}{c|c}
$|E(N)|$ & certified lower bound for
$\displaystyle\frac{T_N(2/3,4)}{(4/3)^5T_N(0,2)}$\\ \hline
$10$ & $335023/262144$\\
$11$ & $1693215/1048576$
\end{tabular}
\end{center}
Both bounds are greater than $1$, proving~\eqref{eq:ch6-auxiliary}. The exact dual multipliers and a verifier using rational arithmetic are achieved with the computational material in \cite{Merino2026CyclicWidth}. This settles the last parameter tuples in~\eqref{eq:ch6-residual-parameters} and proves the theorem.
\end{proof}

The computations used in the cyclic-height $4$, $5$ and $6$ were performed in Python 3 using only the standard library. The source code and reproducibility scripts are archived in~\cite{MartinezSandoval2026CyclicHeight}.

\subsection{Matroids of small deficiency}\label{sec:deficiency}

For a nonempty loop- and coloop-free matroid $M$, let $g(M)$ and $g^*(M)=g(M^*)$ denote its girth and cogirth. We call
\[
\delta_g(M)=r(M)-g(M),\qquad
\delta_g^*(M)=r^*(M)-g^*(M)
\]
the \emph{girth deficiency} and \emph{cogirth deficiency}. Following Rajpal~\cite{Raj98}, a rank-$r$ matroid is $k$-paving if all its circuits have cardinality greater than $r-k$. Thus paving matroids are the $1$-paving matroids, while $4$-paving matroids are exactly those with $g(M)\ge r(M)-3$. See also~\cite{Sin25} for recent work on the $k$-paving hierarchy.

\begin{lem}\label{thm:large-girth}
If $M$ is loop- and coloop-free, then
\[
M\in\mathrm{CH}\bigl(3+\min\{\delta_g(M),\delta_g^*(M)\}\bigr).
\]
\end{lem}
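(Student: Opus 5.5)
We need to show $h(\mathcal Z(M))\le 3+\min\{\delta_g,\delta_g^*\}$. By \cref{obs:dual}, $\mathcal Z(M^*)\cong\mathcal Z(M)^*$ has the same height, and $\delta_g^*(M)=\delta_g(M^*)$. So it suffices to prove $h(\mathcal Z(M))\le 3+\delta_g(M)=r(M)-g(M)+3$ for every loop- and coloop-free $M$, and then apply this to both $M$ and $M^*$.

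The argument is a rank count along a chain. Let $Z_0\subsetneq Z_1\subsetneq\cdots\subsetneq Z_m$ be a chain of cyclic flats of $M$; we must show $m+1\le r-g+3$. The ranks along the chain are strictly increasing, because two distinct flats of equal rank cannot be nested. The bottom element satisfies $r(Z_0)\ge0$. Every nonempty cyclic set contains a circuit, so $Z_1$ contains a circuit; a circuit of size at least $g$ has rank at least $g-1$, hence $r(Z_1)\ge g-1$. At the top, $r(Z_m)\le r$. If $Z_{m-1}$ has rank $r$, it is a flat of full rank and so equals $E$, contradicting $Z_{m-1}\subsetneq Z_m$. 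Hence $r(Z_{m-1})\le r-1$.

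Combining these, strict increase from index $1$ to index $m-1$ gives $(m-2)+(g-1)\le r(Z_{m-1})\le r-1$, so $m\le r-g+2$ and the chain has at most $r-g+3$ elements. This assumes $m\ge2$. When $m\le1$ the chain has at most $2$ elements, so we need $2\le r-g+3$, i.e. $g\le r+1$. This holds whenever $M$ has a circuit, and $M$ does since it is coloop-free and nonempty. Empty $M$ is excluded by the definition of $\delta_g$.

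The only step that needs real care is the lower bound $r(Z_1)\ge g-1$, that is, making sure the first nonempty cyclic flat already has large rank. This follows from the fact that a nonempty cyclic set is a union of circuits and each circuit has rank one less than its size. Edge cases to check are $g=1$ (excluded, since $M$ is loopless) and the requirement that the chain's top two elements be distinct flats. Once these are handled, the bound for $M^*$ gives the $\delta_g^*$ term, and the lemma follows by taking the minimum.
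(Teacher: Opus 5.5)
Your proof is correct and follows essentially the same route as the paper: a rank count along a chain of cyclic flats, using $r(Z_1)\ge g(M)-1$ because $Z_1$ contains a circuit, $r(Z_{h-2})\le r(M)-1$ because it is a proper flat, and strict rank increase in between, followed by the complementation anti-isomorphism $\mathcal Z(M^*)\cong\mathcal Z(M)^*$ to obtain the $\delta_g^*$ term. Your explicit check of chains with at most two elements, which needs $g(M)\le r(M)+1$, is the case the paper dismisses as immediate when $\mathcal Z(M)=\{\emptyset,E(M)\}$.
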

\begin{proof}
We show that
\[
h(\mathcal Z(M))\leq3+\min\{\delta_g(M),\delta_g^*(M)\}.
\]
If $\mathcal Z(M)=\{\emptyset,E(M)\}$, the assertion is immediate. Otherwise take a longest chain
\[
\emptyset=Z_0\subsetneq Z_1\subsetneq\cdots\subsetneq Z_{h-2}\subsetneq Z_{h-1}=E(M)
\]
in $\mathcal Z(M)$, where $h=h(\mathcal Z(M))$. Since $Z_1$ is a nonempty cyclic flat, it contains a circuit and hence $r(Z_1)\geq g(M)-1$. Since $Z_{h-2}$ is a proper flat, $r(Z_{h-2})\leq r(M)-1$. Rank strictly increases along strict inclusions of flats, so $g(M)-1+(h-3)\leq r(M)-1$. Thus $h\leq r(M)-g(M)+3=\delta_g(M)+3$.

By~\cref{obs:dual}, complementation gives an order anti-isomorphism between $\mathcal Z(M)$ and $\mathcal Z(M^*)$, so they have the same height. Applying the preceding argument to $M^*$ gives $h\leq\delta_g^*(M)+3$.
\end{proof}

Combining~\cref{thm:large-girth,thm:cyclicheightsix} gives the following.


\fourpaving*


\section{Discussion and open problems}\label{sec:discussion}

\subsubsection*{Lattice-theoretic extensions}
It is natural to ask for an analogue of~\cref{thm:bounded-poset-quotient} with induced subposets replaced by sublattices. Such a statement cannot hold for every fixed lattice $L$: the cyclic-flat lattices of the counterexamples $U^{(2)}_{2n/3,n}$ have every proper principal ideal Boolean, and hence exclude $M_3^+$ and $N_5^+$ as sublattices, where $M_3^+$ and $N_5^+$ are obtained by adjoining a new maximum to $M_3$ and $N_5$, respectively, while their Merino--Welsh quotients are unbounded. On the other hand, for sufficiently large $n$ these lattices do contain $M_3$ and $N_5$, suggesting the following question.

\begin{quest}
Is $\Phi(M)$ uniformly bounded over loop- and coloop-free matroids $M$ for which $\mathcal Z(M)$ is distributive?
\end{quest}

One obstacle is that the property of excluding a fixed sublattice from the lattice of cyclic flats is not in general minor-closed, so \cref{lem:C(X)} does not generalize to sublattices. Indeed, let \(B_t\) be the \emph{\(t\)-page book graph}, consisting of \(t\) triangles sharing a common edge \(e\), and let \(M_t=M(B_t)\). Its cyclic-flat lattice is Boolean: \(\mathcal Z(M_t)\cong\mathcal B_t\). On deleting \(e\), however,

\[
\mathcal Z(M_t\setminus e)\cong
\{\varnothing\}\cup\{I\subseteq[t]:|I|\ge2\},
\]

ordered by inclusion. For \(t=3\) this lattice is \(M_3\). For \(t=4\), the five elements

\[
\varnothing,\qquad \{1,2\},\qquad \{1,2,3\},\qquad
\{3,4\},\qquad [4]
\]

form a sublattice isomorphic to \(N_5\). Thus neither the class of matroids with \(M_3\)-free cyclic-flat lattice nor that with \(N_5\)-free cyclic-flat lattice is minor-closed; in particular, the class whose cyclic-flat lattice is distributive is not minor-closed either.

\subsubsection*{Asymptotic behaviour}

The lower bounds in~\cref{cor:asymptotic-bounds} use only the examples of~\cite{BCCP24false}. It remains to determine the precise growth rates. Two simple refinements improve the lower-bound constants.

There is a natural self-dual modification of the counterexamples which improves the lower bound for cyclic width. Let $H_m$ be obtained from $U_{m,2m}$, whose ground set is partitioned into two $m$-sets $A$ and $B$, by replacing the elements of $A$ by parallel pairs and those of $B$ by coparallel pairs.

\begin{prop}\label{prop:mixed-uniform}
Setting $\eta=2\log_2(9/8)$, we have
\[
w(\mathcal Z(H_m))=\binom{m}{\lfloor m/2\rfloor}
\qquad\text{and}\qquad
\Phi(H_m)=2^{(\eta+o(1))m}.
\]
Consequently,

\[
\Phi(\mathrm{CW}(k))\geq k^{\eta-o(1)}.
\]

\end{prop}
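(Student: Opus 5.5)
The plan is to identify $H_m$ as the $2$-inflated matroid whose decorated core is $M'=U_{m,2m}$ with $A'=A$ and $B'=B$. I would compute its cyclic-flat lattice with the criterion stated before \cref{rem:cw-core-computation}, and its three Tutte values with the $2$-inflation proposition of \cref{sec:width}. The quotient then becomes a ratio of explicit binomial sums, whose exponential growth rates I would find by a saddle-point (maximal-term) argument.

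\emph{Cyclic flats and width.} A union $\widehat X$ of classes is a cyclic flat exactly when $\operatorname{cl}_{M'}(X)\cap A=X\cap A$ and $r_{M'}(X\setminus\{b\})=r_{M'}(X)$ for every $b\in X\cap B$. I would split according to $|X|$.
\begin{itemize}
\item If $|X|<m$, then $X$ is an independent flat of $U_{m,2m}$. The second condition then forbids elements of $B$, so $X\subsetneq A$, and every proper subset of $A$ qualifies.
\item If $|X|\ge m$, then $\operatorname{cl}(X)=E'$, so the first condition forces $A\subseteq X$. Write $X=A\cup Y$ with $Y\subseteq B$. The second condition holds iff $|X|\ge m+1$, i.e.\ iff $Y\neq\emptyset$.
\end{itemize}
Hence $\mathcal Z(H_m)$ is the proper-subset part of $\mathcal B_m$ (indexed by $A$), placed entirely below a copy of the nonempty subsets of $\mathcal B_m$ (indexed by $B$). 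An antichain lives in one layer, so for $m\ge2$ the width is $\binom{m}{\lfloor m/2\rfloor}$ by Sperner's theorem.

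\emph{Tutte values.} By the first formula of the $2$-inflation proposition, a basis $X$ with $|X\cap A|=i$ has $|B\setminus X|=i$, so
\[
T_{H_m}(1,1)=\sum_{i}\binom mi^2 4^i .
\]
Maximizing $2H(t)+2t$ (logarithms base $2$, $H$ the binary entropy) gives $t=2/3$ and value $2\log_2 3$. Since there are only $m+1$ terms, the maximal term determines the growth, so $T_{H_m}(1,1)=9^{(1+o(1))m}$.

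Next, $H_m^*\cong H_m$, because $U_{m,2m}$ is self-dual and dualizing swaps the parallel and coparallel sides. Hence $T(0,2)=T(2,0)$. For $T(2,0)$ I would use the series-expansion identity (1) from that proof. Deleting $c$ elements of $B$ gives $U_{m,2m-c}$, and $T_{U_{r,n}}(x,0)=\sum_{i=1}^r\binom{n-i-1}{r-i}x^i$ for $n>r$. This yields
\[
T_{H_m}(2,0)=\sum_{c}\binom mc 2^c\sum_i\binom{2m-c-i-1}{m-i}2^i,
\]
together with the boundary term $c=m$, which contributes $2^m\cdot 2^m$. This is a polynomial number of nonnegative terms, so its growth rate is the maximum over $(\gamma,\iota)=(c/m,i/m)$ of
\[
H(\gamma)+\gamma+\iota+(2-\gamma-\iota)H\!\left(\tfrac{1-\iota}{2-\gamma-\iota}\right).
\]
The goal is to show this maximum equals $3$, i.e.\ $T_{H_m}(2,0)=8^{(1+o(1))m}$. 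Then
\[
\Phi(H_m)=\bigl(81/64\bigr)^{(1+o(1))m}=2^{(\eta+o(1))m}.
\]
This two-variable optimization is the main obstacle. I would first solve the stationarity equations to find the interior critical point and check that it gives exactly $3$. As a cross-check I would compute the growth rate independently from the alternating formula $\sum_X(-1)^{r(X)+|X|}3^{|B\setminus X|}$, grouped by $|X\cap A|$ and $|X\cap B|$, or via a generating function in $x$ evaluated at $x=2$. Finally I would verify that the boundary of the domain, including the $c=m$ term with growth rate $4^m$, is dominated.

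\emph{Consequence.} $H_m$ is loop- and coloop-free, and by the width computation $H_m\in\mathrm{CW}(k)$ whenever $\binom m{\lfloor m/2\rfloor}\le k$. Given $k$, take the largest such $m$. Then $m=\log_2k-O(\log\log k)$, and therefore
\[
\Phi(\mathrm{CW}(k))\ge\Phi(H_m)=2^{(\eta+o(1))m}=k^{\eta-o(1)} .
\]
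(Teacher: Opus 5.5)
Your route is the paper's. You take the core $U_{m,2m}$ with $A'=A$, $B'=B$, read the cyclic flats off the whole-class criterion, get $T_{H_m}(1,1)=\sum_i\binom mi^2 4^i$ from basis lifts, and use self-duality for $T(0,2)=T(2,0)$. You expand $T_{H_m}(2,0)=\sum_c\binom mc 2^c\,T_{U_{m,2m-c}}(2,0)$ by the series identity, and the deduction for $\mathrm{CW}(k)$ is the same.

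\textbf{The missing step.} You never establish $T_{H_m}(2,0)=8^{(1+o(1))m}$, the step you call the main obstacle. This is half of the asymptotic claim, so as written the proposal does not yet prove $\Phi(H_m)=2^{(\eta+o(1))m}$. No optimization is needed.
\begin{itemize}
\item Upper bound: nonnegativity of the Tutte coefficients gives $T_{U_{m,2m-c}}(2,0)\le T_{U_{m,2m-c}}(2,2)=2^{2m-c}$. Hence $T_{H_m}(2,0)\le\sum_c\binom mc 2^c 2^{2m-c}=8^m$.
\item Lower bound: the single term $c=i=\lfloor m/2\rfloor$ of your double sum is already $8^m/\mathrm{poly}(m)$.
\end{itemize}
This is exactly the paper's sandwich. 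The paper writes $T_{U_{m,2m-c}}(2,0)=2\sum_{i=0}^{m-1}\binom{2m-c-1}{i}\le 2^{2m-c}$ and uses binomial concentration at $c=\lfloor m/2\rfloor$.

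If you prefer to finish your own computation, note that your exponent is jointly concave in $(\gamma,\iota)$. It is a sum of affine terms, $H(\gamma)$, and a perspective of $H$ composed with an affine map. Its stationary point is $(\gamma,\iota)=(1/2,1/2)$, with value $3$, and concavity makes this the global maximum. So neither a boundary analysis nor the alternating-sum cross-check is required.

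\textbf{A small slip in the cyclic flats.} When $Y=\emptyset$, the condition on $b\in X\cap B$ is vacuous, so $X=A$ is admissible. Thus $\widehat A$ is a cyclic flat of rank $m$, contrary to your claim that the condition holds iff $Y\neq\emptyset$. The lattice is therefore two full copies of $\mathcal B_m$ glued top-to-bottom at $\widehat A$, as the paper states. Since $\widehat A$ is comparable with every other cyclic flat, your width computation $\binom m{\lfloor m/2\rfloor}$ is unaffected.
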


\begin{proof}
The cyclic-flat lattice of $H_m$ consists of two copies of
$\mathcal B_m$, with the top of the first identified with the bottom of
the second. Hence

\[
w(\mathcal Z(H_m))
=\binom{m}{\lfloor m/2\rfloor}
=2^{m-o(m)}.
\]

If a basis of $U_{m,2m}$ contains $i$ elements of $A$, it gives rise to
$4^i$ bases of $H_m$. Thus

\[
T_{H_m}(1,1)
=\sum_{i=0}^m\binom mi^2 4^i
=2^{(\log_2 9+o(1))m},
\]

by Stirling's formula. Moreover, series extension on the elements of
$B$ gives

\[
\begin{aligned}
T_{H_m}(2,0)
&=\sum_{j=0}^m\binom mj2^j T_{U_{m,2m-j}}(2,0)\\
&=2\sum_{j=0}^m\binom mj2^j
  \sum_{i=0}^{m-1}\binom{2m-j-1}{i}
 =2^{(3+o(1))m}.
\end{aligned}
\]

Indeed, the upper bound follows by replacing the inner sum by
$2^{2m-j-1}$, while for $j=\lfloor m/2\rfloor$ the inner sum is
$2^{2m-j-1}(1-o(1))$ by binomial concentration, giving the matching lower bound. Since $H_m$ is self-dual, the
same estimate holds for $T_{H_m}(0,2)$. Therefore, $\Phi(H_m)
=2^{(2\log_2 9-6+o(1))m}
=2^{(\eta+o(1))m}$. 
Together with $w(\mathcal Z(H_m))=2^{m-o(m)}$, this yields the stated bound along the sequence $k=w(\mathcal Z(H_m))$. Since consecutive terms of this sequence differ by only a bounded factor and $\mathrm{CW}(k)$ is increasing in $k$, the same asymptotic bound holds for all $k$.

\end{proof}

The lower bounds for $\Phi(p)$ and $\Phi(\mathrm{CH}(k))$ can likewise
be sharpened by optimizing the rank in the original counterexample family.

\begin{prop}\label{prop:optimized-uniform}
Setting $\delta=\log_2\left(\frac{36+16\sqrt2}{49}\right)$, we have
\[
\Phi(p)\geq2^{(\delta-o(1))p}
\qquad\text{and}\qquad
\Phi(\mathrm{CH}(k))\geq2^{(\delta-o(1))k}.
\]
\end{prop}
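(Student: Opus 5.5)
The plan is to run the original family $U^{(2)}_{r,n}$ of~\cite{BCCP24false}, but with the rank ratio $\alpha=r/n$ left free, and then optimize $\alpha$ for the parameter that matters. For $M=U^{(2)}_{r,n}$ we have $N=U_{r,n}$ and $B'=\emptyset$, so \cref{lem:subposet} gives $\mathcal Z(M)\cong\mathcal F(U_{r,n})$. This lattice consists of all sets of size $<r$ together with $\hat 1$, so its height is $r+1$. Hence $M\in\mathrm{CH}(r+1)$, and also $M\in\mathcal C(C_{r+2})$ with $|C_{r+2}|=r+2$. The constraint therefore costs only $p,k\approx r=\alpha n$. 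The required growth rate is then $\delta=\max_\alpha g(\alpha)/\alpha$, where $\Phi(U^{(2)}_{\alpha n,n})=2^{(g(\alpha)+o(1))n}$.

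To compute $g$, I would use the thickening identity~\eqref{eq:thickening-tutte}, which gives
\[
T_M(1,1)=2^rT_N(1,1),\qquad T_M(2,0)=T_N(2,0),\qquad T_M(0,2)=3^rT_N(2/3,4).
\]
Here $T_N(1,1)=\binom nr$. For the $(2,0)$ term, the subset expansion gives $T_N(2,0)=\sum_{i<r}\binom{n-1}{i}$, which is $2^{n-o(n)}$ whenever $\alpha>1/2$. For the $(0,2)$ term, the subset expansion gives
\[
T_N(2/3,4)=\sum_X(-1/3)^{r-r(X)}3^{|X|-r(X)}.
\]
Its main part is $\sum_{k\ge r}\binom nk3^{k-r}$. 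The terms with $|X|<r$ contribute at most $2^n$, which is negligible. When $\alpha<3/4$ the main part is $3^{-r}4^{n}2^{-o(n)}$, with upper bound $3^{-r}4^n$, because binomial concentration of $\binom nk3^k$ at $k=3n/4\ge r$ applies. Hence $T_M(0,2)=4^{n+o(n)}$. Combining the three evaluations with Stirling's formula, for $1/2<\alpha<3/4$,
\[
g(\alpha)=2\alpha+2H(\alpha)-3,
\]
where $H$ is the binary entropy. As a check, at $\alpha=2/3$ this is positive, in agreement with~\cite[Lemma~2.4]{BCCP24false}.

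Next I would maximize $f(\alpha)=g(\alpha)/\alpha$. Setting $f'=0$ gives $\alpha g'(\alpha)=g(\alpha)$. A short simplification reduces this to $2\log_2(1-\alpha)+3=0$, that is, $\alpha^*=1-2^{-3/2}\approx0.646$, which lies in the admissible window $(1/2,3/4)$. At $\alpha^*$,
\[
f=g'(\alpha^*)=\log_2\frac{4(1-\alpha^*)^2}{(\alpha^*)^2}=\log_2\frac{4}{(2\sqrt2-1)^2}=\log_2\frac{36+16\sqrt2}{49}=\delta.
\]
Finally, for given large $p$ or $k$, I would take $r=p-2$ (respectively $r=k-1$) and $n=\lfloor r/\alpha^*\rfloor$. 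Then $M\in\mathcal C(C_p)$ (respectively $\mathrm{CH}(k)$), and $\Phi(M)\ge2^{(\delta-o(1))p}$. Since $\Phi(p)$ is a maximum over posets of size $p$, taking $P=C_p$ gives the first bound, and the second is immediate.

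The main obstacle I expect is making the $(0,2)$ asymptotics rigorous despite the alternating signs $(-1/3)^{r-r(X)}$. I would split the sum at $|X|=r$: every $X$ with $|X|\ge r$ has $r(X)=r$ and so contributes a positive term $3^{|X|-r}$, while the sets with $|X|<r$ contribute in absolute value at most $2^n=o(4^n3^{-r})$. The same care is needed to keep $\alpha^*$ strictly inside $(1/2,3/4)$, so that both concentration statements hold with only $o(n)$ losses in the exponent.
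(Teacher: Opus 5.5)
Your overall plan matches the paper's proof. You use the same family $U^{(2)}_{r,n}$ with $r/n\to\alpha$, the same height count $r+1$, the same maximization of $f(\alpha)/\alpha$ at $\alpha^*=1-2^{-3/2}$, and the same choices $r=p-2$ and $r=k-1$. The one difference is that you derive the growth rate of $\Phi(U^{(2)}_{r,n})$ yourself from \eqref{eq:thickening-tutte} instead of citing \cite[Lemma~2.4]{BCCP24false}. That derivation has a step that fails exactly where you need it.

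\textbf{The gap.} In the $(0,2)$ estimate you bound the total contribution of the sets with $|X|<r$ to $T_N(2/3,4)$ by $2^n$, and you assert $2^n=o(4^n3^{-r})$. This holds only when $3^r=o(2^n)$, i.e.\ for $\alpha<\log_3 2\approx0.631$. But $\alpha^*\approx0.646$. At $\alpha^*$ one has $3^r2^n=2^{(2.024\ldots)n}$. So after multiplying by $3^r$, your estimate gives only $T_M(0,2)\le 2^{(2.024\ldots+o(1))n}$, not $4^{n+o(n)}$. The exponent of $\Phi$ then drops by about $0.025n$, and the argument as written does not reach $\delta$.

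\textbf{The repair.} A set with $|X|<r$ contributes $(-1/3)^{r-|X|}$, whose absolute value is $3^{-(r-|X|)}$, not $1$. These terms therefore total at most $3^{-r}\sum_{k<r}\binom nk 3^k$. For $\alpha<3/4$ this is exponentially smaller than $3^{-r}4^n$.

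\textbf{A simpler route.} For a lower bound on $\Phi$ you only need upper bounds on the denominator, and these are trivial. Every term of the subset expansion at $(2,0)$ or at $(0,2)$ is $\pm1$, so $T_M(2,0)=T_N(2,0)\le 2^n$ and $T_M(0,2)\le 2^{|E(M)|}=4^n$. Combined with $T_M(1,1)=2^r\binom nr$, this gives $\Phi(U^{(2)}_{r,n})\ge 4^r\binom nr^2/8^n=2^{(2\alpha+2H(\alpha)-3)n-O(\log n)}$ for every $r$. No concentration argument and no restriction to $1/2<\alpha<3/4$ are needed. Your route then becomes more self-contained than the paper's, which relies on the cited asymptotic equality.

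A minor slip: $T_{U_{r,n}}(2,0)=2\sum_{i<r}\binom{n-1}{i}$, so your formula is missing a factor of $2$. This is harmless for the asymptotics.
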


\begin{proof}
For $1/2<\alpha<3/4$, let $M_n=U^{(2)}_{r,n}$, where $r=\lceil\alpha n\rceil$. By
\cite[Lemma~2.4]{BCCP24false}, at $x=2$,

\[
\Phi(M_n)=2^{(f(\alpha)+o(1))n},
\qquad
f(\alpha)=2\alpha+2H_2(\alpha)-3,
\]

where $H_2$ is the binary entropy function. Moreover,

\[
\mathcal Z(M_n)\cong
\{X\subseteq[n]:|X|\leq r-1\}\cup\{\hat1\},
\]

so its height is $r+1$ and it excludes $C_{r+2}$. Thus the exponential
rate with respect to either the forbidden-poset order or cyclic height
is $f(\alpha)/\alpha$.

Since

\[
\left(\frac{f(\alpha)}{\alpha}\right)'
=\frac{\log_2(8(1-\alpha)^2)}{\alpha^2},
\]

this ratio is maximized at
$\alpha=\alpha_*:=1-\frac1{2\sqrt2}$. At this value,

\[
\frac{f(\alpha_*)}{\alpha_*}
=\log_2\left(\frac{36+16\sqrt2}{49}\right)
=\delta.
\]

For the first assertion, for each $p$ take $r=p-2$ and choose an integer $n$ with $r/n\to\alpha_*$ as $p\to\infty$; then $M_n\in\mathcal C(C_p)$. For the second, take $r=k-1$ and choose $n$ with $r/n\to\alpha_*$ as $k\to\infty$; then $M_n\in\mathrm{CH}(k)$. This proves both assertions.
\end{proof}

Hence, with
\[
\eta=2\log_2(9/8)=0.339850\ldots,
\qquad
\delta=\log_2\left(\frac{36+16\sqrt2}{49}\right)=0.258793\ldots,
\]
we have
\[
2^{(\delta-o(1))p}\leq\Phi(p)\leq2^{4p-4},
\]

\[
2^{(\delta-o(1))k}\leq\Phi(\mathrm{CH}(k))\leq2^{4k-4},
\]

and

\[
k^{\eta-o(1)}
\leq\Phi(\mathrm{CW}(k))
\leq O\bigl(k^4(\log k)^2\bigr).
\]

\begin{quest}
What are the exact asymptotics in these three cases?
\end{quest}

\subsubsection*{Small values}
The exact formulas of Beke, Cs\'aji, Csikv\'ari, and Pituk also give the slightly smaller counterexample $U^{(2)}_{21,31}$ on $62$ elements, with $\Phi(U^{(2)}_{21,31})=1.0740\ldots>1$; compare~\cite{BCCP24false}, where $U^{(2)}_{22,33}$ is given as the smallest example known there. Its cyclic-flat lattice has height $22$. Consequently, our results and this example leave the following ranges open.

\begin{quest}
What is the smallest \(k\) for which the multiplicative Merino--Welsh inequality fails for some matroid in \(\mathrm{CH}(k)\)?
\end{quest}
The inequality holds for $k\leq6$ and fails already in $\mathrm{CH}(22)$. Thus the smallest such $k$ lies in the range $7\leq k\leq22$.

\begin{quest}
What is the smallest \(k\) for which the multiplicative Merino--Welsh inequality fails for some \(k\)-paving matroid? 
\end{quest}
The inequality holds for every $k$-paving matroid with $k\leq4$, whereas $U^{(2)}_{21,31}$ has rank $21$ and girth $2$, so it is $20$-paving and is a counterexample. Thus the smallest such $k$ lies in the range $5\leq k\leq20$.

\begin{quest}
What is the smallest \(k\) for which the multiplicative Merino--Welsh inequality fails for some matroid in \(\mathrm{CW}(k)\)?
\end{quest}

The inequality holds for $k\leq7$. Using the formulas in~\cref{prop:mixed-uniform}, exact evaluation gives $\Phi(H_{16})=1.0928\ldots>1$, while $w(\mathcal Z(H_{16}))=\binom{16}{8}=12870$. Thus the smallest such $k$ lies in the range $8\leq k\leq12870$.

\subsection*{Statement on the use of artificial intelligence}

Generative AI tools, primarily ChatGPT by OpenAI (including GPT-5.6 Sol), were used during the development of this work as interactive research assistants. The mathematical framework of the paper, including the choice of cyclic flats as the structural parameter, the induced-poset approach, the decorated-core decomposition and boundary matroids, as well as the main qualitative ideas and proof strategies, were developed by the authors previous to that.

AI assistance was used primarily to explore and push quantitative aspects of these ideas further. In particular, it contributed to the search for the strengthening of the initial reduction from classes of size at most three to the reduction to $2$-inflated matroids proved in~\cref{lem:inductionbasis}; to the optimization and sharpening of the quantitative and asymptotic bounds in~\Cref{cor:asymptotic-bounds,prop:mixed-uniform} and \Cref{prop:optimized-uniform}; and to the systematic exploration of the finite parameter ranges that extended the initial results for $\mathrm{CW}(4)$ and $\mathrm{CH}(4)$ to the present results~\cref{thm:cyclicwidthseven,thm:cyclicheightsix}. 

All statements, proofs, computations, and references appearing in the paper were subsequently checked and validated by the authors, who take full responsibility for the contents of the manuscript.

\section*{Acknowledgments}
The first author was supported by UNAM through the PREI-DGAPA. This work was supported by UNAM-PAPIIT IN119026 and in part by the grant PID2022-137283NB-C22 funded by MICIU/AEI/10.13039/\allowbreak 501100011033 and by ERDF/EU and through the Severo Ochoa and Mar\'ia de Maeztu Program for Centers and Units of Excellence in R\&D (CEX2020-001084-M).

\small
\bibliography{mwbib}
\bibliographystyle{siam}

\end{document}